\newenvironment{packed_enum}{
\begin{enumerate}
  \setlength{\itemsep}{1pt}
  \setlength{\parskip}{0pt}
  \setlength{\parsep}{0pt}
}{\end{enumerate}}
\newenvironment{packed_itemize}{
\begin{itemize}
  \setlength{\itemsep}{1pt}
  \setlength{\parskip}{0pt}
  \setlength{\parsep}{0pt}
}{\end{itemize}}
\newcommand{\leqnomode}{\tagsleft@true}
\newcommand{\reqnomode}{\tagsleft@false}
\newtheorem{lemma}{Lemma} [section]
\newtheorem{theorem}{} [section]
\newtheorem{question}{Question}[section]
\newcommand{\sset}[1]{\left\{#1\right\}}
\newcommand{\sat}[1]{\textsc{$#1$-Sat}}
\newcommand{\col}[1]{\textsc{$#1$-Coloring}}
\def\longbox#1{\parbox{0.85\textwidth}{#1}}
\title{List-three-coloring graphs with no induced $P_6+rP_3$}
\author{
  Maria Chudnovsky\thanks{Supported by NSF grant DMS-1550991.
This material is based upon work supported in part by the U. S. Army  Research 
Laboratory and the U. S. Army Research Office under    grant number 
W911NF-16-1-0404.}\\
Princeton University, Princeton, NJ 08544
\\
\\
Shenwei Huang\\
Wilfrid Laurier University, Waterloo, ON, Canada
\\
\\
Sophie Spirkl\\
Princeton University, Princeton, NJ 08544
\\
\\
Mingxian Zhong\\
Columbia University, New York, NY 10027}
\date{\today}
\begin{document}
\maketitle

\begin{abstract} 
  For an integer $r$, the graph $P_6+rP_3$ has $r+1$ components, one of which is a path on $6$ vertices, and each of the others is a path on $3$ vertices. In
  this paper we provide a polynomial-time algorithm to test if, for fixed $r$,
  a graph  with no induced subgraph isomorphic to $P_6+rP_3$ is
  three-colorable, and find a coloring if one exists. We also
  solve the list version of this problem, where each vertex is assigned a list
  of possible colors, which is a subset of $\{1,2,3\}$.
\end{abstract}

\section{Introduction} \label{sec:intro}

All graphs in this paper are finite and simple.
We use $[k]$ to denote the set $\sset{1, \dots, k}$. Let $G$ be a graph. A
\emph{$k$-coloring} of $G$ is a function
$f:V(G) \rightarrow [k]$ such that for every edge 
$uv \in E(G)$,
$f(u) \neq f(v)$, and $G$ is \emph{$k$-colorable} if $G$ has a
 $k$-coloring. The \textsc{$k$-coloring problem} is the problem of
deciding, given a graph $G$, if $G$ is $k$-colorable. This
problem is well-known to be $NP$-hard for all $k \geq 3$.

A function $L: V(G) \rightarrow 2^{[k]}$ that assigns a subset of
$[k]$ to each vertex of a graph $G$ is a \emph{$k$-list assignment}
for $G$. For a $k$-list assignment $L$, a function
$f: V(G) \rightarrow [k]$ is a 
\emph{coloring of $(G,L)$} if $f$ is a
$k$-coloring of $G$ and $f(v) \in L(v)$ for all $v \in V(G)$. 
We say that a graph $G$ is  \emph{$L$-colorable},
and that the pair $(G,L)$ is {\em colorable},  if $(G,L)$ has a coloring.
The \textsc{$k$-list coloring problem} is the problem of deciding, given a
graph $G$ and a $k$-list assignment $L$, if $(G,L)$ is 
colorable. Since this generalizes the $k$-coloring problem, it is
also $NP$-hard for all $k \geq 3$.

Let $G$ be a graph, and let $X \subseteq V(G)$. We denote by $G|X$ the subgraph
of $G$ induced by $X$. For a list assignment $L$ for $G$,
a \emph{precoloring} $(G, L, X, f)$ of $(G,L)$ is
a function $f: X \rightarrow \mathbb{N}$ for a set $X \subseteq V(G)$ such
that $f(v) \in L(v)$ for every $v \in X$, and $f$
is a $k$-coloring of $G|X$.
A \emph{precoloring extension} for $(G, L, X, f)$ is a
$k$-coloring $g$ of $(G,L)$ such that $g|_X = f|_X$.
The \textsc{precoloring extension problem} is the problem of deciding
if a given precoloring $(G,L,X,f)$ of $(G,L)$ extends to a coloring of
$(G,L)$.

We denote by $P_t$ the path with $t$ vertices.  
Given a path $P$, its \emph{interior} is the set of vertices that
have degree two in $P$. 
A  \emph{$P_t$ in a graph  $G$} is  a sequence $v_1-\ldots -v_t$ of pairwise 
distinct vertices where for $i,j \in [t]$, $v_i$ is adjacent to $v_j$ if and 
only if $|i-j|=1$.  We denote by $V(P)$ the set $\{v_1, \ldots, v_t\}$,
and if $a, b \in V(P)$, say $a=v_i$ and $b=v_j$ and $i<j$, then $a-P-b$ is the 
path $v_i-v_{i+1}-\ldots - v_j$. We denote by $P_6+rP_3$ the graph
with $r+1$ components, one of which is a $P_6$, and each of the others is a
$P_3$.

For two graphs $H,G$ we say that $G$ is {\em $H$-free} if no induced
subgraph of $G$ is isomorphic to $H$. In this paper, we use the terms
``polynomial time'' and ``polynomial size'' to mean ``polynomial in
$|V(G)|$'', where $G$ is the input graph.
Since the  \textsc{$k$-coloring problem} and the \textsc{$k$-precoloring extension problem} are $NP$-hard for $k \geq 3$, their
restrictions to $H$-free graphs, for various $H$,  have been extensively 
studied. In particular, the following is known:

\begin{theorem}[\cite{gps}]
\label{linearforests}
  Let $H$ be a (fixed) graph, and let $k>2$. If
the \textsc{$k$-coloring problem} can be solved in polynomial time when restricted to the class of $H$-free graphs, then every connected component of $H$ is a path.
\end{theorem}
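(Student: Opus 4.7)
I would prove the contrapositive: assuming some connected component $H_0$ of $H$ is not a path, I show that the $k$-coloring problem restricted to $H$-free graphs is NP-hard. A connected graph is a path if and only if it is a tree of maximum degree at most two, so $H_0$ must either contain a cycle or have a vertex of degree at least three; these two cases can be handled separately.

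\textbf{Cycle case.} Suppose $H_0$ contains a cycle, necessarily of length at most $|V(H)|$. Then any graph of girth strictly greater than $|V(H)|$ cannot contain $H$ as an induced subgraph, since an induced copy of $H$ would include that short cycle. It therefore suffices to invoke the known theorem (Emden-Weinert, Hougardy, Kreuter) that for every fixed $k \geq 3$ and every fixed $g \geq 3$, the $k$-coloring problem on graphs of girth at least $g$ is NP-hard.

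\textbf{Subdivided-claw case.} In the remaining case $H$ is a forest, and $H_0$ is a tree containing some vertex $v$ of degree at least three. Choosing three pairwise internally disjoint paths in $H_0$ from $v$ to leaves produces an induced subdivided claw $S_{a,b,c}$ (a spider with three legs of lengths $a, b, c$) inside $H_0$ with $a, b, c \geq 1$. Thus every $S_{a,b,c}$-free graph is $H_0$-free, and therefore $H$-free, so it suffices to establish NP-hardness of $k$-coloring on $S_{a,b,c}$-free graphs. I would do this by a reduction from $k$-coloring on general graphs: replace each edge of the input graph $G$ by an ``edge gadget'' which is a small $k$-colorable graph with two distinguished port vertices that must receive different colors in any proper $k$-coloring, chosen so that the global construction contains no induced $S_{a,b,c}$. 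Internally, any gadget can be made long enough so that no induced $S_{a,b,c}$ lies within a single gadget; the remaining risk is an induced subdivided claw whose branch vertex is an original vertex of $G$, which is ruled out by designing the gadget so that its legs are forced to carry an obstruction (for instance, a chord or common vertex) before a long enough induced path can be extracted.

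\textbf{Main obstacle.} The cycle case follows essentially for free from the cited girth result. The real work is designing the edge gadget in the subdivided-claw case: it must simultaneously be rigid enough to simulate the ``different colors'' constraint of a single edge, yet locally path-like enough to avoid creating any induced $S_{a,b,c}$ at the original vertices of $G$ where many gadgets meet. Balancing these two tensions, while keeping the reduction polynomial in $|V(G)|$, is where the bulk of the argument lies.
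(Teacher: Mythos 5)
This theorem is not proved in the paper at all: it is quoted verbatim from Golovach, Paulusma and Song \cite{gps}, so your attempt has to be measured against the standard argument from that literature. Your skeleton --- prove the contrapositive, then split according to whether the non-path component $H_0$ contains a cycle or a vertex of degree at least three --- is exactly the standard one, and your cycle case is complete as sketched: a graph of girth larger than $|V(H)|$ contains no cycle of length at most $|V(H)|$ and hence no induced copy of $H$, and $k$-coloring is NP-hard on graphs of arbitrarily large girth for every fixed $k\geq 3$ (Emden-Weinert, Hougardy and Kreuter; also Kami\'{n}ski and Lozin).

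The degree-three case, however, has a genuine gap as written: in your setup the entire content of that case is the edge gadget, and you never construct it. You need a $k$-colorable gadget that forces its two ports to receive different colors \emph{and} such that the global graph obtained by substituting it into an arbitrary input graph is $S_{a,b,c}$-free; checking the latter where many gadgets meet at a high-degree original vertex is precisely the delicate point, and the remark that the legs are ``forced to carry an obstruction'' is a placeholder, not an argument. The detour is also unnecessary. Since $H_0$ is a tree with a vertex $v$ of degree at least three, the three neighbours of $v$ are pairwise non-adjacent, so $H_0$ already contains an induced claw $K_{1,3}$; consequently every claw-free graph --- in particular every line graph --- is $H$-free, and NP-hardness follows immediately from the NP-hardness of $k$-edge-coloring (Holyer for $k=3$, Leven and Galil for $k\geq 3$), i.e.\ of $k$-coloring line graphs. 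This is how the cited proof disposes of the degree-three case; your subdivided-claw generality buys nothing beyond what the claw gives, while leaving the one construction your argument actually rests on unproved.
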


In this paper we focus on the case when $k=3$.  In this case, the converse of
\ref{linearforests} may be true (it is known to be false for $k>4$), since the
following question is still open:

\begin{question} Is it true that for every (fixed) integer $t>0$,
  the \textsc{ $3$-coloring problem} can be solved
  in polynomial time when restricted to the class of $P_t$-free graphs?
\end{question}

Several positive results are known in this direction:

\begin{theorem}[\cite{c1}] \label{3colP7}
The \textsc{list-3-coloring problem} can be
  solved in polynomial time for the class of $P_7$-free graphs.
\end{theorem}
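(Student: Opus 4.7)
The plan is to reduce the \textsc{list-$3$-coloring} problem on $P_7$-free graphs to polynomially many instances of \textsc{list-$2$-coloring}, which is solvable in polynomial time via the standard reduction to \sat{2}. The algorithm has three phases: preprocessing and enumeration of a small dominating structure, list propagation, and a final \sat{2} step. I would begin with standard preprocessing: whenever a vertex has a singleton list, remove that color from the lists of its neighbors, and return ``no'' if any list becomes empty; handle connected components separately, and dispose of the case when $G$ is $P_6$-free using known results for shorter forbidden paths. So we may assume $G$ contains an induced $P_6$, say $P = p_1\text{-}p_2\text{-}p_3\text{-}p_4\text{-}p_5\text{-}p_6$.

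In the enumeration phase there are $O(|V(G)|^6)$ choices for such a $P$ and at most $3^6$ precolorings of $V(P)$ consistent with $L$. For each choice, update lists by removing from $L(v)$ every color $f(u)$ with $u \in N(v) \cap V(P)$. The crucial structural point is that $P_7$-freeness strongly restricts how the rest of the graph attaches to $P$: no induced path on more than six vertices may be created, so connected components of $G \setminus N[V(P)]$ have bounded, controlled interaction with the interior of $P$ and are anticomplete to the endpoints. Identify the set of \emph{troublesome} vertices---those still having list size $3$ after the update---and show, using $P_7$-freeness, that they can be covered by a further constant-sized probe set. Guessing a precoloring of this probe set (polynomially many possibilities) and propagating should force every remaining list to have size at most $2$.

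In the final phase, once every vertex has a list of size at most $2$, \textsc{list-$3$-coloring} reduces to \sat{2}: for each vertex $v$ with $|L(v)|=2$ introduce a Boolean variable indicating which of the two colors is used; for each edge with both endpoints of list size $2$ add clauses forbidding conflicting assignments; and propagate singletons as unit clauses. Solving the resulting \sat{2} instance in polynomial time and translating a satisfying assignment back into a coloring completes the algorithm.

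The principal obstacle is the enumeration phase: proving that after guessing the coloring of a single induced $P_6$ together with a polynomial number of additional probe vertices, every remaining vertex has list size at most $2$. This demands a careful case analysis of how connected components of $G \setminus V(P)$ can attach to $P$ without creating an induced $P_7$, and it is precisely here that the bound $t = 7$---rather than an arbitrary $t$---is essential. I expect this structural step to carry the bulk of the technical work, with the preprocessing and the \sat{2} step being comparatively routine.
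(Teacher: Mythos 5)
This statement is not proved in the paper at all: it is Theorem \ref{3colP7}, quoted from reference [c1] (Bonomo, Chudnovsky, Maceli, Schaudt, Stein and Zhong) and used as a black box --- it supplies the base case $r=0$ of the induction in the proof of \ref{main1} and is also invoked there to handle easy precolorings. So there is no internal proof to compare against, and your proposal has to stand on its own; it does not.

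The gap is the step you yourself flag as the ``principal obstacle'': the claim that, after guessing a coloring of one induced $P_6$ and updating, the vertices that still have lists of size $3$ can be covered by a constant-sized probe set whose coloring forces every list down to size at most $2$. That claim is the entire content of the theorem, no argument is given for it, and it is not true in this simple form. A vertex retains a full list exactly when it is anticomplete to the guessed path (and to every precolored vertex), and in a $P_7$-free graph the set of such vertices can be large and structurally rich --- $P_7$-freeness does not make $G \setminus N[V(P)]$ trivial, and these vertices interact with the rest of the graph only through vertices whose lists you have not pinned down, so coloring constantly many probes cannot shrink their lists. Compare with the present paper: for $(P_6+rP_3)$-free graphs the analogous ``wilderness'' is $P_3$-free, hence by \ref{W} a disjoint union of cliques of size at most three, and even with that enormous simplification the authors still need the full machinery of Sections \ref{sec:generaltonice}--\ref{sec:stableto2SAT} (seeded precolorings, the hypergraph $\nu$/$\tau$ characteristics built on \ref{nutaulambda} and \ref{nutau}, the nice/easy/stable reductions, and the monochromatic-constraint variant of two-list coloring, \ref{Mono}, rather than plain \sat{2}) to reach a tractable instance. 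The actual proof of \ref{3colP7} in [c1] is likewise a long, multi-stage structural argument; it does not follow from ``guess a $P_6$ plus a bounded probe set, then run \sat{2}.'' Your preprocessing and the final two-list/\sat{2} phase are fine (that part is \ref{Edwards}), but the middle of the argument is missing, and it is precisely where all the difficulty lives.
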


\begin{theorem}[\cite{PrPspaper}] \label{PrPs}
The \textsc{list-3-coloring problem} can be
solved in polynomial time for the class of $P_5+P_2$-free graphs, and for
the class of $P_4+P_3$-free graphs.
\end{theorem}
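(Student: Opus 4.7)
The plan is to handle both statements via one template: reduce $(G,L)$ to a polynomially bounded family of derived instances in which every list has size at most two, and then invoke the classical reduction of list-$3$-coloring with lists of size at most two to \sat{2}. The structural input is that whenever $P$ is an induced $P_5$ (respectively, $P_4$) in a $P_5+P_2$-free (respectively, $P_4+P_3$-free) graph $G$, the set $Y$ of vertices anticomplete to $V(P)$ induces a graph containing no induced $P_2$ (respectively, no induced $P_3$); so $G|Y$ is edgeless in the first case and a disjoint union of cliques in the second.

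First I would dispose of the cases where no such $P$ exists. If $G$ is $P_5+P_2$-free and has no induced $P_5$, then $G$ is $P_5$-free, so Theorem~\ref{3colP7} applies. If $G$ is $P_4+P_3$-free and has no induced $P_4$, then $G$ is a cograph, and list-$3$-coloring on cographs is polynomial by standard recursion on the cotree. Assume henceforth that an induced $P\in\{P_5,P_4\}$ exists; fix one, and enumerate the at most $3^{|V(P)|}$ colorings $f_P$ of $V(P)$ consistent with $L$. For each $f_P$, propagate: for every $v\in N(V(P))$, remove from $L(v)$ the colors $f_P$ assigns to neighbors of $v$ in $V(P)$, discarding branches in which some list becomes empty.

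The residual instance now lives on $Y$ and $M:=V(G)\setminus(V(P)\cup Y)$. In the $P_5+P_2$-free setting $Y$ is an independent set, so its vertices can be appended directly to the eventual \sat{2} instance once $L$ is reduced on $N(Y)$. In the $P_4+P_3$-free setting the components of $G|Y$ are cliques: a clique of size at least three must use all three colors (so either it admits a unique valid coloring from $L$ or the branch dies), and for a clique of size at most two, branching on the color of one vertex of the clique leaves the remaining lists of size at most two. In both cases, after this step only the lists on $M$ remain to be controlled.

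The main obstacle is the set $M' \subseteq M$ of vertices whose lists are still of size three after propagation from $V(P)$ and $Y$; these are typically vertices with just one neighbor in $V(P)$ and no constraining neighbor elsewhere. To keep the branching polynomial, I would classify the vertices of $M$ by their neighborhood pattern on $V(P)$ and use the exclusion of $P_5+P_2$ (respectively, $P_4+P_3$) to show that only a bounded number of such patterns is realizable, and that within each pattern a small ``pivot'' set exists whose coloring determines the coloring of every other vertex of that pattern up to lists of size at most two. Carrying out this attachment/typing analysis — enumerating the allowed patterns and exhibiting, for each, a pivot set of constant size — is where essentially all of the technical effort is expected to concentrate.
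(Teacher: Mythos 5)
This statement is not proved in the paper at all: it is quoted from \cite{PrPspaper}, and the proof there is a substantial structural analysis, not a routine reduction. Measured against what an actual proof requires, your sketch has genuine gaps. First, your ``main obstacle'' is misplaced: by definition every vertex of $M=V(G)\setminus(V(P)\cup Y)$ has a neighbor in $V(P)$, and you enumerate a full coloring $f_P$ of $V(P)$, so after propagation every vertex of $M$ already has a list of size at most two; your set $M'$ is empty. The real difficulty sits in $Y$: a vertex $y\in Y$ with $|L(y)|=3$ has no colored neighbor, and it cannot be ``appended directly to the eventual \sat{2} instance'' --- the constraint it imposes is that $N(y)$ must avoid using all three colors, which is not a $2$-clause and is not handled by the Edwards-type reduction (\ref{Edwards}). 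Taming exactly these full-list vertices is where all known arguments spend their effort (in this paper it is the whole point of the wilderness $W(P)$, the seagull/hypergraph machinery of \ref{nutau}, and Stacho's monochromatic strengthening \ref{Mono}); your sketch simply waves it away.

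Second, the $P_4+P_3$ case as described does not work: a clique of size three in $Y$ with large lists does not admit a unique valid coloring (a triangle with full lists has six), and ``branching on the color of one vertex'' of each clique of size at most two compounds over possibly $\Omega(|V(G)|)$ disjoint cliques of $Y$, giving exponentially many branches rather than a polynomial family of reduced instances. Third, the step you acknowledge as carrying ``essentially all of the technical effort'' --- that within each neighborhood pattern on $V(P)$ there is a constant-size pivot set whose coloring reduces all other lists to size at most two --- is exactly the content of the theorem and is asserted without any argument; the bounded number of patterns ($2^{|V(P)|}$) is trivial and gives nothing by itself, and nothing in the $(P_5+P_2)$- or $(P_4+P_3)$-freeness hypothesis obviously produces such pivots. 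As it stands the proposal is an outline of intent, not a proof, and the route it gestures at is considerably weaker than the actual argument in \cite{PrPspaper}.
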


\begin{theorem}[\cite{kP3paper}] \label{kP3}
Let $H$ be a graph each of whose components is a $P_3$. 
  The \textsc{3-coloring problem} can be
  solved in polynomial time for the class of $H$-free graphs.
  \end{theorem}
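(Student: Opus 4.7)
The plan is to find a small ``core'' $S \subseteq V(G)$ of constant size such that the graph induced on $C := V(G) \setminus (S \cup N(S))$ is structurally very simple (a disjoint union of cliques), enumerate all $3$-colorings of $S$, and solve each resulting residual list-$3$-coloring instance on $G \setminus S$ in polynomial time. The gain from bringing $S$ into play is that, for each fixed coloring of $S$, every vertex in $N(S) \setminus S$ acquires a list of size at most $2$, and this is the regime in which list-$3$-coloring reduces to $2$-SAT.

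To construct $S$, greedily build a maximal family $\mathcal{T} = \{T_1, \ldots, T_k\}$ of pairwise vertex-disjoint induced $P_3$s in $G$ such that there are no edges between distinct members of $\mathcal{T}$. Since $G$ is $rP_3$-free, $k \leq r-1$, and so $S := V(T_1) \cup \cdots \cup V(T_k)$ has at most $3(r-1)$ vertices. Any induced $P_3$ contained in $C$ would be vertex-disjoint from and non-adjacent to every $T_i$ (because $C$ is, by definition, non-adjacent to $S$), contradicting the maximality of $\mathcal{T}$; hence $G[C]$ is $P_3$-free, and thus a disjoint union of cliques. We may also assume every clique of $G[C]$ has at most three vertices, since otherwise the instance is trivially infeasible. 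Now enumerate all $3^{|S|}$ (constantly many) colorings of $S$: in each branch, every vertex in $N(S) \setminus S$ inherits a list $L(v) \subsetneq \{1,2,3\}$, while vertices of $C$ still carry the full list $\{1,2,3\}$.

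The main obstacle is solving this residual list-$3$-coloring instance in polynomial time. The standard endgame is the reduction from list-$3$-coloring with all lists of size at most $2$ to $2$-SAT, so the technical core is to bring the lists on $C$ down to size $2$. Propagation should come from two sources: edges between $C$ and the already-constrained boundary $N(S) \setminus S$, and the clique structure of $G[C]$ (a $3$-clique in $G[C]$ must use all three colors, which couples the three lists tightly). A further layer of bounded branching, for example over a small hitting set for the cliques of $G[C]$ that have a neighbor in $N(S) \setminus S$, will likely be needed; keeping that branching at polynomial cost is the delicate part of the argument, and is where the $rP_3$-free hypothesis should be invoked a second time to bound the branching depth. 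Once every list has size at most $2$, the $2$-SAT reduction closes the proof.
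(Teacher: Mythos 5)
First, note that \ref{kP3} is a quoted result (from \cite{kP3paper}); this paper gives no proof of it, so the only meaningful comparison is with the machinery this paper develops for its harder variant. Your setup is fine and is essentially the same seeding idea used here: a maximal family of pairwise anticomplete induced $P_3$'s has at most $r-1$ members, so $|S|\leq 3(r-1)$, every vertex of $N(S)\setminus S$ gets a list of size at most $2$ in each of the $3^{|S|}$ branches, and $G|C$ is $P_3$-free, hence a disjoint union of cliques of size at most three (compare \ref{W}, where the same conclusion is drawn for the wilderness of an $r$-seeded precoloring).

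The genuine gap is exactly the step you defer. The residual instance --- boundary vertices with $2$-lists together with wilderness cliques carrying the full list $\{1,2,3\}$ --- does not reduce to $2$-SAT by propagation: already for a singleton $w\in C$ the constraint is ``$N(w)$ misses some color,'' which is not expressible over the $2$-lists of its neighbors, and a clique of size two or three in $C$ only makes this worse. Your proposed fix, branching over ``a small hitting set for the cliques of $G[C]$ that have a neighbor in $N(S)\setminus S$,'' does not work as stated: there can be $\Omega(|V(G)|)$ such cliques, and $rP_3$-freeness does not bound their number (they all attach to $N(S)$, so they never extend your anticomplete family), hence no constant-size hitting set or bounded branching depth exists and the enumeration is exponential. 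Taming precisely this boundary--wilderness interaction is where the real work lies; in this paper it occupies Sections~\ref{sec:generaltonice}--\ref{sec:stableto2SAT}, using the hypergraph bound \ref{nutaulambda}/\ref{nutau} to enumerate bounded-size ``characteristics'' (hitting sets of \emph{boundary} vertices or large flocks of seagulls) of a hypothetical coloring, and the monochromatic-constraint version of the $2$-list theorem (\ref{Mono}) to finish. Some argument of comparable substance (as in \cite{kP3paper}) is needed in your proof; as written, the proposal stops where the proof has to begin.
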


\begin{theorem} [\cite{subexp}]
  The \textsc{list-3-coloring problem} can be
  solved in subexponential time $2^{O(\sqrt{t|V(G)|\log(|V(G)|)})}$ where
  the input graph $G$ is $P_t$-free.
\end{theorem}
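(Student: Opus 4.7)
The plan is to design a branching algorithm that identifies, in any connected $P_t$-free graph on $n$ vertices, a small set $S$ of vertices whose coloring reduces the list-$3$-coloring problem to an easy residual instance; taking $|S| = O(\sqrt{tn \log n})$ then yields the claimed bound by branching over all at most $3^{|S|}$ colorings of $S$ and doing polynomial work per branch. The starting observation is that in any connected $P_t$-free graph every shortest path is induced, so the diameter is at most $t-2$, and hence a BFS from any root $v$ produces at most $t-1$ nonempty layers $L_0, L_1, \ldots, L_d$.

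I would exploit this layering to locate, in polynomial time, a small separator: for instance, by identifying a prefix of layers whose total size is about $\sqrt{n}$ and whose removal splits the graph into pieces each of size at most half the original. A recursion then proceeds as follows: at each step pick a connected $P_t$-free subproblem of size $n'$, construct a separator $S'$ of size $O(\sqrt{tn'/\log n'})$ from its BFS layering, branch on the at most $3^{|S'|}$ list-valid colorings of $S'$, update the lists of the remaining vertices by removing forbidden colors from the neighborhoods of the colored set, and recurse on each connected component of what is left. When every list has shrunk to size at most $2$, the residual instance is solvable in polynomial time via the standard reduction of list-$2$-coloring to $2$-SAT.

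The main obstacle is achieving three requirements simultaneously: (i) the separators must have size $O(\sqrt{tn/\log n})$; (ii) removing a separator must strictly reduce the instance in a potential that drives the recursion depth to $O(\log n)$; and (iii) the total branching cost summed over the recursion tree must be bounded by $2^{O(\sqrt{tn \log n})}$ rather than something exponentially larger. Balancing these three, probably via a carefully chosen potential function or an amortized accounting across BFS levels, is where the technical heart of the argument from \cite{subexp} lies, and the part I would spend the most time on when attempting to reconstruct the proof.
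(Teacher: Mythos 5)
First, a point of order: the paper you were given does not prove this statement at all --- it is quoted from \cite{subexp} and used as a black box --- so there is no internal proof to compare against; I can only judge your sketch on its own terms. Judged that way, its central step fails. You propose to extract, from the BFS layering of a connected $P_t$-free graph, a prefix of layers of total size about $\sqrt{n}$ whose removal leaves pieces of size at most $n/2$, and more generally balanced separators of size $O(\sqrt{tn'/\log n'})$. No such separators need exist: $P_t$-free graphs are typically dense and can have no sublinear balanced separators whatsoever. For example, $K_{n/2,n/2}$ is $P_4$-free (hence $P_t$-free for every $t\ge 4$) and $2$-colourable, yet any set whose removal leaves all components of size at most $n/2$ must contain about $n/2$ vertices; its BFS from any vertex has only three layers, none of which (nor any prefix of which) is both small and balanced. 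So your requirement (i) cannot be met by a purely graph-structural separator argument, and the divide-and-conquer collapses at the first step. Ironically, the part you flag as the main obstacle, (iii), would have been fine: with separator sizes proportional to $\sqrt{n'}$ the branching costs along any root-to-leaf path form a geometric series dominated by the top level, so the accounting does give $2^{O(\sqrt{tn\log n})}$ --- the failure is existence, not bookkeeping.

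The missing idea is a win--win that exploits the lists rather than only the graph: exactly the dense configurations that destroy separators are favourable for branching, because fixing the colour of a vertex with many neighbours whose lists still have size $3$ shrinks all of those lists at once. The argument in \cite{subexp} is (roughly) of this potential-function/branching type: with a degree threshold of order $\sqrt{n/(t\log n)}$ one branches on vertices having many full-list neighbours, so that the number of full-list vertices drops by the threshold in every branch, and the $P_t$-free structure (shortest paths being induced, so that connected pieces of low degree are small and dominated by few vertices) is invoked only in the residual sparse situation; balancing the two regimes is what produces the exponent $\sqrt{tn\log n}$. Your sketch explicitly defers this ``technical heart'' to the cited paper, and without it --- and with the separator claim being false --- the proposal is not a proof.
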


Our main result is the following:
\begin{theorem} 
\label{main}
The \textsc{list-3-coloring problem} can be
  solved in polynomial time for the class of $(P_6+rP_3)$-free graphs.
\end{theorem}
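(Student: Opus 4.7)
The plan is to split into two cases based on whether $G$ contains an induced $P_6$. If $G$ is $P_6$-free, then in particular $G$ is $P_7$-free, and Theorem~\ref{3colP7} solves the problem in polynomial time. So I will assume that $G$ contains an induced $P_6$ and fix one, call it $P$.

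Starting from $V(P)$, the idea is to greedily build a seed set $A$ as follows: initialize $A := V(P)$, and repeatedly search for an induced $P_3$ in $G \setminus N[A]$; whenever one is found, add its three vertices to $A$. The $(P_6+rP_3)$-freeness of $G$ guarantees that this loop terminates after adding at most $r-1$ copies of $P_3$; otherwise $P$ together with these disjoint $P_3$'s would give an induced $P_6+rP_3$ in $G$. In particular $|A| \leq 3r+3$, and the set $D := V(G) \setminus N[A]$ induces a $P_3$-free graph, i.e., a disjoint union of cliques (each of size at most $3$, or the instance is trivially infeasible). Set $B := N(A) \setminus A$.

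Next, I would enumerate all proper list-colorings of $G|A$; there are at most $3^{|A|} = 3^{O(r)}$ of them, a constant for fixed $r$. For each such coloring, update the list of every vertex in $V(G) \setminus A$ by deleting the colors used by its neighbors in $A$, and discard any branch that produces an empty list. After this update, every vertex in $B$ has a neighbor in $A$ and therefore a list of size at most $2$, while vertices in $D$ may still have lists of size up to $3$. The original instance is colorable if and only if for at least one branch the resulting list-$3$-coloring instance on $G|(B\cup D)$ is colorable.

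The principal obstacle is to solve this residual instance in polynomial time. The natural target is a reduction to 2-SAT, which would be immediate if all remaining lists had size at most $2$; the difficulty is precisely the vertices of $D$ whose lists have not been shortened by the precoloring of $A$. Since $G|D$ is a disjoint union of cliques of size at most $3$, each clique admits only a constant number of list-colorings, and the residual problem becomes that of choosing compatible local colorings for the cliques of $D$ while simultaneously list-$2$-coloring $B$. To keep the running time polynomial, one would propagate constraints through $B$ (vertices of $D$ with $\leq 2$ available colors can then be treated like $B$), and for the cliques of $D$ whose lists remain unconstrained perform further bounded branching based on a carefully chosen dominating or boundary structure, ultimately reducing to an instance of 2-SAT. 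Making this final reduction work, and in particular using $(P_6+rP_3)$-freeness to control the interaction between $B$ and the untouched cliques of $D$, is where I expect the real difficulty of the proof to lie.
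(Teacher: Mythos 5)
Your setup — guessing a bounded seed consisting of a $P_6$ plus at most $r-1$ greedily chosen anticomplete $P_3$'s, enumerating its $3^{O(r)}$ colorings, and observing that the undominated remainder $D$ is a disjoint union of cliques of size at most $3$ while the dominated part $B$ has lists of size at most $2$ — matches the paper's starting point (its ``$r$-seeded precolorings'', whose wilderness is exactly such a union of small cliques). But everything after that is a gap, and it is precisely where the theorem lives. The residual problem you describe, ``choosing compatible local colorings for the cliques of $D$ while simultaneously list-$2$-coloring $B$,'' is not polynomially solvable by any generic propagation or 2-SAT encoding: vertices of $D$ with full lists of size $3$ attached to an arbitrary $2$-list boundary is essentially general list-$3$-coloring, which is NP-hard. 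No amount of ``bounded branching based on a carefully chosen dominating or boundary structure'' is available for free; one must prove that $(P_6+rP_3)$-freeness forces such a structure, and your sketch offers no argument for this, explicitly deferring it as ``where I expect the real difficulty of the proof to lie.''

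For comparison, the paper spends its entire technical core (Sections 2 and 4--6) on exactly this step. It bounds, via the Ding--Seymour--Winkler $\tau$ versus $\nu$ theorem for hypergraphs together with a $P_t$-freeness bound on the parameter $\lambda$, the number of boundary vertices that interact badly with wilderness cliques unless a large ``flock'' of pairwise anticomplete seagulls exists (which $(P_6+rP_3)$-freeness then exploits); this yields polynomially enumerable ``characteristics'' of any hypothetical coloring, which are guessed and moved into the seed. Iterating this produces an equivalent polynomial-size family of precolorings that are nice or easy, then stable, and finally tractable: every list has size at most $2$, possibly with monochromatic-set side constraints solved by the Edwards/Stacho-type result, with the easy case falling back on the $P_7$-free algorithm. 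None of this machinery, nor any substitute for it, appears in your proposal, so the proof is incomplete at its central point.
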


This immediately implies that:
\begin{theorem} 
The \textsc{3-coloring problem} can be
  solved in polynomial time for the class of $(P_6+rP_3)$-free graphs.
\end{theorem}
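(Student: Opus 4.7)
The final statement is an immediate corollary of Theorem \ref{main}, so the plan is to give a trivial reduction from the \textsc{3-coloring problem} to the \textsc{list-3-coloring problem}. Specifically, given an input graph $G$ that is $(P_6+rP_3)$-free, I would construct a $3$-list assignment $L: V(G) \to 2^{[3]}$ by setting $L(v) = \{1,2,3\}$ for every $v \in V(G)$, and then invoke Theorem \ref{main} on the pair $(G, L)$.

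The correctness is clear: a function $f: V(G) \to [3]$ is a $3$-coloring of $G$ if and only if $f$ satisfies $f(v) \in L(v) = \{1,2,3\}$ for every $v \in V(G)$ and is a proper coloring, i.e., if and only if $f$ is a coloring of $(G, L)$. Therefore $(G, L)$ is colorable if and only if $G$ is $3$-colorable, and any coloring output by the algorithm of Theorem \ref{main} is a valid $3$-coloring of $G$.

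For the running time, the reduction itself runs in time $O(|V(G)|)$, since it only writes down one constant-sized list per vertex. After the reduction, Theorem \ref{main} solves the resulting instance in polynomial time (for any fixed $r$). Composing a polynomial-time reduction with a polynomial-time algorithm yields a polynomial-time algorithm, as required.

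There is no real obstacle here: the only substantive content is Theorem \ref{main}, which is assumed. The role of this corollary is simply to record the unweighted consequence of the list-coloring result, so the proof proposal is essentially the one-line observation above.
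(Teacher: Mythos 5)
Your proposal is correct and matches the paper's intent exactly: the paper derives this statement from Theorem \ref{main} with no further argument ("This immediately implies"), and your reduction assigning $L(v)=\{1,2,3\}$ to every vertex is precisely that immediate implication. Nothing is missing.
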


In contrast, we also show that

\begin{theorem}
\label{P5P2}
The \textsc{$k$-coloring problem} restricted to
$P_5+P_2$-free graphs is $NP$-hard for $k \geq 5$.
\end{theorem}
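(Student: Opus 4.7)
The plan is to establish NP-hardness by polynomial-time reduction, first treating the base case $k = 5$ and then extending to $k \geq 6$ by a standard join construction. For the extension: given a $P_5+P_2$-free graph $G$ that is a hard instance of 5-coloring, form $G'$ by attaching a clique $C$ on $k-5$ new vertices, each adjacent to every vertex of $V(G)$. The vertices of $C$ must receive $k-5$ pairwise distinct colors, leaving exactly 5 for $V(G)$, so $G'$ is $k$-colorable if and only if $G$ is 5-colorable. Moreover, every vertex $c \in C$ has degree $|V(G')|-1 \geq 3$ in $G'$, which exceeds the maximum degree $2$ in $P_5+P_2$; hence $c$ cannot belong to any induced copy of $P_5+P_2$ in $G'$, and so every such induced copy lies in $V(G)$, forcing $G'$ to be $P_5+P_2$-free because $G$ is.

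For the base case $k = 5$, I would reduce from a known NP-hard coloring problem using a gadget construction designed to output $P_5+P_2$-free graphs. The naive reduction---adding two mutually adjacent universal vertices to an instance of 3-coloring---fails here: the output is 5-colorable iff the input is 3-colorable, but is $P_5+P_2$-free iff the input already is, and 3-coloring on $P_5+P_2$-free graphs is polynomial by Theorem~\ref{PrPs}. A workable plan is to reduce from an NP-hard problem such as NAE-3-SAT, or from $k$-coloring on a different hereditary graph class (for example, 5-coloring on $P_6$-free graphs, known to be NP-hard), using variable and clause gadgets that share a common ``hub'' of vertices dominating every induced $P_5$ in the output graph.

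The main obstacle is ensuring $P_5+P_2$-freeness of the reduction's output. This is a strong structural property: every induced $P_5$ of the output must dominate every edge, in the sense that no edge is both vertex-disjoint from and non-adjacent to the $P_5$. Standard SAT-to-coloring constructions yield sparse graphs with many edges far from any given induced $P_5$; the proof must therefore embed the reduction inside a carefully chosen dense dominating structure that encodes the constraints of the NP-hard problem without admitting extraneous valid 5-colorings or destroying the $P_5+P_2$-freeness. Verifying that the hub structure interacts correctly with the gadgets---that is, neither artificially satisfies nor artificially restricts the intended coloring constraints---will be the technical heart of the argument.
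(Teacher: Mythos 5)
Your reduction from the case $k=5$ to $k\geq 6$ (joining a clique of $k-5$ new vertices, each complete to everything) is correct: a universal vertex is adjacent to every other vertex of any induced subgraph containing it, so it cannot lie in an induced $P_5+P_2$, and the coloring equivalence is immediate. The problem is that the entire content of the theorem is the base case $k=5$, and there your proposal stops at a plan. You name candidate source problems (NAE-3-SAT, $5$-coloring of $P_6$-free graphs) and a hoped-for ``hub'' structure, but you give no gadget construction, no proof of the coloring equivalence, and no verification of $(P_5+P_2)$-freeness---the step you yourself call the technical heart of the argument. As written, nothing rules out that any such hub-based construction either admits unintended $5$-colorings or creates an induced $P_5+P_2$; and reducing from $5$-coloring of $P_6$-free graphs does not by itself help, since such a reduction must still convert arbitrary $P_6$-free instances into $(P_5+P_2)$-free ones, which is the same difficulty in disguise. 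So there is a genuine gap: the NP-hardness of \col{5} on $(P_5+P_2)$-free graphs is asserted, not proved.

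For comparison, the paper closes exactly this gap using the framework of \cite{huang}. One reduces from \sat{3}: given an instance $I$ and a nice $k$-critical graph $H$ (one with a stable set $\{c_1,c_2,c_3\}$ whose removal does not lower the clique number), one builds $G_{H,I}$ with variable vertices $x_i,\overline{x_i}$, auxiliary vertices $d_i$, clause components isomorphic to $H$, and the set $U$ of non-$C$-type clause vertices made complete to all $X$- and $D$-type vertices; by \ref{lem:reduction}, $I$ is satisfiable if and only if $G_{H,I}$ is $(k+1)$-colorable. The structural point that makes $(P_5+P_2)$-freeness work is that each component of $G_{H,I}-U$ is obtained by substituting stable sets into a $C_5$, hence is $P_5$-free; since $U$ is complete to $X\cup D$, an induced $P_5$ together with an anticomplete $P_2$ would have to lie inside $U\cup C$, i.e., inside a single clause component, which is impossible once $H$ is chosen $P_5$-free. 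Taking $H=H^*$, the $P_5$-free nice $4$-critical graph exhibited in the paper, gives NP-hardness of \col{5} on $(P_5+P_2)$-free graphs, after which your universal-clique argument (or, equivalently, replacing $H^*$ by a $P_5$-free nice $k$-critical graph obtained by adding universal vertices) handles $k\geq 6$.
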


This paper is organized as follows. Section~\ref{sec:tools} is a collection of tools that we use in the proof. Section~\ref{sec:seeds} describes the main
object we work with, a ``$r$-seeded precoloring''.
A $r$-seeded precoloring consists of a graph $G$, a precolored subset $S$
of vertices, and a list of allowed colors for every vertex of $V(G)$.
Sections~\ref{sec:generaltonice},
\ref{sec:nicetostable} and \ref{sec:stableto2SAT} contain a sequence of
theorems that
start with a general $r$-seeded precoloring, and, by ``guessing'' 
(by exhaustive enumeration) the coloring of
a certain bounded-size set of vertices, transform it into
a precoloring that is ``tractable''. Here by tractable we mean
a precoloring for which the precoloring extension problem can be solved in
polynomial time.
Section~\ref{sec:complete} combines the results of the previous three
sections to obtain a proof of
\ref{main}. Finally Section~\ref{sec:hardness} is devoted to the proof of
\ref{P5P2}.

\section{Tools}\label{sec:tools}

In this section we discuss several tools that we repeatedly  use in this paper.
The first is a result of \cite{edwards}:
\begin{theorem}[\cite{edwards}]
  \label{Edwards}
Let $G$ be  a graph, and let $L$ be a list assignment for $G$ 
such that $|L(v)|\leq 2$
for all $v\in V(G)$. Then a coloring of $(G,L)$, or a
determination that none exists, can be obtained in time
$O(|V(G)|+|E(G)|)$.
\end{theorem}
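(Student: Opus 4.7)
The plan is to reduce the problem to \sat{2} and invoke the classical linear-time algorithm for \sat{2} (for instance, the implication-graph / strongly-connected-components algorithm of Aspvall, Plass, and Tarjan). First, I would handle the degenerate case: if some vertex $v$ satisfies $L(v)=\emptyset$, report ``no coloring exists''. Otherwise, for each $v$ I enumerate the (at most two) elements of $L(v)$ as $a_v$ and, when present, $b_v$, and introduce a Boolean variable $x_v$ whose value ``true'' encodes assigning $v$ the color $a_v$ and whose value ``false'' encodes $b_v$. If $|L(v)|=1$, I add a unit clause forcing $x_v$ to the ``true'' value.

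Next, for each edge $uv \in E(G)$ and each common color $c \in L(u) \cap L(v)$ (of which there are at most two per edge), I express the statements ``$u$ receives color $c$'' and ``$v$ receives color $c$'' as literals in the variables $x_u$ and $x_v$, and add the 2-clause forbidding both to hold simultaneously. By construction the satisfying assignments of the resulting \sat{2} instance are in bijection with the $L$-colorings of $G$ via this encoding. The formula has $|V(G)|$ variables and $O(|V(G)| + |E(G)|)$ clauses, and it can be constructed in time $O(|V(G)| + |E(G)|)$.

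Running the linear-time \sat{2} algorithm on this instance then yields either a satisfying assignment or a certificate of unsatisfiability in $O(|V(G)| + |E(G)|)$ time; from a satisfying assignment we read off the coloring by inverting the encoding vertex by vertex. There is no substantial obstacle in the argument; the one point that requires a moment of care is verifying that the translation of $f(u) \neq f(v)$ behaves uniformly across the cases $|L(u) \cap L(v)| \in \{0,1,2\}$ and $|L(v)| \in \{1,2\}$. This is immediate: each shared color contributes exactly one 2-clause (which becomes a unit clause after simplification whenever one of $x_u, x_v$ is already forced by an earlier unit clause), and the absence of shared colors simply contributes no clauses. Combined with the known linear-time \sat{2} solver, this establishes the stated bound.
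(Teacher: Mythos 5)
Your argument is correct: encoding each vertex's (at most two) allowed colors by one Boolean variable, adding a unit clause for singleton lists and one binary clause per shared color on each edge, and running the linear-time Aspvall--Plass--Tarjan \sat{2} algorithm gives exactly the claimed $O(|V(G)|+|E(G)|)$ bound. The paper gives no proof of \ref{Edwards} --- it is cited from the literature --- and your reduction to \sat{2} is the standard argument for this result, so there is nothing to compare beyond noting that your proof is complete and sound.
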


We also need a modification of \ref{Edwards}.
For a graph  $G$, a coloring $c$  of $G$,  and a set $X \subseteq V(G)$,
we say that $X$ is    \emph{monochromatic in $c$}
if $c(u)=c(v)$ for all $u,v\in X$. Let $L$ be a list assignment for  $G$, and
$\mathcal{X}$ a set of subsets of $V(G)$. We say that the triple
$(G,L,\mathcal{X})$  is
\emph{colorable} if there is a coloring $c$ of $(G,L)$ such that
$X$ is monochromatic in $c$ for all $X \in \mathcal{X}$.
We need the following.

\begin{theorem} [\cite{Stacho}]
  \label{Mono}
Let $G$ be a graph, and let $L$ be a list assignment for $G$ 
such that $|L(v)|\leq 2$
for all $v\in V(G)$. Let $\mathcal{X}$ be  a set  of subsets $V(G)$ where
$|\mathcal{X}|$ is polynomial. Then a
coloring of $(G,L,\mathcal{X})$, or a determination that none exists, can be
obtained in polynomial time.
\end{theorem}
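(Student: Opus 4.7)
The plan is to reduce to Theorem~\ref{Edwards} by contracting each prescribed set in $\mathcal{X}$ to a single vertex. The first step is to eliminate overlaps: whenever $X, X' \in \mathcal{X}$ intersect, their union must be monochromatic in any valid coloring of $(G, L, \mathcal{X})$. I would therefore form an auxiliary graph on $\mathcal{X}$ by joining $X$ and $X'$ whenever $X \cap X' \neq \emptyset$, and replace $\mathcal{X}$ by the collection $\mathcal{Y}$ whose members are the unions of the vertex sets in each connected component of this auxiliary graph. The members of $\mathcal{Y}$ are pairwise disjoint, and since $|\mathcal{X}|$ is polynomial, this step takes polynomial time and $|\mathcal{Y}| \leq |\mathcal{X}|$.

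For each $Y \in \mathcal{Y}$, any valid coloring assigns a single color to all of $Y$; this color must lie in $L(Y) := \bigcap_{v \in Y} L(v)$, and $Y$ must be an independent set in $G$ (otherwise two vertices that need the same color would be adjacent). I would check both conditions and answer ``no'' immediately if either fails for some $Y$. Otherwise, I would build a contracted graph $G'$ as follows: each $Y \in \mathcal{Y}$ is replaced by a single vertex $v_Y$ with list $L'(v_Y) := L(Y)$, every remaining vertex keeps its original list, and two vertices of $G'$ are adjacent whenever their preimages in $G$ contain an edge of $G$. Since $|L(v)| \leq 2$ for every $v \in V(G)$, we have $|L'(u)| \leq 2$ for every $u \in V(G')$, so Theorem~\ref{Edwards} applies to $(G', L')$ and yields a coloring or a proof of its non-existence in time polynomial in $|V(G')| + |E(G')|$, hence polynomial in the size of the input.

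The final step is to verify that colorings of $(G', L')$ correspond bijectively to colorings of $(G, L, \mathcal{X})$ via the rule that every vertex of $Y$ inherits the color of $v_Y$. This is immediate from the construction: lists are respected because each $v \in Y$ receives a color in $L(Y) \subseteq L(v)$; edges of $G$ are respected because every such edge either lies inside some $Y$ (already ruled out) or corresponds, after contraction, to an edge of $G'$; and each $Y$ is monochromatic by design. There is no real obstacle in the argument beyond this bookkeeping, and the entire procedure runs in polynomial time.
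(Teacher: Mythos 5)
Your argument is correct, and its first step coincides exactly with what the paper does: merge intersecting members of $\mathcal{X}$ (via components of the auxiliary intersection graph) into a pairwise disjoint family, observing that overlapping monochromatic sets force their union to be monochromatic. Where you diverge is in how the disjoint case is handled. The paper at that point simply invokes \ref{Monodisjoint}, an unpublished result of Stacho cited as a private communication, and gives no further argument; you instead prove the disjoint case yourself by contracting each merged set $Y$ to a single vertex with list $\bigcap_{v\in Y}L(v)$ (after the easy checks that $Y$ is stable and the intersection is non-empty) and then applying \ref{Edwards} to the contracted instance, with the straightforward back-and-forth correspondence between colorings of $(G',L')$ and colorings of $(G,L,\mathcal{X})$. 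This makes your write-up self-contained, which the paper's treatment is not, and the contraction argument is elementary and clearly polynomial, which is all \ref{Mono} demands; the only thing the paper's route buys in exchange is the sharper $O(|V(G)|+|E(G)|)$ bound claimed in \ref{Monodisjoint} for the disjoint case, a refinement that is never needed elsewhere in the paper.
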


Note that if two sets $X$ and $X'$ with $X \cap X' \neq \emptyset$ are
monochromatic in a coloring $c$,  then $X \cup X'$ is also monochromatic in $c$.
Thus, given a triple  $(G,L,\mathcal{X})$ as in \ref{Mono}
we can compute in polynomial time a triple 
$(G,L,\mathcal{X}')$ where the sets in $\mathcal{X}'$ are pairwise disjoint and
$(G,L, \mathcal{X})$ has a coloring  if and
only if $(G, L, \mathcal{X}')$ does. Thus \ref{Mono} follows from

\begin{theorem} [\cite{Stacho}]
  \label{Monodisjoint}
Let $G$ be a graph, and let $L$ be a list assignment for $G$ 
such that $|L(v)|\leq 2$
for all $v\in V(G)$. Let $\mathcal{X}$ be  a set of pairwise
disjoint subsets of $V(G)$. Then a
coloring of $(G,L,\mathcal{X})$, or a determination that none exists, can be
obtained in time $O(|V(G)|+|E(G)|)$. 
\end{theorem}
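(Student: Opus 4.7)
The plan is to handle the monochromaticity constraint by contraction and then reduce to a black-box call of \ref{Edwards}.

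First, I preprocess each $X \in \mathcal{X}$: compute $L(X) := \bigcap_{v \in X} L(v)$, and check that $G|X$ has no edge. If $L(X) = \emptyset$, or if $G|X$ contains an edge, then $(G, L, \mathcal{X})$ has no coloring (in the first case any monochromatic color for $X$ would have to lie in $L(X)$; in the second case $X$ could not simultaneously be monochromatic and properly colored), and I report this. Since the sets in $\mathcal{X}$ are pairwise disjoint, this preprocessing runs in $O(|V(G)| + |E(G)|)$ time.

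Next I construct a graph $G'$ by contracting each $X \in \mathcal{X}$ to a single vertex. Formally, partition $V(G)$ into blocks by putting each $X \in \mathcal{X}$ in its own block and placing every vertex not in $\bigcup \mathcal{X}$ in its own singleton block; then $G'$ is the simple graph whose vertex set is the set of blocks, with two blocks adjacent iff there is an edge of $G$ with one endpoint in each. Define a list assignment $L'$ on $G'$ by $L'(X) := L(X)$ for $X \in \mathcal{X}$ and $L'(\{u\}) := L(u)$ for $u \notin \bigcup \mathcal{X}$; each list has size at most $2$. It is straightforward to check that $(G, L, \mathcal{X})$ is colorable if and only if $(G', L')$ is: any coloring $c$ of $(G, L, \mathcal{X})$ descends to $G'$ by sending each block to the common color of its vertices, while any coloring $c'$ of $(G', L')$ lifts to $G$ by giving every vertex of $G$ the color of its block. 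The preprocessing ensures that both directions yield proper colorings that respect the list and monochromaticity constraints.

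Finally I apply \ref{Edwards} to $(G', L')$, which runs in $O(|V(G')| + |E(G')|) = O(|V(G)| + |E(G)|)$ time, since contraction does not increase vertex or edge counts. The only operational subtlety is executing the contraction in linear time; this can be done by scanning the adjacency lists of $G$ once and replacing each endpoint that lies in some $X$ by the block corresponding to $X$, using a temporary bitmap to suppress parallel edges. No deeper obstacle arises, and translating a coloring of $G'$ back to $G$ is linear, so the total running time matches the claimed $O(|V(G)| + |E(G)|)$ bound.
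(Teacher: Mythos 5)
Your argument is correct. Note that the paper itself gives no proof of this statement: it is attributed to a private communication of Stacho and used as a black box (the paper only shows how the overlapping-sets version, Theorem~\ref{Mono}, reduces to this disjoint version). So your contraction argument is a genuinely self-contained substitute rather than a variant of a proof in the text. The reduction is sound: rejecting any $X \in \mathcal{X}$ that contains an edge or has $\bigcap_{v\in X}L(v)=\emptyset$ is exactly the right feasibility check, and after that the quotient graph $G'$ with lists $L'(X)=\bigcap_{v\in X}L(v)$ (still of size at most $2$) satisfies that $(G,L,\mathcal{X})$ is colorable if and only if $(G',L')$ is, with both translations being immediate; disjointness of $\mathcal{X}$ is what makes the block partition well defined. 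Applying \ref{Edwards} to $(G',L')$ then gives the claimed $O(|V(G)|+|E(G)|)$ bound, since contraction does not increase the vertex or edge count and can be carried out in linear time as you describe. A minor simplification: the check $\bigcap_{v\in X}L(v)\neq\emptyset$ could be omitted, since an empty list on the contracted vertex would cause \ref{Edwards} to report infeasibility anyway; the stability check on each $G|X$, however, is genuinely needed and you correctly include it.
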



Next we present  a result from \cite{DSW}. A  {\em hypergraph} $H$ consists of a finite set $V(H)$ of vertices and a set $E(H)$ of non-empty subsets of
$V(H)$ called
{\em hyperedges}.  A {\em matching} in $H$ is a set of
pairwise disjoint non-empty hyperedges, and a {\em hitting set} in $H$ is a
set of vertices meeting every hyperedge. We denote by $\nu(H)$ the maximum size
of a matching in $H$, and by $\tau(H)$ the minimum size of a hitting set in $H$.
The parameters $\nu(H)$ and $\tau(H)$ are well-known, but we need one more.
We denote by $\lambda(H)$ the maximum $k\geq 2$ such that there are edges
$e_1, \ldots, e_k \in E(H)$ with the property that for every $i,j$ with 
$1 \leq i <j \leq k$ there exist $v_{i,j} \in V(H)$ satisfying
$\{h: 1 \leq h \leq k  \text{ such that } v_{i,j} \in e_h\}=\{i,j\}.$
If there is no such $k$, we set $\lambda(H)=2$. We need the following:

\begin{theorem}[\cite{DSW}]
\label{nutaulambda}
For every hypergraph $H$, 
$\tau(H) \leq 11\lambda(H)^2(\lambda(H)+\nu(H)+3){{\lambda(H)+\nu(H)} \choose {\nu(H)}}^2.$
\end{theorem}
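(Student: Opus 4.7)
The plan is to use a maximum matching to localize candidate hitting vertices, and then to bound the combinatorial complexity of how hyperedges interact with the matching using the parameter $\lambda(H)$. Fix a maximum matching $M = \{f_1, \ldots, f_\nu\}$ of size $\nu = \nu(H)$ and set $U = \bigcup_i f_i$. By maximality of $M$, every hyperedge meets $U$, so a hitting set can be built from vertices of $U$ together with a controlled additional contribution. For each edge $e$, record the trace $T(e) = (e \cap f_1, \ldots, e \cap f_\nu)$ and group edges by their trace.

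The heart of the argument is a Sauer--Shelah flavored extraction. I would show that if the number of distinct traces (or of edges within a trace class exhibiting essentially different ``witness patterns'') exceeds an appropriate binomial, then one can iteratively select edges $e_1, \ldots, e_{\lambda + 1}$ together with vertices $v_{i,j}$ lying in exactly the pair $\{e_i, e_j\}$ \emph{among the selected family}, contradicting the definition of $\lambda(H)$. Carrying out this extraction together with the usual shattering count should bound trace-diversity by $\binom{\lambda + \nu}{\nu}$ at two separate places --- once for the trace on the matching, once for the witness structure --- and these multiply to give the $\binom{\lambda + \nu}{\nu}^2$ factor. Within each trace class, a constant-in-$\lambda$ number of well-chosen vertices should suffice to hit every edge of that class, contributing the prefactor $O(\lambda^2 (\lambda + \nu + 3))$.

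The main obstacle is performing the $\lambda$-extraction rigorously. Even though each witness needs only be correct within the chosen family, rather than globally in $H$, one must still select the $\lambda + 1$ edges and the $\binom{\lambda + 1}{2}$ witnesses compatibly: every new edge must admit a separating witness with every previously chosen edge, and every newly committed witness must avoid accidentally entering a third edge already in the family. Keeping this iterative selection consistent, while producing the exact polynomial bound rather than a much weaker tower-like estimate, is where the technical work concentrates. Once the extraction is in place, combining it with the matching-based reduction --- adding the vertices picked inside each trace class to a base set of size $O(\lambda(\lambda+\nu))$ derived from $U$ --- yields the claimed inequality with the stated constants.
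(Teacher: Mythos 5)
This statement is not proved in the paper at all: it is quoted verbatim from Ding, Seymour and Winkler \cite{DSW}, so there is no internal argument to compare yours against. Judged on its own, your proposal is a plan rather than a proof, and the plan has a genuine gap at its center. The two load-bearing claims --- that ``trace diversity'' is bounded by $\binom{\lambda+\nu}{\nu}$ via a Sauer--Shelah extraction, and that within each trace class a constant-in-$\lambda$ number of vertices hits every edge of the class --- are exactly the content of the theorem, and you leave both as ``should'' statements, explicitly deferring the $\lambda$-extraction as ``where the technical work concentrates.'' Moreover, the sketch conflates two different notions of trace, and neither version works as stated. If the trace of $e$ records the actual sets $e\cap f_1,\ldots,e\cap f_\nu$, then edges in one class do share a common vertex (so one vertex hits the class), but the number of classes is governed by subsets of the matched edges $f_i$, whose sizes are unbounded, and no shattering argument in $\lambda$ and $\nu$ alone caps it. If instead the trace only records which $f_i$ are met, there are at most $2^\nu$ classes, but edges in a class need not share any vertex, and bounding the cover number of a single class is precisely the original problem again.

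The crux is already visible for $\nu(H)=1$: every hyperedge meets the single matched edge $f_1$, i.e.\ the edges pairwise intersect inside $f_1$, yet $f_1$ may be enormous, and one must use the hypothesis that there is no large family $e_1,\ldots,e_k$ with pairwise ``private'' vertices $v_{i,j}$ to extract a bounded hitting set. Your matching-plus-trace machinery gives no leverage on this case, which is where the $\lambda^2(\lambda+\nu+3)\binom{\lambda+\nu}{\nu}^2$ shape of the bound actually comes from in \cite{DSW}. So while the opening reduction (every edge meets the union of a maximum matching) is fine, the argument from there to the stated inequality is missing, and for the purposes of this paper the correct move is simply to cite \cite{DSW}, as the authors do.
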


Let us discuss how we apply \ref{nutaulambda}. Let $G$ be a graph. 
A set $X \subseteq V(G)$ is {\em stable} if no edge of $G$ has both its ends 
in $X$. Let  $A \subseteq V(G)$.
An \emph{attachment} of $A$ is a vertex of $V(G) \setminus A$ with a neighbor in
$A$.
For $B \subseteq V(G) \setminus A$ we denote by $B(A)$ the set of
attachments of $A$ in $B$.  If $F=G|A$, we sometimes
write $B(F)$ to mean $B(V(F))$.

\begin{theorem}
  \label{smalllambda}
Let $t$ be an integer and let $G$ be a $P_t$-free graph, and let
$X,Y \subseteq V(G)$ be disjoint, where $X$ is stable and
every component of $Y$ has size at most $p$. Let $\mathcal{Z}$  be
a set of connected subsets of size $q$ of $Y$, each of which has an
attachment in $X$.
Let $H$ be a hypergraph with vertex set $X$ and hyperedge set
$\{ X(Z) \text { : } Z \in \mathcal{Z}\}$. Then
$\lambda(H) \leq  {p \choose q} \lceil{{t+1} \over 2}\rceil.$
\end{theorem}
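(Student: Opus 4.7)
The plan is to argue by contradiction: suppose $\lambda(H) > \binom{p}{q} \lceil (t+1)/2 \rceil$ and derive an induced $P_t$ in $G$. Set $k_0 := \lceil (t+1)/2 \rceil$.

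First I would apply pigeonhole. Let $e_1, \ldots, e_k \in E(H)$ be hyperedges witnessing $\lambda(H) \geq k > \binom{p}{q} k_0$, with $e_i = X(Z_i)$ for pairwise distinct $Z_i \in \mathcal{Z}$. Each $Z_i$ is connected, hence contained in one component of $G|Y$; each such component has at most $p$ vertices, and therefore at most $\binom{p}{q}$ subsets of size $q$. Thus the sets $Z_i$ meet more than $k_0$ distinct components of $G|Y$. After relabeling, $Z_1, \ldots, Z_{k_0+1}$ lie in pairwise distinct components of $G|Y$; since $G|Y$ is an induced subgraph of $G$, these components, and hence the $Z_j$, are pairwise anticomplete in $G$.

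Next I would assemble an induced path from the $\lambda$-witnesses. For $j = 1, \ldots, k_0$ let $u_j := v_{j, j+1}$. By the definition of $\lambda$, $u_j \in X$ has a neighbor in $Z_j$ and in $Z_{j+1}$ but none in $Z_{j'}$ for any other $j' \in [k]$. Set $A_j := N(u_j) \cap Z_j$ and $B_j := N(u_j) \cap Z_{j+1}$, both nonempty. For $j = 2, \ldots, k_0$, since $G|Z_j$ is connected, choose a shortest path $Q_j$ in $G|Z_j$ from some vertex of $B_{j-1}$ to some vertex of $A_j$; being shortest, $Q_j$ is induced in $G|Z_j$ and no internal vertex of $Q_j$ lies in $A_j \cup B_{j-1}$. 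Pick any $a_1 \in A_1$ and $b_{k_0+1} \in B_{k_0}$ and form
\[ P := a_1 - u_1 - Q_2 - u_2 - Q_3 - \cdots - u_{k_0} - b_{k_0+1}, \]
with each $Q_j$ traversed from its $B_{j-1}$-end to its $A_j$-end.

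Finally I would verify that $P$ is an induced path on at least $2k_0 + 1 \geq t$ vertices. The $u_j$'s are pairwise non-adjacent (as $X$ is stable) and lie in $X$, which is disjoint from $Y$; distinct $Q_j$'s sit in pairwise anticomplete parts of $G$; the shortest-path choice forbids chords from $u_{j-1}$ or $u_j$ to internal vertices of $Q_j$; and any chord from $u_j$ to $Q_{j'}$ (or to $a_1$ or $b_{k_0+1}$) with $j' \notin \{j, j+1\}$ is ruled out by the $\lambda$-property of $u_j$. Counting gives $|V(P)| \geq 1 + k_0 + (k_0-1) + 1 = 2k_0 + 1 \geq t$, so $P$ contains an induced $P_t$, contradicting $P_t$-freeness. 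The only delicate step is preserving the induced-path property: each $u_j$ may have further attachments in $Z_j$ and $Z_{j+1}$ beyond the endpoints used, but taking $Q_j$ to be a shortest $B_{j-1}$-to-$A_j$ path in $G|Z_j$ cleanly eliminates all such chords, since an internal vertex in $A_j \cup B_{j-1}$ would allow $Q_j$ to be shortened.
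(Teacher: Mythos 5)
Your proof is correct and takes essentially the same route as the paper's: the paper extracts $\lceil (t+1)/2\rceil$ pairwise anticomplete witness sets by greedily coloring an auxiliary conflict graph on the hyperedges whose maximum degree is at most $\binom{p}{q}-1$ (your direct pigeonhole on components of $Y$ is the same $\binom{p}{q}$-per-component count), and then builds the identical long path $q_1-v_{1,2}-Q_2-\cdots-Q_{m-1}-v_{m-1,m}-q_m$ through the witnesses $v_{i,i+1}$ and connecting paths inside the chosen sets. Your explicit shortest-path choice to ensure the concatenation is induced is a detail the paper leaves implicit; otherwise the arguments coincide.
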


\begin{proof}
  Let $\lambda=\lambda(H)$ and let $e_1, \ldots, e_{\lambda}$, and
  $\{v_{i,j}\}_{1 \leq i<j\leq \lambda}$ be
  as in the definition of $\lambda(H)$. For $i \in \{1, \ldots, \lambda\}$,
  let $Y_i \subseteq \mathcal{Z}$ be such that $e_i=X(Y_i)$.
  Define a graph $F$ with vertex set  $\{e_1, \ldots, e_{\lambda}\}$ and such
  that  $e_i$ is adjacent to $e_j$ in $F$ if
either $Y_i \cap Y_j \neq \emptyset$, or 
  in $G$ there is an edge
  with one end in $Y_i$ and the other end in $Y_j$. Then
  $deg_F(e_i) \leq {p \choose q} -1$ for every $e_i \in V(F)$. It follows that
  $F$ is $p \choose q$-colorable, and so $F$ has a stable set $S$ with
  $|S| \geq {\lambda \over {p \choose q}} \geq \lceil {{t+1} \over 2} \rceil.$ Write
  $m=\lceil{{t+1} \over 2}\rceil$.
   Renumbering if necessary, we
  may assume that $e_1, \ldots, e_m \in S$.
  Let $q_1$ be a neighbor of $v_{1,2}$ in $Y_1$, and let $q_m$
  be a neighbor of $v_{m-1,m}$ in $Y_m$. For $i \in \{2, \ldots, m-2\}$
  let $Q_i$ be a path from $v_{i-1,i}$ to $v_{i,i+1}$ with interior
  in $Y_i$ (such a path exists by the definition of $H$). Now
  $q_1-v_{1,2}-Q_2-v_{2,3}-\ldots-v_{m-2,m-1}-Q_{m-1}-v_{m-1,m}-q_m$ is a path of length at least $t$ in $G$, a contradiction. This proves~\ref{smalllambda}.
\end{proof}

We deduce
\begin{theorem}
  \label{nutau}
  Let $r$ be an integer, and let $G$ be $(P_6+rP_3)$-free. Let $H$ be
  a hypergraph as in \ref{smalllambda}. Let $C={p \choose q} (2r+4)$.
  Then 
  $\tau(H) \leq 11C^2(C+\nu(H)+3){{C+\nu(H)} \choose {\nu(H)}}^2.$
  In particular, there is a function
  $f_{r,p,q}: \mathbb{N} \rightarrow \mathbb{N}$
  such that $\tau(H) \leq f_{r,p,q}(\nu(H))$.
\end{theorem}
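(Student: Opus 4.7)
The plan is to bound $\lambda(H)$ in terms of $r$, $p$, $q$ and then invoke \ref{nutaulambda}. Specifically, I will show $\lambda(H) \leq C := \binom{p}{q}(2r+4)$; since the right-hand side of \ref{nutaulambda} is monotone nondecreasing in $\lambda(H)$, this immediately yields the stated inequality for $\tau(H)$, and the ``in particular'' assertion follows because with $r$, $p$, $q$ fixed the bound depends only on $\nu(H)$.

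To bound $\lambda(H)$, I adapt the argument from the proof of \ref{smalllambda}. Suppose for contradiction that $\lambda(H) \geq C$, and set $m = 2r + 4$. Let $e_1, \ldots, e_{\lambda(H)}$ together with the vertices $v_{i,j}$ witness $\lambda(H)$; for each $i$ choose $Z_i \in \mathcal{Z}$ with $X(Z_i) = e_i$, and form the auxiliary graph $F$ on $\{e_1, \ldots, e_{\lambda(H)}\}$ by making $e_i$ adjacent to $e_j$ whenever $Z_i$ and $Z_j$ lie in the same connected component of $G|Y$. Each component of $G|Y$ has at most $p$ vertices and therefore contains at most $\binom{p}{q}$ connected $q$-vertex subsets, so $\deg_F(e_i) \leq \binom{p}{q} - 1$; hence $F$ is $\binom{p}{q}$-colorable and admits a stable set $S$ with $|S| \geq \lambda(H)/\binom{p}{q} \geq m$. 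After renumbering, assume $e_1, \ldots, e_m \in S$; observe that for distinct $i, j \in \{1, \ldots, m\}$, the sets $Z_i$ and $Z_j$ lie in different components of $G|Y$, hence they are vertex-disjoint and there is no edge of $G$ between them.

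To derive the contradiction I construct an induced copy of $P_6 + rP_3$ in $G$. Use $e_1, \ldots, e_4$ exactly as in the proof of \ref{smalllambda}: pick a neighbor $q_1 \in Z_1$ of $v_{1,2}$, a neighbor $q_4 \in Z_4$ of $v_{3,4}$, and for $i \in \{2, 3\}$ a shortest path $Q_i$ from $v_{i-1,i}$ to $v_{i,i+1}$ with interior in $Z_i$; the concatenation $q_1 - v_{1,2} - Q_2 - v_{2,3} - Q_3 - v_{3,4} - q_4$ is an induced path on at least seven vertices, and its first six form an induced $P_6$. For each $h \in \{1, \ldots, r\}$ form a $P_3$ of the form $a_h - v_{2h+3,\,2h+4} - b_h$ where $a_h \in Z_{2h+3}$ and $b_h \in Z_{2h+4}$ are neighbors of $v_{2h+3,\,2h+4}$. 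The main bookkeeping step, which I expect to be the principal obstacle, is verifying that these $r+1$ subgraphs are pairwise vertex-disjoint and have no edges of $G$ between them. Three observations do all the work: (i) by the defining property of $v_{i,j}$ it lies in exactly $e_i$ and $e_j$, so it has no neighbor in $Z_k$ for $k \notin \{i, j\}$; (ii) $X$ is stable, so the various $v_{i,j}$'s are pairwise non-adjacent; and (iii) the sets $Z_i$ with $i \in S$ are pairwise vertex-disjoint with no edges of $G$ between them. Enumerating the possible cross-edges and ruling each out using these facts yields the required induced $P_6 + rP_3$, contradicting the hypothesis on $G$ and completing the proof that $\lambda(H) \leq C$.
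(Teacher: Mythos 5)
Your proof is correct, but it takes a genuinely different route from the paper's. The paper proves \ref{nutau} in two lines: since $P_6+rP_3$ is an induced subgraph of $P_{4r+6}$ (place one spare vertex between consecutive components), $G$ is $P_{4r+6}$-free, so \ref{smalllambda} applied with $t=4r+6$ gives $\lambda(H)\leq \binom{p}{q}\lceil (4r+7)/2\rceil=\binom{p}{q}(2r+4)=C$, and \ref{nutaulambda} finishes the job (using the same monotonicity in $\lambda$ that you make explicit and the paper leaves implicit). You instead bypass the containment observation and re-run the construction underlying \ref{smalllambda}, building the forbidden graph directly: four of the $2r+4$ chosen hyperedges supply the induced $P_6$ via $q_1-v_{1,2}-Q_2-v_{2,3}-Q_3-v_{3,4}-q_4$, and the remaining $2r$ supply the $r$ disjoint $P_3$'s of the form $a_h-v_{2h+3,2h+4}-b_h$. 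Your verification sketch is sound: the shortest-path choice makes each $Q_i$ induced, stability of $X$ rules out edges among the $v_{i,j}$'s, the defining property of $v_{i,j}$ rules out edges from $v_{i,j}$ to $Z_k$ with $k\notin\{i,j\}$, and your choice of auxiliary graph $F$ (adjacency meaning ``same component of $G|Y$,'' a mild variant of the paper's $F$) guarantees the selected $Z_i$'s are pairwise disjoint and anticomplete, which kills all remaining cross edges; the constant you obtain coincides with the paper's. In short, the paper's route is shorter because it reuses \ref{smalllambda} as a black box, while yours is self-contained at the cost of essentially duplicating that lemma's proof for the specific forest $P_6+rP_3$.
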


\begin{proof}
  Since $P_6+rP_3$ is contained in $P_{4r+6}$, and since
  $G$ is $(P_6+rP_3)$-free, it follows that $G$ is $P_{4r+6}$-free.
  By \ref{smalllambda}, $\lambda(H) \leq C$. But now \ref{nutau}
  follows directly from \ref{nutaulambda}.
  \end{proof}

We finish this section with some terminology.
Let $G$ be a graph. For $X \subseteq V(G)$ we denote by
$G \setminus X$ the graph $G|(V(G) \setminus X)$.
If $X=\{x\}$, we write $G \setminus x$ to mean $G \setminus \{x\}$.
For disjoint subsets $A,B \subset V(G)$ we say that $A$ is \emph{complete} to $B$ if every vertex of $A$ is adjacent to every vertex of $B$, and that 
$A$ is \emph{anticomplete} to $B$ if every vertex of $A$ is non-adjacent to
every vertex of $B$. If $A=\{a\}$ we write $a$ is complete (or anticomplete)
to $B$ to mean $\{a\}$ that is complete (or anticomplete) to $B$.
If $a \not \in B$ is not complete and not anticomplete to $B$,
we say that $a$ is \emph{mixed} on $B$. Finally, if $H$ is an induced subgraph
of $G$ and $a \in V(G) \setminus V(H)$, we say that $a$ is \emph{complete to, 
anticomplete to}, or \emph{mixed on} $H$ if $a$ is complete to, anticomplete 
to,  or mixed on 
$V(H)$, respectively. For $v \in V(G)$ we write $N_G(v)$ (or $N(v)$ when there is no danger of confusion) to mean the set of vertices of $G$ that are adjacent to  $v$. Observe that since $G$ is simple, $v \not \in N(v)$.  
For $X \subseteq V(G)$ a {\em component}
of $X$ (or of $G|X$) is the vertex set of a maximal connected subgraph of
$G|X$.

Let $L$ be a list assignment for $G$. We denote by $X^0(L)$ the set
of all vertices $v$ with $|L(v)|=1$. For $X \subseteq V(G)$, we write
$(G|X,L)$ to mean the list coloring problem where we restrict the
domain of the list assignment $L$ to $X$.  Let $X \subset X^0(L)$,
and let $Y \subset V(G)$. We say that a list assignment $M$
is \emph{obtained from $L$ by updating
 $Y$ from $X$} if $M(v)=L(v)$ for every $v \not \in Y$, and
$M(v)=L(v) \setminus \bigcup_{x \in N(v) \cap X} L(x)$ for every
$v \in Y$.  If $Y=V(G)$, we say that $M$ is \emph{obtained from $L$ by
  updating from $X$}.  If $M$ is obtained from $L$ by updating from
$X^0(L)$, we say that $M$ is \emph{obtained from $L$ by updating}.
For $v \in X^0(L)$ we will  not distinguish between the set $L(v)$ and its
unique element. For $X \subseteq X^0(L)$, we will regard $L$ as a coloring of
$G|X$.
Let
$L_0=L$, and for $i \geq 1$ let $L_i$ be obtained from $L_{i-1}$ by
updating. If $L_i=L_{i-1}$, we say that $L_i$ is \emph{obtained from
  $L$ by updating exhaustively}.
Since
$0 \leq \sum_{v \in V(G)} |L_j(v)| < \sum_{v \in V(G)} |L_{j-1}(v)| \leq 3|V(G)|$
for all $j < i$,
it follows that $i \leq 3 |V(G)|$ and
thus $L_i$ can be computed from $L$ in polynomial time.
This observation allows us to set the following convention.

\begin{theorem}
  \label{convention}
If $G$ is a graph, $L$ a list assignment for $G$, and $v \in V(G)$, then 
there is no $u \in N(v) \cap X^0(L)$ with $L(u) \subseteq L(v)$.
\end{theorem}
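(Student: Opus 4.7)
The plan is to recognize that this statement is not a nontrivial theorem but rather a WLOG convention justified by the preprocessing step described immediately before it. So the proof consists of exhibiting a polynomial-time transformation $(G,L) \mapsto (G, L^*)$ that preserves the set of colorings and produces an instance satisfying the stated property; thereafter, any subsequent algorithmic result may silently assume the convention.

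First I would take $L^* = L_\infty$, the list assignment obtained from $L$ by updating exhaustively (as defined just before the statement). The displayed inequality in that paragraph already shows that the iteration stops after at most $3|V(G)|$ rounds, each costing $O(|V(G)|+|E(G)|)$, so $L^*$ is computable in polynomial time.

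Next I would verify that $(G,L)$ and $(G,L^*)$ have exactly the same proper colorings. The inclusion $L^*(v)\subseteq L(v)$ for all $v$ is obvious, so every coloring of $(G,L^*)$ is a coloring of $(G,L)$. For the converse, suppose $f$ is a coloring of $(G,L)$, and consider one round of updating: for each $v$ and each $u \in N(v) \cap X^0(L)$, we have $f(u) \in L(u) = \{L(u)\}$, and since $uv \in E(G)$, $f(v) \ne f(u)$, so removing $L(u)$ from $L(v)$ does not remove $f(v)$. Iterating, $f(v) \in L^*(v)$ for every $v$, so $f$ is a coloring of $(G,L^*)$.

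Finally I would verify the convention itself. By definition, $L^*$ is a fixed point of the update operation, so one more round of updating does nothing. Thus for every $v$ and every $u \in N(v) \cap X^0(L^*)$, the singleton $L^*(u)$ is disjoint from $L^*(v)$; in particular $L^*(u) \not\subseteq L^*(v)$, which is the desired conclusion. The only minor subtlety is that $X^0(L^*)$ may strictly contain $X^0(L)$ since lists can only shrink, but the conclusion is stated with respect to the updated list assignment, which is precisely what the convention intends. There is no substantive obstacle here; the content of \ref{convention} is bookkeeping that lets later proofs preprocess any list coloring instance and assume, without loss of generality, that no neighbor of $v$ carries a singleton list contained in $L(v)$.
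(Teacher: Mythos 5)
Your proposal is correct and matches the paper's intent exactly: the paper gives no separate proof of \ref{convention}, since the paragraph preceding it (exhaustive updating terminates in at most $3|V(G)|$ rounds, preserves the set of colorings, and its fixed point has the stated property) is precisely the justification you spell out. Your write-up simply makes that bookkeeping explicit.
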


A {\em seagull} $S$ in $G$ is a $P_3$ $a-b-c$ in $G$. We write $V(S)=\{a,b,c\}$.
    The vertices $a$ and $c$
    are called the {\em wings} of the seagull, and $b$ is the {\em body} of the seagull. For $X,Y \subseteq V(G)$, $S$ is an $X$-seagull if $V(S) \in X$,
    and $S$ is an $(X,Y)$-seagull if $S$ has one wing in $X$, and the body and the other wing in $Y$.
      A {\em flock} is a set of pairwise disjoint seagulls that are pairwise anticomplete to each other. The {\em size} of a flock is its cardinality.

\section{Seeded precolorings} \label{sec:seeds}

Given a $(P_6+rP_3)$-free graph $G$ with a $3$-list assignment $L$, our strategy
for checking if $(G,L)$ is colorable   involves several steps, each of which
consists of choosing a small subset
$S \subseteq V(G)$, precoloring  $S$ and updating the lists. In view of
\ref{Edwards}, if we arrive at a situation where every vertex has list
of size at most two, then we are done. We show, roughly, that this can always
be achieved. To keep track of the precoloring and updating process, we
define the following object.

A \emph{$r$-seeded precoloring} of the pair $(G,L')$ is a triple 
$P = (G, L,S)$ such that 
\begin{packed_enum}
\item $L$ is a list assignment for $G$, and $L(v) \subseteq \{1,2,3\}$ for
  every $v \in V(G)$, and $|L(v)|=1$ for every $v \in S$,
\item $(G, L', S, L)$ is a precoloring of $(G,L')$;
\item   $G|S$ contains $P_6+(r-1)P_3$.
\end{packed_enum}
We call $S$ the {\em seed} of $P$, and write $S(P)$ to mean $S$.
Similarly, we use the notating $G(P)$  and $L(P)$.
The {\em boundary} $B(P)$ of $P$ is the set of all vertices
$v \in V(G)$  with $|L(v)|=2$ and such that
$v$ has a neighbor $s \in S$ with $L(s)=\{1,2,3\} \setminus L(v)$.
We denote by $B(P,i)$ the set of vertices $b \in B$ with $i \in L(b)$,
and by $B(P)_{ij}$ the set of vertices $b \in B(P)$ with $L(b)=\{i,j\}$.
Finally,  the {\em wilderness} $W(P)$ of $P$ is the set
$V(G) \setminus (X^0(L) \cup B(P))$.
The reason for the name ``wilderness'' is that every $v \in V(G)$ with
$|L(v)|=3$
belongs to $W(P)$, and so by \ref{Edwards} $W(P)$ is the set where the
algorithmic difficulty lies.

We observe the following.
\begin{theorem}
  \label{W}
  Let $r$ be an integer,
  let $G$ be a $P_6+rP_3$-graph and let $P=(G,L,S)$ be an $r$-seeded precoloring of $G$. Assume that $G$ does not contain a clique of size four.
  Then each component of $W(P)$ is a clique of size at most three.
\end{theorem}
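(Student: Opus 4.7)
The plan is to show two intermediate facts from which the claim follows directly: (i) $W(P)$ is anticomplete to $S$ in $G$, and (ii) no component of $W(P)$ contains an induced $P_3$. Given (ii), each component, being a connected $P_3$-free graph, is a clique, and the $K_4$-free hypothesis then bounds its size by three.

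For (i), I would invoke convention \ref{convention}. Let $v \in W(P)$ and suppose, for contradiction, that $v$ has a neighbor $s \in S$. Since $|L(s)| = 1$, we have $s \in X^0(L)$, so convention \ref{convention} forces $L(s) \not\subseteq L(v)$; equivalently, the single element of $L(s)$ lies in $\{1,2,3\} \setminus L(v)$. Because $v \in W(P) \subseteq V(G) \setminus X^0(L)$ we have $|L(v)| \neq 1$, and we may assume $|L(v)| \geq 1$ (else $(G,L)$ has no coloring). If $|L(v)| = 3$, then $\{1,2,3\} \setminus L(v) = \emptyset$, a contradiction; if $|L(v)| = 2$, then $L(s) = \{1,2,3\} \setminus L(v)$, which by definition places $v$ in $B(P)$, again contradicting $v \in W(P)$.

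For (ii), suppose some component of $W(P)$ contains an induced $P_3$, say $a$-$b$-$c$. By condition~3 in the definition of an $r$-seeded precoloring, $G|S$ contains an induced $P_6 + (r-1)P_3$, and (i) tells us $\{a,b,c\}$ is anticomplete to $S$. Their disjoint union therefore induces a $P_6 + rP_3$ in $G$, contradicting the hypothesis. I do not expect any serious obstacle: the whole argument consists of reading the definitions of $B(P)$, $W(P)$, and convention \ref{convention} against each other, and the only point requiring care is checking all possible values of $|L(v)|$ for $v \in W(P)$.
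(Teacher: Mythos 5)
Your proof is correct and takes essentially the same route as the paper's: the paper's two-sentence argument asserts that $G|W(P)$ is $P_3$-free because $G|S$ contains a $P_6+(r-1)P_3$, leaving implicit exactly the anticompleteness of $W(P)$ to $S$ that you derive explicitly from convention \ref{convention} and the definitions of $B(P)$ and $W(P)$. Your write-up is simply a more detailed version of the same argument, so there is nothing to fix.
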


\begin{proof}
  Since $G|S$ contains $P_6+(r-1)P_3$, it follows that $G|W$ is $P_3$-free.
  Consequently every component of $W$ is a clique, and since $G$ has no clique of size four, \ref{W} follows.
  \end{proof}

Let $P=(G,L,S)$ be an $r$-seeded precoloring. Let  $U \subseteq V(G)$ and a
let $c$ be a coloring of $G|U$. We say that the seeded precoloring
$P'=(G,L',S')$ is
obtained from $P$ by {\em moving $U$ to the seed with  $c$} if
$S'=S \cup U$, and $L'$ is obtained by updating exhaustively from the list
assignment $L''$, defined as follows:
$L''(u)=c(u)$ for every $u \in U$, and $L''(v)=L(v)$
for every $v \in V(G) \setminus U$.

For an $r$-seeded  precoloring $P$ and a collection
 $\mathcal{L}$ of $r$-seeded precolorings, we say that $\mathcal{L}$ is an
\emph{equivalent collection} for $P$ (or that $P$ is \emph{equivalent}
to $\mathcal{L}$) if $P$ has a precoloring extension if and only if at
least one of the precolorings in $\mathcal{L}$ has a precoloring
extension, and a precoloring extension of $P$ can be constructed from
a precoloring extension of a member of $\mathcal{L}$ in polynomial
time.

Let $P=(G,L,S)$ be an $r$-seeded precoloring. A {\em type} is a non-empty
monochromatic
subset of $S$. Thus for every $b \in B(P)$, $N(b) \cap S$ is a type; we
call $N(b) \cap S$ the {\em type of $b$}. For $S' \subseteq S$ we
denote by $B(P,S')$ the set of all vertices of $B(P)$ whose type includes $S'$.
In what follows we will often need to  handle  each type of
$S$ separately, and so it is important that we keep track of the size of
the seed in every precoloring we consider.

\section{Nice and easy  precolorings}\label{sec:generaltonice}

A $r$-seeded precoloring $P$ is {\em nice} if 
no vertex of $B(P)$ is mixed on an edge of $W(P)$, and it is
{\em easy} if $G|(B(P) \cup W(P))$ is $P_6$-free.
    Our first goal is to show that an $r$-seeded precoloring
    $P$ can be replaced by an equivalent collection of precolorings
    each of which is either nice or easy, such that the size of the
    collection is polynomial, and the size of the seed of each of its members is bounded by a function of $|S(P)|$.
    For a precoloring extension $c$ of $P$, we will define several
    ``characteristics'' of $c$. While we cannot
enumerate all precoloring extensions of $P$ (in polynomial time), it is
possible to enumerate all characteristics, and that turns out to be enough
for our purposes.

Thus let $P=(G,L,S)$ be an $r$-seeded precoloring and let $c$ be a precoloring
extension of $P$. 
For every $i \in \{1,2,3\}$
we define the hypergraph $H(P,i,c)$ as follows. The vertex set
 $V(H(P,i,c))=\{b \in B(P,i) \text{ : } c(b)=i\}$. Next we construct
the hyperedges. Let $K$  be the set of all  edges  $w_1w_2$ of $G$ with both ends
in $W(P)$ such that some vertex of $V(H(P,i,c))$ is mixed on
$\{w_1,w_2\}$. For every $e=w_1w_2 \in K$,  let $h(e)$ be the set of
attachments of $\{w_1,w_2\}$ in $V(H(P,i,c))$. Then
$\{h(e) \text{ : } e \in K  \}$ is the set of the hyperedges of
  $H(P,i,c)$.

\begin{theorem}
  \label{neatlemma}
  There is a function $f:\mathbb{N} \rightarrow \mathbb{N}$ with the following
  properties. 
  Let $r>0$ be an integer, $G$ a $(P_6+rP_3)$-free graph with no clique of size four,  $P=(G,L,S)$
  an $r$-seeded precoloring, and let $c$ be a coloring of $(G,L)$.
Write $M=2^{|S|}(r+6)$.
  Then for
  every $i \in \{1,2,3\}$ either
  \begin{packed_enum}
  \item there exists $X \subseteq V(H(P,i,c))$ with $|X| \leq f(M)$ where 
    for every edge $w_1w_2$ of $G|W(P)$ such that some vertex of
    $V(H(P,i,c))$ is mixed on $\{w_1,w_2\}$, at least  one of $w_1,w_2$ has a neighbor in $X$,
     or
  \item there exists a flock $F=\{a_1-b_1-c_1, \ldots, a_{M}-b_{M}-c_{M}\}$
    where for every $i$, $a_i \in V(H(P,i,c))$ and
    $b_i,c_i \in W(P)$, and
    such that every vertex of $V(H(P,i,c))$  has a neighbor in
    $\{b_i,c_i\}$
    for at most one value of $i$.
  \end{packed_enum}
\end{theorem}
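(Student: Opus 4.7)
Fix a color $i \in \{1,2,3\}$ and consider the hypergraph $H := H(P,i,c)$. Its vertex set $V(H) \subseteq B(P,i)$ is a stable set of $G$, because every vertex of $V(H)$ is assigned color $i$ by $c$, and by \ref{W} every component of $W(P)$ is a clique of size at most three. Take $X := V(H)$, $Y := W(P)$, $p := 3$, $q := 2$, and let $\mathcal{Z}$ consist of the two-element subsets $\{w_1, w_2\} \subseteq W(P)$ that are edges of $G$ on which some vertex of $V(H)$ is mixed; then the hypergraph produced by the construction preceding \ref{smalllambda} with these choices is precisely $H$. Applying \ref{nutau} yields $\tau(H) \leq f_{r,3,2}(\nu(H))$, and since $r + 6 \leq M$ this bound can be recast purely as a function of $M$.

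The dichotomy is then controlled by a threshold $N = N(M)$, to be fixed together with the target function $f$. In the regime $\nu(H) \leq N$, a minimum hitting set $X_0$ of $H$ has size at most $f(M)$, and by construction ``$X_0$ meets $h(e)$'' is the same statement as ``for the edge $w_1 w_2 \in K$ producing $h(e)$, some vertex of $X_0$ is adjacent to $w_1$ or $w_2$''; this is alternative (1). In the regime $\nu(H) > N$, fix a matching of pairwise disjoint hyperedges $h(e_1), \ldots, h(e_{\nu(H)})$. Since each clique-component of $W(P)$ contains at most $\binom{3}{2} = 3$ edges, at least $\nu(H)/3$ of the $e_j$ can be chosen from distinct components and so are pairwise vertex-disjoint in $W(P)$. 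For each surviving $e_j = b_j c_j$, pick $a_j \in h(e_j)$ mixed on $e_j$ (which exists by the definition of $K$) and relabel so that $a_j$ is adjacent to $b_j$ and not to $c_j$; the seagulls $\sigma_j = a_j - b_j - c_j$ are then pairwise vertex-disjoint, and the disjointness of the hyperedges $h(e_j)$ in $V(H)$ is precisely the ``each vertex of $V(H)$ is attached to at most one $\{b_j,c_j\}$'' condition of alternative (2).

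The main obstacle is to refine these $\sigma_j$'s into a pairwise anticomplete sub-collection of size $M = 2^{|S|}(r+6)$, thereby producing a flock. Here the $(P_6 + rP_3)$-free hypothesis must enter in an essential way. The plan is to pigeonhole the $\sigma_j$ by the monochromatic type $N(a_j) \cap S \subseteq S$ (at most $2^{|S|}$ choices, accounting for that factor in $M$) in order to obtain a large same-type subcollection, and then, by a Ramsey-style argument, to peel off pairwise anticomplete seagulls from it: a seagull anticomplete to the $P_6 + (r-1)P_3$ already present in $G|S$ would directly produce an induced $P_6 + rP_3$, and more generally, excessively many pairs of $\sigma_j$'s joined by a $G$-edge can be chained through the seed to force a forbidden $P_6 + rP_3$. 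Making this Ramsey step precise, and thereby pinning down a threshold $N$ and an explicit function $f$ compatible with it, is the principal technical burden of the lemma.
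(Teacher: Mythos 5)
Your setup is the paper's: you build the same auxiliary hypergraph, check it fits the hypotheses of \ref{smalllambda}/\ref{nutau} (the vertex set is stable because all of $V(H(P,i,c))$ gets color $i$ under the proper coloring $c$), and the hitting-set case is handled exactly as in the paper. The problem is the second case: you stop short of proving it, declaring that extracting a pairwise anticomplete subfamily of size $M$ is ``the principal technical burden'' and sketching a Ramsey/pigeonhole-over-types argument through the seed using $(P_6+rP_3)$-freeness. That step is left open, so as submitted the proof is incomplete --- and, more importantly, the missing step does not require any such argument. The flock property already follows from what you have: (i) any two wings $a_j,a_k\in V(H(P,i,c))$ are nonadjacent simply because both receive color $i$ in the proper coloring $c$; (ii) $a_j$ has no neighbor in $\{b_k,c_k\}$ for $k\neq j$, since otherwise $a_j$ would be an attachment of $\{b_k,c_k\}$ and hence lie in $h(e_j)\cap h(e_k)$, contradicting that the hyperedges form a matching; and (iii) the pairs $\{b_j,c_j\}$ and $\{b_k,c_k\}$ are disjoint and anticomplete once the edges $e_j$ are taken in distinct components of $W(P)$ --- which your own factor-$3$ observation already provides (and costs only replacing $f(M)$ by $f(3M)$, which the statement tolerates). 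So $(P_6+rP_3)$-freeness enters this lemma only through \ref{smalllambda}/\ref{nutau}, not through any chaining of seagulls via the seed.

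Your reading of the constant $M=2^{|S|}(r+6)$ is also off: the factor $2^{|S|}$ is not ``accounted for'' in this lemma at all. The pigeonholing of wings by their type $N(a)\cap S$ is done later, in the proof of \ref{generaltonice} (where a big characteristic is refined to $r+6$ seagulls whose wings share a type); here $M$ is just an arbitrary target size. For comparison, the paper's own proof of \ref{neatlemma} is essentially your first two paragraphs with the matching taken of size $M$ directly; it does not even perform the passage to distinct components of $W(P)$, so your factor-$3$ refinement is the one genuinely useful addition in your write-up --- it is exactly what is needed to guarantee the ``pairwise disjoint and anticomplete'' part of the flock definition when two matched edges might share a vertex of a triangle of $W(P)$. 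Had you followed that observation with points (i) and (ii) above instead of invoking a Ramsey step, the proof would be complete.
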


\begin{proof}
By \ref{W} every component of $W$ is a clique of size at most three.
  Let $f=f_{r,3,2}$  be as in \ref{nutau}.
  Applying \ref{nutau} to $H=H(P,i,c)$ with $p=3$ and $q=2$, we deduce that
  either $\nu(H) \geq M$ or $\tau(H) \leq f(M)$.

  Suppose first  that $\nu(H) \geq M$. Let $b_1c_1, \ldots, b_Mc_M$ be
  edges of $G|W$
  such that $M=\{h(b_1c_1), \ldots, h(b_{M},c_{M})\}$ is a
  matching of $H$.
  It follows from the definition of $h(b_jc_j)$ (using symmetry) that
  for every $j$ there exists $a_j \in V(H(P,i,c))$ such that $a_j-b_j-c_j$ is a
  seagull. Moreover, since $M$ is a matching of $H$, no $v \in V(H(P,i,c)$
  belongs to more than one  $h(b_jc_j)$, and therefore every vertex of
  $V(H(P,i,c))$  has a neighbor in   $\{b_j,c_j\}$
  for at most one value of $j$. This proves that  if $\nu(H) \geq M$, then
  \ref{neatlemma}.2 holds.

  Thus we may assume that $\tau(H) \leq f(M)$. Letting
  $X \subseteq V(H(P,i,c))$ be a hitting set for $H$, we immediately see that
  \ref{neatlemma}.1 holds.
\end{proof}

Given an $r$-seeded precoloring $P$,
$i \in \{1,2,3\}$ and  a precoloring
extension $c$ of $P$,
we say that an {\em $M$-characteristic}  of $P,i,c$ (denoted by
$char_M(P,i,c)$) is $X$ if
\ref{neatlemma}.1 holds for $P,i$  and $c$,  and $F$ if
\ref{neatlemma}.2
holds for $P,i$ and $c$.  We denote by $V(char_M(P,i,c))$ the
set of all the vertices involved in $char_M(P,i,c)$.

We also need a version of the hypergraph above for each type, as follows.
For every type $T \subseteq S$ and every $i \in \{1,2,3\}$
we define the hypergraph $H(P,T,i,c)$. The vertex set
 $V(H(P,T,i,c))=\{b \in B(P,T) \text{ : } c(b)=i\}$. Next we construct
the hyperedges. Let $K$  be set of all  edges  $w_1w_2$ of $G$ with both ends
in $W(P)$ such that some vertex of $V(H(P,T,i,c))$ is mixed on
$\{w_1,w_2\}$. For every $e=w_1w_2 \in K$,  let $h(e)$ be the set of
attachments of $\{w_1,w_2\}$ in $V(H(P,T,i,c))$. Then
$\{h(e) \text{ : } e \in K  \}$ is the set of the hyperedges of
  $H(P,T,i,c)$.

\begin{theorem}
  \label{neatlemmatype}
  There is a function $f:\mathbb{N} \rightarrow \mathbb{N}$ with the following
  properties.
  Let $r>0$ be an integer, $G$ a $(P_6+rP_3)$-free graph with no clique of size four,  $P=(G,L,S)$
  an $r$-seeded precoloring and let $c$ be a precoloring of $G$. Then for
  every type $T$ of $S$ either
  \begin{packed_enum}
  \item there exists $X \subseteq V(H(P,T,i,c))$ with $|X| \leq f(2)$ where
    for every edge $w_1w_2$ of $G|W(P)$ such that some vertex of
    $V(H(P,T,i,c))$ is mixed on $\{w_1,w_2\}$, at least  one of $w_1,w_2$ has a neighbor in $X$,
  \item there exists a flock $F=\{a_1-b_1-c_1, a_2-b_2-c_2\}$
    with $a_1,a_2 \in V(H(P,T,i,c))$, and
    $b_1,b_2,c_1,c_2 \in W(P)$.
  \end{packed_enum}
\end{theorem}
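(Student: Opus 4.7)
The plan is to mirror the proof of \ref{neatlemma} almost verbatim, but with the matching-size threshold lowered from $M$ down to $2$, since we only need a flock of size two (and a hitting-set bound that is a constant depending only on $r$). First I would invoke \ref{W} to see that every component of $W(P)$ is a clique of size at most three, which is what allows the application of \ref{nutau} with $p = 3$ and $q = 2$ to the hypergraph $H = H(P,T,i,c)$; this yields $\tau(H) \leq f_{r,3,2}(\nu(H))$. I would then take $f = f_{r,3,2}$ (a monotone function of $\nu$ depending only on $r$) and split according to whether $\nu(H) \leq 1$ or $\nu(H) \geq 2$.

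In the case $\nu(H) \leq 1$, monotonicity gives a hitting set $X \subseteq V(H(P,T,i,c))$ with $|X| \leq f(1) \leq f(2)$; by the construction of the hyperedges of $H(P,T,i,c)$, every $W(P)$-edge on which some vertex of $V(H(P,T,i,c))$ is mixed contributes a hyperedge hit by $X$, so at least one of its endpoints has a neighbor in $X$, giving conclusion~(1). In the case $\nu(H) \geq 2$, I pick two $W(P)$-edges $b_1c_1$ and $b_2c_2$ whose hyperedges $h(b_1c_1)$ and $h(b_2c_2)$ are disjoint; by \ref{W} two vertex-disjoint edges of $G|W(P)$ cannot lie in the same component (a clique of size at most three), so $\{b_1,c_1\}$ and $\{b_2,c_2\}$ sit in different components and are therefore anticomplete in $G$. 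Within each hyperedge I then choose a mixed attachment $a_i \in V(H(P,T,i,c))$ so that, after possibly swapping $b_i$ and $c_i$, the triple $a_i-b_i-c_i$ is an induced $P_3$; disjointness of the hyperedges forces $a_1 \neq a_2$.

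The main obstacle is upgrading these two disjoint seagulls into a flock, i.e., arranging the choices so that the seagulls are pairwise anticomplete in $G$. The leverage is that both $a_1$ and $a_2$ lie in $B(P,T)$, so each is complete to the common type $T \subseteq S$, and $G|S$ already contains an induced $P_6 + (r-1)P_3$. If some cross-adjacency between the two candidate seagulls were unavoidable for every choice of $a_1, a_2$, I would combine that adjacency with at most one vertex of $T$ to produce an induced $P_6$ on a subset of $\{a_1,b_1,c_1,a_2,b_2,c_2\} \cup T$; natural candidates are $c_1-b_1-a_1-a_2-b_2-c_2$ when $a_1a_2 \in E(G)$, and paths of the form $c_1-b_1-a_1-t-a_2-b_2$ for a suitable $t \in T$ in the remaining sub-cases, exploiting the already-established anticompleteness between $\{b_1,c_1\}$ and $\{b_2,c_2\}$. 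Adjoining the $(r-1)P_3$ that survives in $G|S$ after removing the at most one vertex of $T$ used (there is room, since $G|S$ contains $P_6+(r-1)P_3$ and we remove only one vertex) would then exhibit an induced $P_6 + rP_3$ in $G$, contradicting the hypothesis. A short case analysis on which cross-adjacency occurs finishes the argument.
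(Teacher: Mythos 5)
Your first two paragraphs are exactly the paper's argument: \ref{W} gives that every component of $W(P)$ is a clique of size at most three, \ref{nutau} applied to $H(P,T,i,c)$ with $p=3$, $q=2$ gives the dichotomy between $\nu(H)\geq 2$ and $\tau(H)\leq f(2)$, and the hitting-set case is handled as you do. The divergence is your final paragraph, and that is where the argument fails. The contradiction you propose --- building a $P_6$ through $a_1,a_2$ (and possibly one vertex $t\in T$) and then ``adjoining the $(r-1)P_3$ that survives in $G|S$'' --- is not sound: $a_1,a_2\in B(P,T)$ are complete to $T$ and may have arbitrarily many further neighbours in $S$, so the $P_3$'s found inside $G|S$ need not be anticomplete to your six-vertex path, and no induced $P_6+rP_3$ is exhibited. (This is precisely why, when such contradictions are needed elsewhere, e.g.\ in the proof of \eqref{niceoreasy}, the extra $P_3$'s are extracted from large flocks of seagulls in $B\cup W$ whose attachments have been controlled, never from $S$ itself.) Moreover the case you spend most effort on cannot occur for a trivial reason you did not use: $a_1,a_2\in V(H(P,T,i,c))$ both receive colour $i$ under the proper coloring $c$, so $a_1a_2\notin E(G)$ automatically; together with disjointness of the hyperedges (which gives that $a_1$ has no neighbour in $\{b_2,c_2\}$ and $a_2$ none in $\{b_1,c_1\}$), this is all the paper needs, and its proof of the matching case is correspondingly two lines with no $P_6+rP_3$ analysis at all.

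One further point: you assert that $b_1c_1$ and $b_2c_2$ are vertex-disjoint, but disjointness of $h(b_1c_1)$ and $h(b_2c_2)$ does not rule out the two edges sharing a vertex of a triangle component of $W(P)$, in which case the two seagulls are neither disjoint nor anticomplete and your appeal to \ref{W} collapses. (The paper's own write-up is silent on this too, so I flag it mainly because you explicitly lean on vertex-disjointness to place the two edges in different components.) It can be repaired cheaply: three pairwise disjoint non-empty hyperedges cannot all arise from edges of a single triangle of $W(P)$, so either $\nu(H)\leq 2$, in which case \ref{nutau} already gives $\tau(H)\leq f(2)$ and outcome~1 holds, or one can choose two matching edges lying in distinct components of $W(P)$, which are then vertex-disjoint and anticomplete, and the seagulls read off as above form the required flock.
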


\begin{proof}
By \ref{W} every component of $W$ is a clique of size at most three.  Let $f=f_{r,3,2}$  be as in \ref{nutau}.
  Applying \ref{nutau} to $H=H(P,T,i,c)$ with $p=3$ and $q=2$, we deduce that
  either $\nu(H) \geq 2$ or $\tau(H) \leq f(2)$.
  Suppose first  that $\nu(H) \geq 2$. Let $b_1c_1, b_2,c_2$ be
  edges of $G|W$
  such that $M=\{h(b_1c_1), h(b_2c_2)\}$ is a matching of $H$.
  It follows from the definition of $h(b_ic_i)$ (using symmetry)  that
  for every $i$ there exists $a_i \in V(H(P,T,i,c))$ such that
  $a_i-b_i-c_i$ is a
  seagull, and
  \ref{neatlemmatype}.2 holds.
  Thus we may assume that $\tau(H) \leq f(2)$. Letting
  $X \subseteq V(H(P,T,i,c))$ be a hitting set for $H$, we immediately see that
  \ref{neatlemmatype}.1 holds.
\end{proof}

Given an $r$-seeded precoloring $P$, a type $T$ of $S(P)$,
$i \in \{1,2,3\}$ and  a precoloring
extension $c$ of $P$,
we say that an {\em $2$-characteristic}  of $P,T,i,c$ (denoted by
$char_2(P,T,i,c)$) is $X$ if
\ref{neatlemmatype}.1 holds for $P,T,i$  and $c$,  and $F$ if \ref{neatlemmatype}.2
holds for $P,T,i$ and $c$.  We denote by $V(char_2(P,T,i,c))$ the
set of all the vertices involved in $char_2(P,T,i,c)$.

We can now prove the main result of the section.

\begin{theorem}
  \label{generaltonice}
  There exists a  function $g_1: \mathbb{N} \rightarrow \mathbb{N}$
   with the following properties.
  Let $G$ be a $(P_6+rP_3)$-free graph with no clique of size four, and let $P=(G,L,S)$ be  $r$-seeded
  precoloring of $G$. There is a collection $\mathcal{L}$
  of  $r$-seeded precolorings such that
  \begin{packed_enum}
  \item $\mathcal{L}$ is equivalent to $P$
\item every $P' \in \mathcal{L}$ is either nice or easy
  \item $|S(P')| \leq g_1(|S(P)|)$ for every $P' \in \mathcal{L}$
  \item $|\mathcal{L}| \leq |V(G)|^{g_1(|S|)}$
  \end{packed_enum}
  Moreover, given $P$, the collection $\mathcal{L}$ can be constructed in
  time $O(|V(G)|^{g_1(|S|)})$.
\end{theorem}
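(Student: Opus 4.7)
The plan is to enumerate ``characteristics'' of a hypothetical precoloring extension $c$ of $P$ via Lemmas \ref{neatlemma} and \ref{neatlemmatype}, and to move the guessed characteristics into the seed. For each color $i \in \{1,2,3\}$, the $M$-characteristic $char_M(P,i,c)$ is either a hitting set $X_i$ of size at most $f(M)$ (with $M=2^{|S|}(r+6)$), or a flock of $M$ seagulls. For each type $T$ of $S$ and each color $i$, the 2-characteristic $char_2(P,T,i,c)$ is either a hitting set $Y_{T,i}$ of size at most $f(2)$, or a flock of two seagulls. The total vertex count across all characteristics is a function of $|S|$ alone, so the number of characteristic vectors to enumerate is at most $|V(G)|^{g_1(|S|)}$ for an appropriate $g_1$.

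For each enumerated guess I will construct $P'=(G,L',S')$ by moving every vertex of a guessed hitting set into the seed with its prescribed color, and every vertex of a guessed flock into the seed with a color drawn from an additional constant-size enumeration (each anchor $a_j$ gets color $i$; each wing gets one of three colors), then updating $L$ exhaustively. By construction $W(P') \subseteq W(P)$, and if $c$ extends $P$ and the guess matches the actual characteristics of $c$, then $c$ extends $P'$; conversely, every extension of $P'$ extends $P$.

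I will then argue each $P'$ is nice or easy. If every $M$-characteristic is a hitting set, then $P'$ will be nice: for $b \in B(P) \cap B(P')$ with $c(b)=i$ mixed on $w_1 w_2 \in W(P')$, the hitting-set property of $X_i$ forces one of $w_1, w_2$ to have a neighbor in $X_i$, hence to acquire a color-$i$ seed-neighbor in $P'$ and exit $W(P')$; an analogous per-type argument using the $Y_{T,i}$'s will handle the vertices of $B(P') \setminus B(P)$, which are those vertices that moved in from $W(P)$ upon updating. Otherwise some $M$-characteristic is a flock of $M$ seagulls; pigeonhole on the at most $2^{|S|}$ types yields $r+6$ pairwise-anticomplete seagulls of a common type $T$, and I will conclude $P'$ is easy by deriving a contradiction with $(P_6+rP_3)$-freeness from any hypothetical $P_6$ in $G|(B(P')\cup W(P'))$. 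The hard part will be this last structural step: at most six of the $r+6$ same-type seagulls can share a vertex with a $P_6$, but showing that the remaining $r$ are also anticomplete to it needs the flock's mutual anticompleteness, the constraint from \ref{neatlemma} that each vertex of $V(H(P,i,c))$ has a neighbor in at most one wing pair $\{b_j,c_j\}$, and the fact that seagull bodies and far wings are anticomplete to $S$ entirely while the anchors attach only within $T \subseteq S$.
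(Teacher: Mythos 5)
There is a genuine gap, in two places. First, your construction of $P'$ is only ``move the guessed vertices to the seed and update exhaustively,'' and your argument that $P'$ is nice is conditioned on the unknown extension: you assume $c(b)=i$ for the offending boundary vertex $b$ and then invoke the hitting-set property of $X_i$. That reasoning shows at most that a vertex mixed on a wilderness edge cannot receive color $i$ in an extension matching the guess; it does not show that the constructed precoloring $P'$ is nice, which is a structural property that must hold for \emph{every} member of $\mathcal{L}$ (including those built from wrong guesses), since the downstream theorems consume each member on its own. The paper closes exactly this hole by an extra list-modification step: when $X_i$ is small it deletes $i$ from the list of any $b\in B(\tilde P,i)\cap B(P)$ mixed on an edge of the new wilderness, and when $X_i$ is big it deletes $i$ from any $b$ meeting more than one seagull; \ref{neatlemma} is what certifies these deletions are safe for the correct guess, and the deletions are what yield the structural facts (the analogues of \eqref{smallguesslist}, \eqref{Q} and \eqref{smallguess}) used later. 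Without that step your precolorings are in general neither nice nor easy, and you have not proposed any filtering or repair.

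Second, and more seriously, the easy case as you sketch it does not go through. From a single big flock of $r+6$ same-type $i$-seagulls you cannot conclude that a hypothetical $P_6$ in $G|(B(P')\cup W(P'))$ misses $r$ of them: the ``at most one seagull'' constraint of \ref{neatlemma}.2 applies only to vertices of $V(H(P,i,c))$, i.e.\ boundary vertices that the (unknown) extension colors $i$, whereas the $P_6$ may pass through vertices whose lists do not contain $i$ at all (the sets $B_{jk}$), and such vertices may be adjacent to arbitrarily many of the seagulls — nothing in your guessed data forbids it. This is precisely the configuration the paper has to fight in \eqref{niceoreasy}: its dichotomy is per \emph{type}, not per color (nice when every type has some small type-characteristic), and in the easy case it first uses the type-level characteristics and \eqref{bigsmall} to produce \emph{two} big flocks, in two different colors, anchored at a common type $D$ with $L(T_1)=L(T_2)=L(D)$; it then uses the per-type non-mixing fact \eqref{smallguess} to show the offending vertex $p_q\in B_{13}$ meeting two seagulls of the color-$2$ flock is not mixed on their edges, builds the auxiliary path $z_1-y_1-x_1-p_q-x_2-y_2$, and only then obtains the contradiction by playing this new $P_6$ against the \emph{other} (color-$1$) flock. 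Your proposal enumerates the type characteristics but never uses them in this way, and the single-flock contradiction you plan to derive is not available from the constraints you list.
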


\begin{proof}
  Let $f$ be as in \ref{neatlemma} and let  $M=2^{|S|}(r+6)$.
For an $r$-seeded precoloring $P'$ of $(G,L)$ and for $i \in \{1,2,3\}$
  let $smallguess(P',i)$ be the set of all subsets of
  $B(P',i)$ of size at most $f(M)$, and $bigguess(P',i)$ be the set
  of all flocks of size $M$ such that every seagull of the flock
is $(B(P',i),W(P'))$-seagull.
Let $guess(P',i)=smallguess(P',i) \cup bigguess(P',i)$.
Thus $guess(P',i)$ is the set of all possible $M$-characteristics of a
precoloring extension of $P'$.
We say that $X_i \in guess(P',i)$ is {\em small} if
  $X_i \in smallguess(P',i)$ and that $X_i$ is {\em  big} if
  $X_i \in bigguess(P',i)$. If $X_i$ is big, we denote
  by $U_{i}$ the set of the wings of the flock that are contained in
  $B(P',i)$, by $W_{i}$ the vertices of the set of the bodies and the
  wings of the flock that are contained in $W(P')$, and write
  $V(X_i)=V_{i}=U_{i} \cup W_{i}$.  If $X_{i}$ is small, we write
  $V(X_i)=V_{i}=U_{i}=X_{i}$, and $W_{i}=\emptyset$. 
Thus in both cases $W_i=V(X_i) \cap W(P')$ and $U_i=V(X_i) \cap B(P')$.
  A precoloring $c$ of $(G|V(X_i), L)$ is {\em $i$-consistent} if
  $c(v)=i$ for every $v \in U_{i}$.
  
Let $X_i \in guess(P',i)$, and let
 $c$ be  an $i$-consistent precoloring of
  $(G|V_i,L(P'))$.
 We define the $r$-seeded precoloring $P'(X_i,c)$. Let
 $\tilde{P}=(G,L',S')$ be obtained from $P'$ by moving $X_i$ to the seed with
 $c$. Next we modify $L'$ further.
   \begin{packed_itemize}
 \item Assume first that $X_{i}$ is small.
   If $b \in B(\tilde{P},i) \cap B(P')$ and
 $b$ is  mixed on an edge
   of $W(\tilde{P})$, remove $i$ from  $L'(b)$.
 \item Next assume that $X_{i}$ is big. Let
   $X_{i}=\{a_1-b_1-c_1, \ldots, a_{M}-b_{M}-c_{M}\}$ where
   $U_i=\{a_1, \ldots, a_M\}$.
   If $b \in B(\tilde{P},i) \cap B(P')$ and $b$ has
   a neighbor in $\{b_q,c_q\}$ for more than one value of $q$, remove $i$
   from $L'(b)$.
 \end{packed_itemize}
   Let $P'(X_i,c)$ be the $r$-seeded precoloring thus obtained. Note that
   given $P'$, the $r$-seeded precoloring $P'(X_i,c)$ can be constructed in
   polynomial time.

 We now proceed as follows. For every $X_1 \in guess(P,1)$ and
 every $1$-consistent  coloring $c_1$ of $(G|V(X_1),L(P))$
 construct $P(X_1,c_1)$ as above. Now for every $X_2 \in guess(P(X_1,c_1),2)$
 and every $2$-consistent  coloring of $(G|V(X_2),L(P(X_1,c_1)))$ construct
 $P(X_1,c_1)(X_2,c_2)$ as above. Finally, for every $X_3 \in guess(P(X_1,c_1)(X_2,c_2),3)$ and every $3$-consistent coloring $c_3$ of $(G|V(X_3),L(P(X_1,c_1)(X_2,c_2)))$
 construct $P(X_1,c_1)(X_2,c_2)(X_3,c_3)$. Let
 $\mathcal{Q}$ be the set of triples $X=(X_1,X_2,X_3)$ where $X_1,X_2,X_3$ are
 as above.
 Write $V(X)=V_1 \cup V_2 \cup V_3$. A coloring $c$ of $V(X)$
 is {\em consistent} if $c_i=c|X_i$ is $i$-consistent.
 For every $X \in \mathcal{Q}$ and every consistent coloring
 of $G|V(X)$, let $Q_{X,c}=P(X_1,c_1)(X_2,c_2)(X_3,c_3)$.

We now list several properties of $Q_{X,c}$.

\begin{equation}
  \label{Q}
   \longbox{ 
     Let $i \in \{1,2,3\}$.
     \begin{packed_enum}
       \item $W(Q_{X,c})$ is anticomplete to $V(X)$.
   \item if $b \in B(Q_{X,c},i)$ then $b$ is anticomplete to $U_i$.
    \item if $b \in B(Q_{X,c},i)$ and $X_i$ is big, then $b$ has
      neighbors in at most one of the seagulls of $X_i$.
  \end{packed_enum}}
 \end{equation}
The first two statements of \eqref{Q} follow from the fact that
$V(X) \subseteq S(Q_{X,c})$. By the second bullet of the construction process of
$Q_{X,c}$, we deduce that
 if $X_i$ is big and  $b$ has neighbors in more than  one of the seagulls of
 $X_i$, then $i$ is removed from the list of $b$; thus the third statement of
 \eqref{Q} follows. This proves \eqref{Q}.

 \begin{equation}
   \label{acceptable}
   \longbox{ 
     Let $i,j \in \{1,2,3\}$  such that $i<j$. Then for every
     $X=(X_1,X_2,X_3) \in \mathcal{Q}$ where $X_j$ is big, the following hold.
  \begin{packed_itemize}
    \item $W_j$ is anticomplete to $V_i$, 
    \item if $u \in U_j$ and  $i \in L(P)(u)$, then $u$ is anticomplete to
      $U_i$, and
    \item if $u \in U_j$ and $i \in L(P)(u)$ and $X_i$ is big, then $u$ has
      neighbors in at most one of the seagulls of $X_i$.
  \end{packed_itemize}}
 \end{equation}
 Let $\tilde{P}$ denote the precoloring for which
 $X_j \in guess(\tilde{P},j)$. Then
 $V_i \in S(\tilde{P})$, and therefore $W_j$ is anticomplete to $V_i$,
 thus the first statement of \eqref{acceptable} holds.
 Next we prove the second and third statements.
 Let $u \in U_j$, then $u \in B(\tilde{P},j)$. Since $\tilde{P}$ was obtained
 from $P$ by moving vertices to the seed, it follows that
 $u \in B(P) \cup W(P)$. Recall that  by \ref{W} each component of
 $W(P)$ is a clique. Since $X_j$ is big, it follows that $u$ is mixed on an
 edge of $W(P)$, and therefore $u \in B(P)$. 
 Since $u \in B(\tilde{P},j) \cap B(P)$, it follows that
 $L(P)(u)=L(\tilde{P})(u)$.
 Assume that $i \in L(P)(u)$. Then $L(P)(u)=L(\tilde{P})(u)=\{i,j\}$.

 It follows that  $u$ has no neighbor in $S(\tilde{P})$ with color $i$.
 Since   $U_i \subseteq S(\tilde{P}) \subseteq S(Q_{X,c})$ and
 $L(\tilde{P})(U_i)=i$,
 the second statement of \eqref{acceptable} follows.
 By the second bullet of the construction process of $Q_{X,c}$, we deduce that
 if $X_i$ is big and  $u$ has neighbors in more than  one of the seagulls of
 $X_i$, then $i$ is removed from the list of $u$ during the process of constructing $\tilde{P}$; thus the third statement of
 \eqref{acceptable} follows. This proves \eqref{acceptable}.

\begin{equation}
   \label{smallguesslist}
\longbox{ Let $i \in \{1,2,3\}$.
     If $X_{i}$  is small, then no vertex  $b \in B(Q_{X,c},i)$
   is mixed on an edge of  $W(Q_{X,c})$.}
   \end{equation}
 Suppose $b \in B(Q_{X,c},i)$   
 is mixed on an edge of $W(Q_{X,c})$. Since $Q_{X,c}$ is obtained from
 $P$ by moving a set of vertices to the seed, it follows that
 $B(Q_{X,c}) \subseteq B(P) \cup W(P)$.
  Since by \ref{W} every component of $W(P)$ is a clique, it follows
 that no vertex of $B(Q_{X,c}) \setminus B(P)$ is mixed on an edge of $W(P)$,
 and therefore $b \in B(P)$. Consequently, $L(P)(b)=L(Q_{X,c})(b)$.
  However, in the construction process of $L(Q_{X,c})$, $i$ would be
 removed from the list of $b$, and thus $i \not \in L(Q_{X,c})(b)$,
   a contradiction. This proves~\eqref{smallguesslist}.

\bigskip

      Let $\mathcal{M}$ be the collection of all precolorings
      $Q_{X,c}$ as above. For every $Q_{Y,d} \in \mathcal{M}$ we proceed as
      follows. Write $Y=(Y_1,y_2,Y_3)$, $P'=Q_{Y,d}$ and $P'=(G,L',S')$.

  For every type $T$ of $S(P)$, and every $i \in \{1,2,3\} \setminus L(T)$,
  let $smallguess(T,i)$ be the set of all subsets of
  $B(P,T)$ of size at most $f(2)$ (here $f$ is as in \ref{neatlemmatype}), and
  $bigguess(T,i)$ be the set
  of all flocks of size $2$ such that every seagull of the flock
  is a $(B(P,T),W(P'))$-seagull. Please note that here we are referring to
  types of $P$, and not of $P'$.
  Let $guess(T,i)=smallguess(T,i) \cup bigguess(T,i)$.
  Let $\mathcal{T}$ be the set of all types of $S(P)$, say $|\mathcal{T}|=t$.
  Now let $\mathcal{C}$ be the set of all $2t$-tuples
  $X=(X_{T,i})$ where $T \in \mathcal{T}$, $i \in \{1,2,3\} \setminus L(T)$ and
  $X_{T,i} \in guess(T,i)$. We say that $X_{T,i}$ is {\em small} if
  $X_{T,i} \in smallguess(T,i)$ and that $X_{T,i}$ is {\em  big} if
  $X_{T,i} \in bigguess(T,i)$. If $X_{T,i}$ is big, we denote
  by $U_{T,i}$ the set of the wings of the flock that are contained in
  $B(P,T)$, by $W_{T,i}$ the vertices of the set of the bodies and the
  wings of the flock that are contained in $W(P')$, and write
  $V_{T,i}=U_{T,i} \cup W_{T,i}$.  If $X_{T,i}$ is small, we write
  $V_{T,i}=U_{T,i}=X_{T,i}$, and $W_{T,i}=\emptyset$.    Finally, let
  $V(X)=\bigcup_{T,i}V_{T,i}$.

  A precoloring $c$ of $(G|V(X), L')$ is {\em consistent} if
  $c(v)=i$ for every $v \in U_{T,i}$.
  Let $X \in \mathcal{C}$ and let $c$ be a consistent precoloring of
  $(G|V(X),L')$.
 We construct the $r$-seeded precoloring $P_{X,c}$. Let
 $P''=(G,L'',S'')$ be obtained from $P'$ by moving $V(X)$ to the seed with
 $c$. Next we modify $L''$ further.
 For every   $T \in \mathcal{T}$ and $i \in \{1,2,3\} \setminus L(T)$,
 proceed as follows.
  \begin{packed_itemize}
 \item Assume  that $X_{T,i}$ is small.
   If $b \in B(P'')$ and $T \subseteq N(b) \cap S$, and $b$ is  mixed on an
   edge of $W(P'')$, remove $i$ from  $L''(b)$.
 \end{packed_itemize}

 Let $P_{X,c}$ be the $r$-seeded precoloring thus obtained.
 Using \eqref{smallguesslist} we immediately deduce:
 
 \begin{equation}
   \label{bigsmall}
   \longbox{Let $T \in \mathcal{T}$ and let
     $\{i,j\} = \{1,2,3\} \setminus L(T)$.
     If  $X_{T,i}$ is big, then both $Y_i$ and $Y_j$ are big.}
 \end{equation}

 Next we prove a statement similar to \eqref{smallguesslist}.
 
 \begin{equation}
   \label{smallguess}
\longbox{ Let  $T \in \mathcal{T}$ and let $i \in \{1,2,3\} \setminus L(T)$.
     If $X_{T,i}$  is small, then no vertex  $b \in B(P_{X,c})$ such that 
   $T \subseteq N(b) \cap S(P_{X,c})$ is mixed on an edge of  $W(P_{X,c})$.}
   \end{equation}
 Suppose $b \in B(P_{X,c})$ with $T \subseteq N(b) \cap S(P_{X,c})$
 is mixed on an edge of $W(P_{X,c})$. Since $P_{X,c}$ is obtained from
 $P$ by moving a set of vertices to the seed, it follows that
 $B(P_{X,c}) \subseteq B(P) \cup W(P)$.
  Since by \ref{W} every component of $W(P)$ is a clique, it follows
 that no vertex of $B(P_{X,c}) \setminus B(P)$ is mixed on an edge of $W(P)$,
 and therefore $b \in B(P)$ and
 $L(P_{X,c})(b)=L(b)=\{1,2,3\} \setminus L(T)$. In
 particular $i \in L(P_{X,c}(b))$.
  However, in the construction process of $L(P_{X,c})$, $i$ would be
 removed from the list of $b$, 
   a contradiction. This proves~\eqref{smallguess}.

   \begin{equation}
     \label{niceoreasy}
   P_{X,c} \text{ is nice or easy.}
 \end{equation}
Write $B=B(P_{X,c})$ and $W=W(P_{X,c})$.  For $i,j \in \{1,2,3\}$ let
   $B_{i,j}=\{b \in B \text { : } L(P_{X,c})(b)=\{i,j\}\}$.
   Suppose first that  for every $T \in \mathcal{T}$ there exists
 $i \in \{1,2,3\} \setminus L(T)$ such that  $X_{T,i}$ is small.
 Then by \eqref{smallguess} no vertex of $B(P_{X,c})$ is mixed on an edge of
 $W$,   and therefore
 $P_{X,c}$ is nice.

 Thus we may assume that there exist $\{i,j,k\}=\{1,2,3\}$ and
 $D \in \mathcal{T}$ such that $L(D)=k$ and both
$X_{D,i}$ and $X_{D,j}$ are big. By \eqref{bigsmall} 
both $Y_i$ and $Y_j$ are big.  Since $M=2^{|S|}(r+6)$ and
there are at most $2^{|S|}$ types in $S$, it follows that
there exist
$T_i,T_j \subseteq S(P)$ such that
     $U_i \cap B(P,T_i) \geq r+6$, and
     $U_j \cap B(P,T_j) \geq r+6$.   
   We now show that $P_{X,c}$ is easy. We may assume that $i=1$ and $j=2$,
   and that $G|(B \cup W)$ contains a
   six-vertex path $R=p_1-p_2-p_3-p_4-p_5-p_6$.

Let $Y_1=\{a_1-b_1-c_1, \ldots, a_M-b_M-c_M\}$ and $Y_2=\{x_1-y_1-z_1, \ldots, x_M-y_M-z_M\}$. 
   We may assume that   $U_1 \cap B(P,T_1) =\{a_1, \ldots, a_{r+6}\}$ and  
    $U_2  \cap B(P,T_2)=\{x_1, \ldots, x_{r+6}\}$.
   Let $Y_1'=\{a_1-b_1-c_1, \ldots, a_{r+6}-b_{r+6}-c_{r+6}\}$ and let
   $Y_2'=\{x_1-y_1-z_1, \ldots, x_{r+6}-y_{r+6}-z_{r+6}\}$.

First we show that if $D' \in \mathcal{T}$ and 
$L(D') \neq 1$, and $X_{D',1}$ is big, then $D' \subseteq T_1$.
Suppose that there exists $s \in D'  \setminus T_1$.
Let $X_{D',1}=\{s_1-t_1-r_1, s_{2}-t_{2}-r_{2}\}$
   where $U_{D',1}=\{s_1,  s_2\}$.
Then $R'=t_2-s_2-s-s_1-t_1-r_1$ is a $P_6$.
Since $s \in S(P)  \setminus T_1$, it follows that $s$ has no neighbors in
the seagulls of $Y_1'$.
   By \eqref{Q}.1  $\{t_1,t_2,r_1\}$ is anticomplete to $V_1$.
   By \eqref{Q}.2 $\{s_1,s_2\}$ is anticomplete to    $U_1$.
   By \eqref{Q}.3      each of  $s_1,s_2$ has neighbors in at most one seagull
   of $Y_1'$, and thus
   $V(R')$ is anticomplete to at least
   $r+4$ seagulls of $Y_1'$, contrary to the fact that $G$ is $(P_6+rP_3)$-free.
   This proves that   $D'  \subseteq T_1$.

   By the claim of the previous paragraph with $D'=D$ and $i=1,2$,
   we deduce that $D \subseteq T_1 \cap T_2$. Consequently,
   $L(T_1)=L(T_2)=L(D)=3$.
   
 Recall that
   $V(Y_1) \cup V_{D,1} \cup V_{D,2} \subseteq S(P_{X,c})$. Therefore
   $V(R) \cap W$ is anticomplete to $V(Y_1) \cup V_{D,1} \cup V_{D,2}$
   $V(R) \cap (B_{12} \cup B_{13})$ is anticomplete
   to $U_1 \cup U_{T,1}$ and  $V(R) \cap (B_{12} \cup B_{23})$
   is anticomplete to $U_2 \cup U_{T,2}$. 
   
   By \eqref{Q}.3,
      every vertex of $B_{12} \cup B_{13}$  has neighbors in
   at most one seagull of $Y_1'$, and every vertex of
   $B_{12} \cup B_{23}$ has neighbors  in at most
   one seagull of $Y_2'$. If every vertex of $V(R)$ has
   neighbors in at most one of the seagulls of $Y_2'$, then 
   at least $|Y_2'|-6 \geq r$ of the seagulls in
   $Y_2'$ are anticomplete to $V(R)$, contrary to the fact that $G$
   is $(P_6+rP_3)$-free. This proves that some vertex $p_q \in V(R)$
   has neighbors in at least two of the seagulls of $Y_2'$.   It follows
   that $V(R) \cap B_{13} \neq \emptyset$, and we may assume
   $p_q \in V(R) \cap B_{13}$ has a neighbor in
   $x_1-y_1-z_1$ and in $x_2-y_2-z_2$.
   Since $c(x_1)=c(x_2)=2$,
   it follows that $c(y_1) \neq 2$ and $c(y_2) \neq 2$,  and so
   since $p_q \in B_{13}$, we deduce that $p_q$ is anticomplete
   to $\{y_1, y_2\}$.

   We claim that $p_q$ is not mixed on either of the the sets $\{y_1,z_1\}$,
   $\{y_2,z_2\}$.
   If $p_q \not \in B(P)$, this follows immediately from \ref{W}.
   Thus we may assume that $p_q \in B(P)$.
   Let $T'=N(p_q) \cap S(P)$.
   Since $c(T') \neq 3$, it follows that $T' \not \subseteq T_1$,
   and therefore   $X_{T',1}$ is small. Now the claim follows from
   \eqref{smallguess}.
   
   We deduce that
   $p_q$ is adjacent to $x_1,x_2$ and anticomplete to $\{y_1,z_1,y_2,z_2\}$.
   Now   $R'=z_1-y_1-x_1-p_q-x_2-y_2$ is a six-vertex path.
   By \eqref{acceptable}
   $\{x_1,y_1,z_1,x_2,y_2\}$ is anticomplete
   to $U_{1}$, $\{y_1,z_1,y_2\}$ is anticomplete to $V(Y_1)$,
   and each of $x_1,x_2$ has neighbors in at most one seagull of
   $Y_1'$.
   Since $p_q \in B_{13}$, \eqref{Q} implies that $p_q$ is anticomplete to $U_1$,
   and $p_q$ has neighbors in at most one  seagull in $Y_1'$. But now
   $R'$ is anticomplete to at least $|Y_1'|-3>r$ of the seagulls of $Y_1'$,
   contrary to the fact that $G$ is $(P_6+rP_3)$-free.
   This proves~\eqref{niceoreasy}.

   \begin{equation}
     \label{Ssize}
     |S(P_{X,c})| \leq |S|+3 \times \max (3M, f(M))+2^{|S|+1} \times  \max(6,f(2))
   \end{equation}
   First observe that $S(Q_{Y,d}) \setminus S=V(Y_1) \cup V(Y_2) \cup V (Y_3)$,
   and for every $i$, $|V(Y_i)| \leq \max(3M, f(M))$.
This implies that $|S(Q_{Y,d})| \leq |S|+3 \times \max (3M, f(M))$.
   The number of possible  pairs $(T,i)$ where
 $T \in \mathcal{T}$ and $i \in \{1,2,3\} \setminus L(T)$ is 
 $2t \leq 2^{|S|+1}$. For every such $(T,i)$,
   $|V_{T,i}| \leq \max(6,f(2))$, and therefore
   $|V(X)| \leq 2^{|S|+1} \times  \max(6,f(2))$.
   Since $S(P_{X,c})=S(Q_{Y,d}) \cup V(X)$, \eqref{Ssize} follows.
 
\bigskip

   Let  $\mathcal{L}(Q_{Y,d})$ be the collection of all $r$-seeded precolorings $P_{X,c}$
   where $X \in \mathcal{C}$ and $c$ is a consistent precoloring
   of $(G|V(X),L)$.
   
 \begin{equation}
   \label{L(P')size}
\longbox{   $|\mathcal{L}(Q_{Y,d})|$ is polynomial.} 
 \end{equation}
 The  number of possible  pairs $(T,i)$ where
 $T \in \mathcal{T}$ and $i \in \{1,2,3\} \setminus L(T)$ is
 $2t \leq 2^{|S|+1}$. For every such $(T,i)$,
 $|V_{T,i}| \leq \max(6,f(2))$, so there is a constant $C$
 that depends on $r$ but not on $G$, such that
 $|V_{T,i}| \leq C$. Consequently, for every
 $X \in \mathcal{C}$, $|V(X)| \leq 2tC$. It follows that
 $|\mathcal{C}| \leq |V(G)|^{2tC}$. Moreover, for every
 $X \in \mathcal{C}$, the number of precolorings of $(G|V(X),L)$ is at most
 $3^{|V(X)|} \leq 3^{2tC}$.
 Since $|\mathcal{L}(Q_{Y,d})|$ is at most the total number of pairs
 $(X,c)$  where  $X \in \mathcal{C}$ and $c$ is a precoloring
 of $(G|V(X),L)$, we deduce that
 $$|\mathcal{L}(Q_{Y,d})| \leq 3^{2tC}|\mathcal{C}| \leq 3^{2tC}|V(G)|^{2tC}.$$
 This proves~\eqref{L(P')size}.

 \bigskip
 
 Finally, let $\mathcal{L}=\bigcup_{P' \in \mathcal{M}}\mathcal{L}(P')$.

 \begin{equation}
   \label{Lsize}
   \longbox{$|\mathcal{L}|$ is polynomial}
 \end{equation}
 By \eqref{L(P')size} is is enough to prove that $|\mathcal{M}|$ is polynomial.
 We use the notation from the construction step of precolorings in
 $\mathcal{M}$. Let $C=\max(3M,f(M))$. 
Since $|V(X_i)| \leq \max(3M,f(M))=C$ for every $i$,
 the number of possible choices of $V(X)$ is at most $|V(G)|^{3C}$.
 The number of consistent colorings for of a given $X$ is at most
 $3^{|X|} \leq 3^{3C}$. It follows that
 $\mathcal{M} \leq (3|V(G)|)^{3C}$, as  required. This proves \eqref{Lsize}.

 \bigskip
 
 By \eqref{niceoreasy}, \eqref{Ssize}  and \eqref{Lsize}, it remains
 to show that $\mathcal{L}$ is equivalent to $P$. Since for
 every $P_{X,c} \in \mathcal{L}$, $c$ is a precoloring of $(G,L)$,
 it is clear that if some $P_{X,c}$ has a precoloring extension, then so
 does $P$. It remains to show that if $d$ is a precoloring extension of
 $P$, then some $R \in \mathcal{L}$ has a precoloring extension.

 Let $d$ be a precoloring extension of $P$. First we construct
 $Q \in \mathcal{M}$ that has a precoloring extension.
 Let $X_1=char_M(P,1,d)$. Then $X_1 \in guess(P,1)$ and $d$ is a $1$-consistent coloring of $V(X_1)$.
 Define the $r$-seeded precoloring $P(X_1,d)$ as follows. Let
 $\tilde{P}=(G,L',S')$ be obtained from $P$ by moving $V(X_1)$ to the seed with
 $d$. Next we modify $L'$ further.
   \begin{packed_itemize}
 \item Assume first that $X_{1}$ is small.
   If $b \in B(\tilde{P},1) \cap B(P)$ and
      $b$ is  mixed on an edge
   of $W(\tilde{P})$, remove $1$ from  $L'(b)$.
 \item Next assume that $X_{1}$ is big. Let
   $X_{1}=\{a_1-b_1-c_1, \ldots, a_{M}-b_{M}-c_{M}\}$ where
   $U_1=\{a_1, \ldots, a_M\} \subseteq B(P,1)$.
   If $b \in B(\tilde{P},1) \cap B(P)$ has
   a neighbor in $\{b_q,c_q\}$ for more than one value of $q$, remove $1$
   from $L'(b)$.
   \end{packed_itemize}
   Denote the precoloring we have constructed so far by $P(X_1,d)$.

We claim that $d(v) \in L(P(X_1,d))(v)$ for every $v \in V(G)$.
Suppose not. Since $\tilde{P}$ is obtained from $P$ by moving a set of vertices
to the seed with $d$, it follows that $d(v) \in L'(v)$ for every $v \in V(G)$.
Thus we may assume that for some $v \in V(G)$, $d(v) \in L'(v) \setminus L(P(X_1,d))(v)$. Suppose first that $X_1$ is small. Then
$v \in  B(\tilde{P},1) \cap B(P)$, $v$ is  mixed on an edge $w_1w_2$
of $G|W(\tilde{P})$, and $d(v)=1$. Then $w_1,w_2 \in W(P)$ and
by \ref{neatlemma}.1, at least one of $w_1,w_2$ has a neighbor in $X_1$.
It follows that not both $w_1,w_2$ are in $W(\tilde{P})$, a contradiction.
Thus we may assume that $X_1$ is big, $v \in B(\tilde{P},1) \cap B(P)$, $v$ has
a neighbor in $\{b_q,c_q\}$ for more than one value of $q$, and $d(v)=1$.
But this immediately contradicts \ref{neatlemma}.2.  This proves that
$d(v) \in L(P(X_1,d))(v)$ for every $v \in V(G)$.
   
Next let $X_2=char_M(P(X_1,d),2,d)$. Then $X_2 \in guess(P(X_1,d),2)$ and
$d$ is a $2$-consistent coloring of $V(X_2)$.
  Define the $r$-seeded precoloring $P(X_2,d)$ as follows. Let
$\tilde{P}=(G,L',S')$ be obtained from $P(X_1,d)$ by moving $V(X_2)$ to the seed
with $d$. Next we modify $L'$ further.
   \begin{packed_itemize}
 \item Assume first that $X_{2}$ is small.
   If $b \in B(\tilde{P},2) \cap B(P(X_1,d))$ and
      $b$ is  mixed on an edge
   of $W(\tilde{P})$, remove $2$ from  $L'(b)$.
 \item Next assume that $X_{2}$ is big. Let
   $X_{2}=\{a_1-b_1-c_1, \ldots, a_{M}-b_{M}-c_{M}\}$
   where $U_2=\{a_1, \ldots, a_M\} \subseteq B(P,2)$.
   If $b \in B(\tilde{P},2) \cap B(P(X_1,d))$ has
   a neighbor in $\{b_q,c_q\}$ for more than one value of $q$, remove $2$
   from $L'(b)$.
   \end{packed_itemize}
   Denote the precoloring we have constructed so far by $P(X_2,d)$.
   Repeating the previous  argument with $X_1$ replaced by $X_2$, we deduce that
   $d(v) \in L(P(X_2,d))$ for every $v \in V(G)$.
   Finally let $X_3=char_M(P(X_2,d),3,d)$.
Then $X_3 \in guess(P(X_2,d),3)$ and
$d$ is a $3$-consistent coloring of $V(X_3)$.
  Define the $r$-seeded precoloring $P(X_3,d)$ as follows. Let
$\tilde{P}=(G,L',S')$ be obtained from $P(X_2,d)$ by moving $V(X_3)$ to the seed
with $d$. Next we modify $L'$ further.
   \begin{packed_itemize}
 \item Assume first that $X_{3}$ is small.
   If $b \in B(\tilde{P},3) \cap B(P(X_2,d))$ and
      $b$ is  mixed on an edge
   of $W(\tilde{P})$, remove $3$ from  $L'(b)$.
 \item Next assume that $X_{3}$ is big. Let
   $X_{3}=\{a_1-b_1-c_1, \ldots, a_{M}-b_{M}-c_{M}\}$
   where $U_3=\{a_1, \ldots, a_M\} \subseteq B(P,3)$.
   If $b \in B(\tilde{P},3) \cap B(P(X_2,d) )$ has
   a neighbor in $\{b_q,c_q\}$ for more than one value of $q$, remove $3$
   from $L'(b)$.
   \end{packed_itemize}
   Denote the precoloring we have constructed by $P(X_3,d)$.
   Repeating the previous argument with $X_2$ replaced by $X_3$, we deduce that
   $d(v) \in L(P(X_3,d))$ for every $v \in V(G)$.
   Let $X=(X_1,X_2,X_3)$.
   Then $X \in \mathcal{Q}$
   and $d$ is a consistent precoloring of $V(X)$. 
   Let $Q_{Y,d}=P(X_3,d)$; then $Q_{Y,d} \in \mathcal{M}$ and $d$ is a precoloring
   extension of $Q_{Y,d}$.

   Now we construct $R \in \mathcal{L}(Q_{Y,d})$ that has a precoloring
   extension. For every $T \in \mathcal{T}$ and $i \in \{1,2,3\}$ let
   $X_{T,i}=char_2(Q_{Y,d},T,i,d)$. Then $d$ is a consistent coloring of $X_{T,i}$.
   Let $Q'$ be obtained by moving $\bigcup_{T,i}V(X_{T,i})$
   to the seed with $d$; write $L'=L(Q')$. We modify $L'$ further.
   For $T \in \mathcal{T}$ and $i \in \{1,2,3\}$ proceed as follows.
   \begin{packed_itemize}
   \item  Assume $X_{T,i}$ is small. If $b \in B(Q') \cap B(Q_{Y,d},T)$ is mixed on an
     edge of $W(Q')$, remove $i$ from $L'(b)$.
     \end{packed_itemize}

   Denote the precoloring thus obtained by $P_{X,d}$. It follows from the
   construction process of $\mathcal{L}(Q_{Y,d})$ that
   $P_{X,d} \in \mathcal{L}(Q_{Y,d})$.

We claim that $d(v) \in L(P_{X,d})(v)$ for every $v \in V(G)$.
Suppose not. Since $Q'$ is obtained from $Q_{Y,d}$ by moving a set of vertices
to the seed with $d$, it follows that $d(v) \in L'(v)$ for every $v \in V(G)$.
Thus we may assume that for some $v \in V(G)$,
$d(v) \in L'(v) \setminus L(P_{X,d})(v)$. Then $v \in B(Q') \cap B(P,T)$,
 $X_{T,i}$ is small, $v$  is mixed on an
     edge $w_1w_2$ of $G|W(Q')$, and $d(v)=i$.
Then $w_1,w_2 \in W(P)$ and
by \ref{neatlemmatype}.1, at least one of $w_1,w_2$ has a neighbor in $X_{T,i}$.
It follows that not both $w_1,w_2$ are in $W(Q')$, a contradiction.
This proves that
$d(v) \in L(P_{X,d})(v)$ for every $v \in V(G)$. 
Consequently,  $d$
   is a precoloring    extension of $P_{X,d}$. This proves~\ref{generaltonice}.
\end{proof}

\section{From nice to stable} \label{sec:nicetostable}

In this section we show that in order to be able to test if a nice $r$-seeded
precoloring has a precoloring extension, it is enough to be able to answer the
same question for a more restricted kind of $r$-seeded precoloring, that
we call ``stable''. 

We start with a lemma.
Let $G$ be a graph
    and $L$ a list assignment for $G$. We say that $v \in V(G)$ is
    {\em connected} if $G|N(v)$ is connected.
Let $v \in V(G)$   be connected such that $G|N(v)$ is bipartite.
Let $(A_1,A_2)$ be the (unique) bipartition of $G|N(v)$.
We say that $G'$ is obtained from $G$ by {\em reducing $v$}
if $V(G')=(V(G) \setminus (\{v\} \cup N(v))) \cup \{a_1,a_2\}$,
$G'\setminus \{a_1,a_2\}=G \setminus (\{v\}\cup N(v))$,
$a_1a_2 \in E(G')$, and for $u \in V(G) \cap V(G')$ and
$i \in \{1,2\}$, $a_iu \in E(G')$ if
and only if (in $G$) $u$ has a neighbor in $A_i$.
We say that $(G',L')$ is obtained from $(G,L)$ by {\em reducing $v$}
if $G'$ is obtained from $G$ by reducing $v$, $L'(u)=L(u)$
for every $u \in V(G') \setminus \{a_1,a_2\}$, and for $i=1,2$,
$L(a_i)=\bigcap_{a \in A_i}L(a)$.

\begin{theorem}
  \label{connectednbd}
  Let $r$ be an integer and let $G$ be a $(P_6+rP_3)$-free graph. 
  Let $v \in V(G)$ be connected such that $G|N(v)$ is bipartite with (unique)
  bipartition
  $(A_1,A_2)$, and let $G'$ be obtained from $G$ by reducing $v$. Then $G'$
  is $(P_6+rP_3)$-free.     
\end{theorem}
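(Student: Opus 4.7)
The plan is to argue by contradiction: assume $G'$ contains an induced copy $H'$ of $P_6+rP_3$, and construct an induced $P_6+rP_3$ in $G$. I split into three cases based on $|V(H')\cap\{a_1,a_2\}|$. The key translation I use repeatedly is: for $w\in V(G)\cap V(G')$ and $i\in\{1,2\}$, $a_iw\in E(G')$ iff $w$ has a neighbor in $A_i$ in $G$.

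If $V(H')\cap\{a_1,a_2\}=\emptyset$, then $V(H')\subseteq V(G)\cap V(G')$, and since $G'\setminus\{a_1,a_2\}=G\setminus(\{v\}\cup N(v))$ by the construction of $G'$, the subgraph $H'$ is already induced in $G$. If $V(H')\cap\{a_1,a_2\}=\{a_1\}$ (the case $\{a_2\}$ is symmetric), then $a_1$ has degree $1$ or $2$ in $H'$, since $P_6+rP_3$ has no isolated vertices. If $a_1$ is pendant, I would replace it by any vertex $a^*\in A_1$ adjacent to its unique $H'$-neighbor; non-adjacency of $a^*$ to the rest of $V(H')$ is automatic, since those vertices have no neighbor in $A_1$. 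If $a_1$ has two $H'$-neighbors $u_1,u_2$, then either some common $a^*\in A_1$ is adjacent to both (and I use it as a single replacement), or the $A_1$-neighborhoods of $u_1,u_2$ are disjoint, in which case I pick $a^*_1,a^*_2\in A_1$ from them respectively and replace $a_1$ by the induced $P_3$ $a^*_1-v-a^*_2$. This turns the component of $H'$ containing $a_1$ from $P_3$ or $P_6$ into $P_5$ or $P_8$, so the resulting graph in $G$ contains $P_6+rP_3$ as an induced subgraph.

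If both $a_1,a_2\in V(H')$, then $a_1a_2\in E(H')$ places them in the same component. In a $P_3$ component of the form $w-a_1-a_2$, I pick a neighbor $a^*\in A_1$ of $w$ and replace $(a_1,a_2)$ by $(a^*,v)$, yielding the induced $P_3$ $w-a^*-v$. In the $P_6$, with $a_1=p_i$ and $a_2=p_{i+1}$, I replace $a_1$ by a vertex $a\in A_1$ adjacent to $p_{i-1}$ (or by $v$ itself when $i=1$) and $a_2$ by a vertex $a''\in A_2$ adjacent to $p_{i+2}$ (or by $v$ when $i=5$), inserting $v$ between them if $aa''\notin E(G)$; this produces an induced $P_6$ or $P_7$ in $G$, which combined with the unchanged $rP_3$-components contains $P_6+rP_3$. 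In every case, verifying that the new path is induced and anticomplete in $G$ to the rest of $V(H')$ is routine via the translation above together with $N_G(v)=A_1\cup A_2$ and the independence of $A_1,A_2$ in $G|N(v)$. I expect the main technical point to be the degree-$2$ subcase with only $a_1\in V(H')$: observing that if no single $A_1$-vertex is adjacent to both $u_1$ and $u_2$ then the $A_1$-neighborhoods of $u_1,u_2$ are disjoint is precisely what guarantees $a^*_1u_2\notin E(G)$ and $a^*_2u_1\notin E(G)$, ruling out unwanted chords in the replacement.
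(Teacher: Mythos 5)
Your proposal is correct and follows essentially the same route as the paper: assume an induced $P_6+rP_3$ in $G'$ and lift it back to $G$ by replacing $a_1,a_2$ with suitable vertices of $A_1\cup A_2\cup\{v\}$, using that vertices of the copy non-adjacent to $a_i$ in $G'$ have no neighbors in $A_i$, that $v$ is anticomplete to $V(G')\setminus\{a_1,a_2\}$, and that $A_1,A_2$ are stable. The paper merely organizes the cases slightly differently (by the number of attachment vertices, with a generic induced path through $A_1\cup A_2\cup\{v\}$ in the final case), while you spell out the explicit replacements; the substance is the same.
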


  \begin{proof}
    Suppose $Q$ is an induced subgraph of $G'$ isomorphic to $P_6+rP_3$.
    Recall that $v$ is anticomplete to $V(G') \setminus \{a_1,a_2\}$ (in $G$).
Then $V(Q) \cap \{a_1,a_2\} \neq \emptyset$.
    If only one vertex of $V(Q) \setminus \{a_1,a_2\}$, say $q$,  has a
    neighbor in $V(Q) \cap \{a_1,a_2\}$, say,
    $a_1$,    then we get a $P_6+rP_3$ in $G$ by replacing $a_1$ with
    a vertex of $N_G(q) \cap A_1$,
    and, if $a_2 \in V(Q)$, replacing $a_2$ with $v$.
Thus we may    assume that
    two vertices $q,q'$ of $V(Q) \setminus \{a_1,a_2\}$ have a neighbor in
    $V(Q) \cap \{a_1,a_2\}$. If
    $q$
    and $q'$ have a
    common neighbor $u \in A_1 \cup A_2$, then
    $G|((V(Q) \setminus \{a_1,a_2\}) \cup \{u\})$
    is a $P_6+rP_3$, a contradiction. So no such $u$ exists. 
Let $Q'$ be an induced path from $q$ to 
$q'$ with $V(Q') \setminus \{q,q'\} \subseteq A_1 \cup A_2 \cup \{v\}$,
meeting only one of $A_1,A_2$, if possible.
Then  $G|((V(Q) \setminus \{a_1,a_2\}) \cup V(Q'))$ contains 
a $P_6+rP_3$, a contradiction. This proves \ref{connectednbd}.
  \end{proof}

A $r$-seeded precoloring $P=(G,L,S)$ is {\em stable} if
\begin{packed_itemize}
  \item $P$ is nice.
\item Every component $C$ of $W(P)$ such that some $w \in C$ has $|L(w)|=3$
  satisfies $C=\{w\}$
\item Let $\{i,j,k\}=\{1,2,3\}$. Then for every $b \in B(P)_{ij}$
  the set $N(b) \cap B(P)_{ik}$ is stable.
\item Let $\{i,j,k\}=\{1,2,3\}$, let $w \in W(P)$ with $|L(w)|=3$ and let
  $n \in B(P)_{ij}$ and $n' \in B(P)_{jk}$ be adjacent to $w$. 
Then no $u \in B_{ik}$ is complete to $\{n,n'\}$.
\item No $w \in W(P)$ with $|L(w)|=3$ is connected, and
  \item  $deg(v)>2$ for every $v \in V(G)$ with $|L(v)|=3$.
\end{packed_itemize}

We can now prove the main result of this section.

\begin{theorem}
  \label{nicetostable}
  For every integer  $r>0$ there exists $c \in \mathbb{N}$ with the following
  properties.
  Let $G$ be a $(P_6+rP_3)$-free graph. Let $P=(G,L,S)$ be a nice
  $r$-seeded precoloring of $G$. Assume that
  for every $X \subseteq V(G)$ with $|X| \leq 4r+8$, the pair
  $(G|X,L)$ is  colorable. Then there exists a  collection
  $\mathcal{L}$ of stable $r$-seeded precolorings such that
  \begin{packed_enum}
    \item $|V(G(P'))| \leq |V(G)|$ for every $P' \in \mathcal{L}$, 
\item $S(P')=S(P)$ for every $P' \in \mathcal{L}$
    \item $\mathcal{L} \leq |V(G)|$, and 
    \item If for every $P' \in \mathcal{L}$ we know if $P'$ has a precoloring
      extension, then we can decide in polynomial time if $P$ has a precoloring
      extension, and construct one if it exists.
  \end{packed_enum}
  Moreover, $\mathcal{L}$ can be constructed in time $O(|V(G)|^c)$.
\end{theorem}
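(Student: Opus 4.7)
The plan is to enforce the five additional conditions of ``stable'' (beyond ``nice'') in the order $6, 3, 4, 5, 2$, using four operations: vertex deletion, list restriction, the reduction of Theorem~\ref{connectednbd}, and a single branching step of width at most $|V(G)|$. In all cases the seed $S$ remains unchanged, and Theorem~\ref{connectednbd} guarantees that the graph remains $(P_6+rP_3)$-free. Each non-branching transformation strictly decreases $|V(G)| + \sum_{v} |L(v)|$, so the procedure terminates in polynomial time.

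I would first enforce condition 6 by deleting every $v$ with $|L(v)|=3$ and $\deg(v)\leq 2$; any coloring of $G\setminus v$ extends to $v$ because $v$ has at most two neighbors and $|L(v)|=3$. Next, for conditions 3 and 4, I use list restriction. For condition 3, if $b\in B(P)_{ij}$ has two adjacent neighbors $u_1, u_2 \in B(P)_{ik}$, then in any coloring $\{c(u_1), c(u_2)\} = \{i, k\}$, forcing $c(b) = j$; I set $L(b) := \{j\}$ and re-update exhaustively. For condition 4, if $w\in W(P)$ with $|L(w)|=3$, $n\in B(P)_{ij}$, $n'\in B(P)_{jk}$ are both adjacent to $w$ and $u\in B(P)_{ik}$ is complete to $\{n,n'\}$, a short case analysis on $c(u)\in\{i,k\}$ together with $c(u)\neq c(n), c(n')$ shows that $c(w) \neq j$ in every extension; I remove $j$ from $L(w)$, which drops $|L(w)|$ to $2$ and so $w$ no longer participates in a violation of condition 4. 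I iterate until conditions 3 and 4 both hold.

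For condition 5, for each connected $w\in W(P)$ with $|L(w)|=3$, I first test whether $G|N(w)$ is bipartite; if not, any 3-coloring would require a 2-coloring of $N(w)$, which is impossible, so the equivalent collection is empty. Otherwise, I apply Theorem~\ref{connectednbd} to reduce $w$, obtaining a strictly smaller $(P_6+rP_3)$-free graph with the same seed. For condition 2, the only remaining violations are components $C$ of $W(P)$ with $|C|\geq 2$ containing a list-3 vertex $w$; by Theorem~\ref{W} we have $|C|\leq 3$, and after condition 5 each such $w$ has disconnected neighborhood. A structural analysis using $(P_6+rP_3)$-freeness together with the hypothesis that every subset of size at most $4r+8$ is colorable narrows these configurations down to a single choice of a distinguished vertex from $V(G)$, yielding $|\mathcal{L}|\leq |V(G)|$.

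The main obstacle is verifying that the ordering $6, 3, 4, 5, 2$ is robust: each transformation must preserve the earlier conditions, and in particular the \emph{nice} property (no boundary vertex mixed on a wilderness edge) must survive every list restriction and reduction. List restrictions are the most delicate, because they can shrink the wilderness and promote vertices into the boundary in ways that might a priori re-introduce earlier violations; I would verify (using the exhaustive updating convention) that no newly promoted boundary vertex ends up mixed on a wilderness edge. A secondary subtle point is showing that the branching step for condition 2 can be confined to a single choice of width $|V(G)|$, rather than exponential branching across multiple bad components.
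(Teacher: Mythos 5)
Your handling of conditions 6, 3, 4 and 5 matches the paper's proof of \ref{nicetostable} almost exactly: deletion of low-degree list-3 vertices, the two list restrictions (which are precisely the paper's steps \eqref{stableboundary} and \eqref{nosquare}), and the reduction of connected list-3 vertices via \ref{connectednbd} (the paper uses the hypothesis that all sets of size at most $4r+8$ are $L$-colorable to rule out a non-bipartite $G|N(w)$ -- an odd cycle in $N(w)$ has length at most $4r+7$ since $G$ is $P_{4r+6}$-free -- whereas you output an empty collection in that case; both are fine).

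The genuine gap is in your treatment of condition 2, which is exactly the step that produces the collection $\mathcal{L}$ and the bound $|\mathcal{L}|\leq |V(G)|$. Your plan -- ``a structural analysis using $(P_6+rP_3)$-freeness together with the colorability hypothesis narrows these configurations down to a single choice of a distinguished vertex'' -- is not an argument: you do not say what is being branched on, why one distinguished vertex suffices globally, or why each branch is stable, and your own closing remark concedes that you cannot rule out exponential branching over several bad wilderness components. No branching is needed at all. The paper first splits $G\setminus X^0(L)$ into its connected components (step \eqref{X0cutset}), which is legitimate because item 4 of the theorem only asks to decide $P$ from the answers for \emph{all} members of $\mathcal{L}$, so a conjunctive decomposition of width at most $|V(G)|$ is allowed; this is where $|\mathcal{L}|\leq|V(G)|$ comes from. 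Then condition 2 is automatic: if $C$ is a component of $W(P)$ with $|C|\geq 2$ containing $w$ with $|L(w)|=3$, then $C$ is a clique (since $G|S$ contains $P_6+(r-1)P_3$, the wilderness is $P_3$-free), niceness forces every attachment of $C$ in $B(P)$ to be complete to $C$, and $w$ has no neighbors in $X^0(L)$ or in $W(P)\setminus C$; hence $N(w)$ consists of a nonempty clique inside $C$ together with vertices complete to it, so $G|N(w)$ is connected, contradicting the already-enforced condition 5. So the ``distinguished vertex'' branching you propose both lacks a proof and attacks the wrong difficulty; the missing idea is the component split plus the observation that niceness makes non-singleton wilderness components with a list-3 vertex impossible once connected list-3 vertices have been eliminated.
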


\begin{proof}
  In the proof we describe several modifications that can be made to $P$
  (in polynomial time)
  without changing the existence of a precoloring extension.

  \begin{equation}
    \label{reducev}
    \longbox{Let $v \in V(G)$ with $|L(v)|=3$ 
    such that either $deg_G(v) \leq 2$, or $v$ is connected. Then 
    we can construct in polynomial time a $(P_6+rP_3)$-free graph $G'$ 
    and an $r$-seeded precoloring $P'=(G',L',S)$ such that
    $|V(G')|<|V(G)|$, and $\{P'\}$ is equivalent to $P$.}
      \end{equation}
      If $deg_G(v)<2$ we can set $(G',L',S)=(G\setminus v, L,S)$;
      thus we may assume that $v$ is connected. If $G|N(v)$ is not bipartite, then $G|N(v)$ contains an odd cycle $C$ (as an induced subgraph). Since $G$ is $(P_6+rP_3)$-free, and therefore $P_{6+4r}$-free, it follows that $|V(C)| \leq 4r+7$.
      But now $G|(V(C) \cup \{v\})$ is not $3$-colorable, and therefore
      is not $L$-colorable, a contradiction. So we may assume
      that $G|N(v)$ is bipartite with bipartition $(A_1,A_2)$.
      Since $G|N(v)$ is connected, it follows that the bipartition is unique.
      Let $(G',L')$ be obtained from $(G,L)$ by reducing $v$.
      Since $|L(v)|=3$, it follows that $v \in W(P)$, and therefore
      $(A_1 \cup A_2) \cap S = \emptyset$. Thus
      $P'=(G',L',S)$ is an $r$-seeded precoloring of $G'$.
      By     \ref{connectednbd} $G'$ is $(P_6+rP_3)$-free. The
      uniqueness of the bipartition $(A_1,A_2)$ implies that in every
      coloring of
      $(G,L)$  each of the sets $A_1,A_2$ is monochromatic. This in turn
      implies
      that if $c$ is a precoloring extension of $P$, then a precoloring
      extension $c'$ of
      $P'$ can be obtained by setting $c'(u)=c(u)$ for every $u \in V(G') \setminus \{a_1,a_2\}$, and $c'(a_i)=c(A_i)$ for $i=1,2$. Conversely, if $c'$ is
      a precoloring extension of $P'$, then 
    a precoloring extension $c$ of $P$ can be obtained by setting
      $c(u)=c'(u)$ for every $u \in V(G) \cap V(G')$, $c(a)=c'(a_i)$
      for every $a \in A_i$, and $c(v)=\{1,2,3\} \setminus \{c'(a_1),c'(a_2)\}$.
      This proves~\eqref{reducev}.

      \bigskip

      \begin{equation}
        \label{stableboundary}
        \longbox{Let $\{i,j,k\}=\{1,2,3\}$. Suppose that  $b \in B(P)_{ij}$
          has neighbors $n,n' \in B(P)_{ik}$ such that $n$ is adjacent to $n'$.
          Let $P'=(G',L',S)$ be 
          the $r$-seeded precoloring obtained by setting
          $L'(b)=\{j\}$, and
          $L'(v)=L(v)$ for every $v \in V(G) \setminus \{b\}$.
          Then          $\{P'\}$ is equivalent to $P$.}
      \end{equation}
      \eqref{stableboundary} follows from the fact that in every precoloring
      extension of $P$ one of $n,n'$ receives color $i$.

      \bigskip

       \begin{equation}
        \label{nosquare}
        \longbox{Let $\{i,j,k\}=\{1,2,3\}$, let $w \in W(P)$ with $|L(w)|=3$
          and let
  $n \in B(P)_{ij}$ and $n' \in B(P)_{jk}$ be adjacent to $w$. 
Suppose that some  $u \in B(P)_{ik}$ is complete to $\{n,n'\}$.
          Let $P'$ be 
          the seeded precoloring $(G,L',S)$ obtained by setting
          $L'(w)=\{1,2,3\} \setminus \{j\}$. Then
          $\{P'\}$ is equivalent to $P$.}
      \end{equation}
       Clearly a precoloring extension of $P'$ is also a precoloring extension of $P$.
       To see the converse, let $c$ be a precoloring extension of $P$.
       We may assume by symmetry that $c(u)=i$. Then $c(n)=j$, and
       so $c(w) \neq j$, and $c$ is a precoloring extension of $P'$. This
       proves \eqref{nosquare}.

      \bigskip

      Repeatedly applying \eqref{reducev}, \eqref{stableboundary} and
      \eqref{nosquare}
      we may assume that

      \begin{equation}
        \label{almoststable}
\longbox{\begin{packed_itemize}
\item No $w \in W$ with $|L(w)|=3$ is connected, 
  \item  $deg(v)>2$ for every $v \in V(G)$ with $|L(v)|=3$.
  \item For every $\{i,j,k\}=\{1,2,3\}$ and  for every $b \in B(P)_{ij}$, the set      $N(b) \cap B(P)_{ik}$ is stable.
\item Let $\{i,j,k\}=\{1,2,3\}$, let $w \in W(P)$ with $|L(w)|=3$ and let
  $n \in B(P)_{ij}$ and $n' \in B(P)_{jk}$ be adjacent to $w$. 
Then no   $u \in B(P)_{ik}$ is complete to $\{n,n'\}$.
\end{packed_itemize}}
      \end{equation}

\begin{equation}
    \label{X0cutset}
    \longbox{We may assume that $G \setminus X^0(L)$ is connected.}
    \end{equation}
  Suppose not. Let $C_1, \ldots, C_m$ be the components of $G \setminus X^0(L)$.
  For $i \in \{1, \ldots, m\}$ let $G_i=G|(X^0(L) \cup C_i)$. Then
  $S \subseteq V(G_i)$ for every $i$. Moreover,
  $P_i=(G_i,S,L)$ is an $r$-seeded precoloring of $G_i$, where
  $B(P_i)=B(P) \cap C_i$ and $W(P_i)=W(P) \cap C_i$. It follows
  that each of $P_i$ is nice, and satisfies \eqref{almoststable}.
  Clearly if $P$ has a precoloring
  extension, then each $P_i$ does. Conversely, if each $P_i$ has a
  precoloring extension $c_i$, then setting $c(v)=c_i(v)$ for
  $v \in V(G_i)$, we obtain a precoloring extension of $P$.
  Now it is enough to prove the
  theorem for each $P_i$ separately, and \eqref{X0cutset} follows.

  \bigskip

  In view of \eqref{X0cutset} from now on we assume that
  $G \setminus X^0(L)$ is connected.    
      It remains to show that:
      
      \begin{equation}
\label{singletoncomp}
\longbox{If $C$ is a component of $W(P)$ and $w \in C$ has $|L(w)|=3$,
        then $W=\{c\}$.}
\end{equation}
      Let $C$ and $w$ be as above. Since $G \setminus X^0(L)$ is connected,
      it follows that $B(P)(C)$ (this is the set of attachments of $C$ in
      $B(P)$) is non-empty and complete to $C$. Since $|L(w)|=3$, it
      follows from the definition of a seeded precoloring that $w$ is
      anticomplete to $X^0(L)$, and consequently
      $N(w)=(B(P)(C)) \cup (N(w) \cap C )$. If $C \neq \{w\}$, then
      $N(w) \cap C \neq \emptyset$, and therefore $w$ is connected,
      a contradiction. This proves~\eqref{singletoncomp}.

      \bigskip
      
      Now \ref{nicetostable} follows from \eqref{almoststable} and \eqref{singletoncomp}.
      \end{proof}
  
\section{Reducing lists} \label{sec:stableto2SAT}

The goal of this section is to deal with stable precolorings.
Similarly to Section~\ref{sec:generaltonice}, will define several
``characteristics'' of a precoloring extension of an $r$-seeded precoloring,
and then, in the algorithm, given an $r$-seeded precoloring, enumerate
all possible characteristics of its precoloring extensions.

First we need a few more definitions. Let $P(G,L,S)$ be an $r$-seeded precoloring.
We write $\tilde{W}(P)=\{w \in W(G) \; : \: |L(w)|=3\}$ and denote by
$\tilde{B}(P)$ the set of attachments of $\tilde{W}(P)$ in $B(P)$.
We write $\tilde{B}(P,i)=B(P,i) \cap \tilde{B}(P)$,
$\tilde{B}(P)_{ij}=B(P)_{ij} \cap \tilde{B}(P)$ and
$\tilde{B}(P,T)=B(P,T) \cap \tilde{B}(P)$.

Let $P=(G,S,L)$ be an $r$-seeded precoloring of a $(P_6+rP_3)$-free graph $G$,
and let $c$ be a precoloring extension of $P$. 
We define several hypergraphs associated with $P$ and $c$.
For every $i  \in \{1,2,3 \}$, let
$V_i= \{b \in \tilde{B}(P,i) \text{ : } c(b)=i\}$.

  For every distinct $i,j \in \{1,2,3\}$    we define the hypergraph $R(P,i,j,c)$
 with vertex set $V_i$ as follows.
 Let $K$  be the set of all vertices $w \in \tilde{W}(P)$
 such that
$w$ has two neighbors $n,n' \in \tilde{B}(P)_{ij}$ with $c(n)=c(n')=i$. Let
$h(w)=N(w) \cap V_i$.
Then $\{h(w) \text{ : } w \in K  \}$ is the set of the hyperedges of
$R(P,i,j,c)$.

We prove:

\begin{theorem}
  \label{RLemma}
  There is a function $f:\mathbb{N} \rightarrow \mathbb{N}$ with the following
  properties. 
  Let $r>0$ be an integer, $G$ a $(P_6+rP_3)$-free graph with no clique of size four,  $P=(G,L,S)$
  a stable $r$-seeded precoloring, and let $c$ be a coloring of $(G,L)$.
  Then for every integer $M$ and 
  every distinct $i, j \in  \{1,2,3\}$ either
  \begin{packed_enum}
  \item there exists $X \subseteq V(R(P,i,j,c))$ with $|X| \leq f(M)$ such that
    if $w \in \tilde{W}(P)$ is anticomplete to $X$, then $w$ has at most one
     neighbor $n \in \tilde{B}(P)_{ij}$ with 
    $c(n)=i$, or 
      \item there exists a flock $F=\{a_1-b_1-c_1, \ldots, a_{M}-b_{M}-c_{M}\}$
    where for every $l$, $a_l,c_l \in V_i \cap \tilde{B}(P)_{ij}$ and
    $b_l \in \tilde{W}(P)$ ,and 
    such that every vertex of $V_i$  is adjacent to $b_l$
    for at most one value of $l$.
  \end{packed_enum}
\end{theorem}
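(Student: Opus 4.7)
My plan is to follow the template of \ref{neatlemma} and \ref{neatlemmatype} and apply \ref{nutau} directly to the hypergraph $R = R(P,i,j,c)$. The key observation distinguishing this lemma from the earlier two is that stability of $P$ collapses the ``$W$ side'' of the argument to isolated vertices, so I will use parameters $p = q = 1$ rather than $p = 3$, $q = 2$.

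Concretely, I would invoke \ref{nutau} with stable set $X = V_i$, with $Y = \tilde{W}(P)$, and with $\mathcal{Z} = \{\{w\} : w \in K\}$, where $K$ is as in the definition of $R(P,i,j,c)$. The hypotheses check out: $V_i$ is stable because it is monochromatic under $c$; every component of $\tilde{W}(P)$ is a singleton, since $P$ is stable and therefore every component of $W(P)$ containing a vertex of $\tilde{W}(P)$ is a singleton; and each singleton $\{w\}$ with $w \in K$ has an attachment in $V_i$, because by definition of $K$ the vertex $w$ has two neighbors in $\tilde{B}(P)_{ij}$ colored $i$, both of which lie in $V_i$. With $p = q = 1$ we have ${p \choose q} = 1$ and $C = 2r + 4$, and the hypergraph produced by \ref{smalllambda} is literally $R(P,i,j,c)$ (since $X(\{w\}) = N(w) \cap V_i = h(w)$).

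\ref{nutau} then yields a function $f_{r,1,1}$ such that $\tau(R) \leq f_{r,1,1}(\nu(R))$. So either $\nu(R) \geq M$, or $\tau(R) \leq f_{r,1,1}(M-1)$, which I would take as the definition of $f(M)$ in the statement. In the second case, a hitting set $X$ of size at most $f(M)$ witnesses condition~1: if some $w \in \tilde{W}(P)$ is anticomplete to $X$ yet has two neighbors in $\tilde{B}(P)_{ij}$ colored $i$, then $w \in K$, so $h(w)$ is a hyperedge of $R$ that $X$ must meet, contradicting $N(w) \cap X = \emptyset$. In the first case, I let $h(w_1), \ldots, h(w_M)$ be the matching, pick two neighbors $a_l, c_l \in N(w_l) \cap V_i \cap \tilde{B}(P)_{ij}$ for each $l$ (which exist since $w_l \in K$), and set $b_l = w_l$; disjointness of the matching immediately gives that every $v \in V_i$ is adjacent to at most one $b_l$.

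The one piece of bookkeeping that I expect to be the main obstacle is verifying that the resulting $F$ is truly a flock, i.e., that the seagulls are pairwise anticomplete. The bodies $b_l$ and $b_{l'}$ are non-adjacent because both are isolated in $G|W(P)$ by stability; all wings lie in the stable set $V_i$ and so carry no edges among them; and an edge $b_l a_{l'}$ or $b_l c_{l'}$ would place $a_{l'}$ (or $c_{l'}$) in $h(w_l) \cap h(w_{l'})$, contradicting that the hyperedges form a matching. This anticompleteness check is exactly the point at which the isolated structure of $\tilde{W}(P)$ provided by stability, together with monochromaticity of $V_i$, is used in an essential way.
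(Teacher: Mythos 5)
Your proposal is correct and follows essentially the same route as the paper: the paper's proof likewise applies \ref{nutau} with $p=q=1$ to $H=R(P,i,j,c)$, using stability of $P$ to make $\tilde{W}(P)$ a stable set anticomplete to the rest of $W(P)$, and then reads off conclusion~1 from a hitting set and conclusion~2 from a matching. Your extra bookkeeping (verifying the hypotheses of \ref{nutau} explicitly and checking pairwise anticompleteness of the seagulls via stability of $\tilde{W}(P)$, monochromaticity of $V_i$, and disjointness of the hyperedges) is exactly the justification the paper leaves implicit.
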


  \begin{proof}
    Let $f=f_{r,1,1}$  be as in \ref{nutau}.
    Since $P$ is stable, $\tilde{W}(P)$ is a stable set, and
    $\tilde{W}(P)$ is anticomplete to $W(P) \setminus \tilde{W}(P)$.
  Applying \ref{nutau} to $H=R(P,i,j,c)$ with $p=q=1$, we deduce that
  either $\nu(H) \geq M$ or $\tau(H) \leq f(M)$.

  Suppose first  that $\nu(H) \geq M$. Let $b_1, \ldots, b_M \in \tilde{W}(P)$ be
  such that $M=\{h(b_1), \ldots, h(b_{M})\}$ is a
  matching of $H$.
  It follows from the definition of $h(b_l)$ that
  for every $l$ there exists $a_l,c_l \in \tilde{B}(P)_{ij}$ such that
$c(a_l)=c(c_l)=i$, and consequently 
  $a_l-b_l-c_l$ is a
  seagull. Moreover, since $M$ is a matching of $H$, no $v \in V_i$
  belongs to more than one $h(b_l)$, and therefore every vertex of
  $V_i$  is adjacent to $b_l$ 
  for at most one value of $l$. This proves that  if $\nu(H) \geq M$, then
  \ref{RLemma}.2 holds.
  Thus we may assume that $\tau(H) \leq f(M)$. Letting
  $X \subseteq V_i$ be a hitting set for $H$, we immediately see that
  \ref{RLemma}.1 holds.
\end{proof}

Given an $r$-seeded precoloring $P$,
distinct $i, j \in \{1,2,3\}$ and  a precoloring
extension $c$ of $P$,
we say that an {\em $R,M$-characteristic}  of $P,i,j,c$ (denoted by
$char_{R,M}(P,i,j,c)$) is $X$ if
\ref{RLemma}.1 holds for $P,i,j$  and $c$,  and $F$ if
\ref{RLemma}.2
holds for $P,i,j$ and $c$.  We denote by $V(char_{R,M}(P,i,j,c))$
the set of all the vertices involved in $char_{R,M}(P,i,j,c)$.

Next, for every $i \in \{1,2,3\}$,  we define another hypergraph, $S(P,i,c)$,
with  vertex set $V_i$.
Let $K$  be the set of all vertices $w \in \tilde{W}(P)$ such that
$w$ has a neighbor $n \in \tilde{B}(P)_{ij}$ and $n' \in \tilde{B}(P)_{ik}$
  with $c(n)=c(n')=i$. 
    Let $h(w)=N(w) \cap V_i$.
Then $\{h(w) \text{ : } w \in K  \}$ is the set of the hyperedges of
$S(P,i,c)$.

We prove an analogue of \ref{RLemma}.

\begin{theorem}
  \label{SLemma}
  There is a function $f:\mathbb{N} \rightarrow \mathbb{N}$ with the following
  properties. 
  Let $r>0$ be an integer, $G$ a $(P_6+rP_3)$-free graph with no clique of size four,  $P=(G,L,S)$
  a stable $r$-seeded precoloring, and let $c$ be a coloring of $(G,L)$.
  Then for every integer $M$ and 
  every $i \in \{1,2,3\}$ either
  \begin{packed_enum}
  \item there exists $X \subseteq V_i$ with $|X| \leq f(M)$ such that
    if $w \in \tilde{W}(P)$ is anticomplete to $X$, then 
either $c(N(w) \cap \tilde{B}(P)_{ij})=j$ or $c(N(w) \cap \tilde{B}(P)_{ik})=k$, or
      \item there exists a flock $F=\{a_1-b_1-c_1, \ldots, a_{M}-b_{M}-c_{M}\}$
        where for every $l$,
        $a_l \in \tilde{B}(P)_{ij}, b_l \in \tilde{W}(P)$ and $ c_l \in \tilde{B}(P)_{ik}$, $c(a_l) = c(c_l)=i$,
        and 
           such that every vertex of $V_i$  is adjacent to $b_l$
    for at most one value of $l$.
  \end{packed_enum}
\end{theorem}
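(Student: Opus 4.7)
The plan is to mimic the proof of \ref{RLemma} almost verbatim, simply replacing the hypergraph $R(P,i,j,c)$ by $S(P,i,c)$. The first thing to check is that \ref{nutau} applies with $p=q=1$. Since $P$ is stable, every component of $W(P)$ that meets $\tilde{W}(P)$ is a singleton, so $\tilde{W}(P)$ is a stable set and is anticomplete to $W(P) \setminus \tilde{W}(P)$. In particular, the vertices of $\tilde{W}(P)$ (thought of as the edges of $S(P,i,c)$) come from ``components'' of size $1$, and each hyperedge $h(w)$ is the set of attachments of the single-vertex set $\{w\}$ in $V_i \subseteq V(G)$.

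Take $f = f_{r,1,1}$ as in \ref{nutau}, and apply \ref{nutau} to $H = S(P,i,c)$ with $p=q=1$. This yields the dichotomy $\nu(H) \geq M$ or $\tau(H) \leq f(M)$. If $\tau(H) \leq f(M)$, take a hitting set $X \subseteq V_i$ of size at most $f(M)$. Then any $w \in \tilde{W}(P)$ anticomplete to $X$ satisfies $h(w) \cap X = \emptyset$, so $w \notin K$, meaning it is not the case that $w$ has both a neighbor in $\tilde{B}(P)_{ij}$ of color $i$ and a neighbor in $\tilde{B}(P)_{ik}$ of color $i$. Since vertices of $\tilde{B}(P)_{ij}$ have list $\{i,j\}$ and vertices of $\tilde{B}(P)_{ik}$ have list $\{i,k\}$, this translates to $c(N(w) \cap \tilde{B}(P)_{ij}) = j$ or $c(N(w) \cap \tilde{B}(P)_{ik}) = k$, which is conclusion 1.

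If instead $\nu(H) \geq M$, take a matching $\{h(b_1), \ldots, h(b_M)\}$ in $H$ with $b_l \in K$ for each $l$. By the definition of $K$, for every $l$ we can choose $a_l \in \tilde{B}(P)_{ij}$ and $c_l \in \tilde{B}(P)_{ik}$ adjacent to $b_l$ with $c(a_l) = c(c_l) = i$. Then $a_l - b_l - c_l$ is a seagull, and because the hyperedges $h(b_l) = N(b_l) \cap V_i$ are pairwise disjoint, no vertex of $V_i$ is adjacent to $b_l$ for more than one $l$; the argument for pairwise disjointness and anticompleteness of the seagulls is identical to the one used in \ref{RLemma} (using stability of $\tilde{W}(P)$ and the matching property to exclude shared vertices and adjacencies between distinct $h(b_l)$). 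This yields conclusion 2.

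There is no serious obstacle: the proof is a direct adaptation of \ref{RLemma}, and the only point requiring care is the translation of ``the hitting set misses $w$'' into the coloring statement of conclusion 1, which relies on the fact that the lists on $\tilde{B}(P)_{ij}$ and $\tilde{B}(P)_{ik}$ are exactly $\{i,j\}$ and $\{i,k\}$, so the only way to avoid color $i$ on a neighbor is to force color $j$ (respectively $k$).
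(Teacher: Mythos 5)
Your proposal is correct and follows essentially the same route as the paper: both apply \ref{nutau} with $p=q=1$ to the hypergraph $S(P,i,c)$, using stability of $P$ to ensure $\tilde{W}(P)$ is a stable set anticomplete to the rest of $W(P)$, and then read off conclusion 2 from a matching of size $M$ and conclusion 1 from a hitting set of size at most $f_{r,1,1}(M)$. Your extra remark translating ``$w\notin K$'' into the list-based statement of conclusion 1 is exactly the step the paper treats as immediate.
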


\begin{proof}
 Since $P$ is stable, $\tilde{W}(P)$ is a stable set, and
    $\tilde{W}(P)$ is anticomplete to $W(P) \setminus \tilde{W}(P)$.
  Let $f=f_{r,1,1}$  be as in \ref{nutau}.
  Applying \ref{nutau} to $H=S(P,i,c)$ with $p=q=1$, we deduce that
  either $\nu(H) \geq M$ or $\tau(H) \leq f(M)$.

  Suppose first  that $\nu(H) \geq M$. Let $b_1, \ldots, b_M \in \tilde{W}(P)$ be
  such that $M=\{h(b_1), \ldots, h(b_{M})\}$ is a
  matching of $H$.
  It follows from the definition of $h(b_l)$ that
  for every $l$ there exists $a_l \in \tilde{B}(P)_{ij}$ and $c_l \in \tilde{B}(P)_{ik}$
  with $c(a_l)=c(c_l)=i$, 
and consequently 
  $a_l-b_l-c_l$ is a
  seagull. Moreover, since $M$ is a matching of $H$, no $v \in V_i$
  belongs to more than one $h(b_l)$, and therefore every vertex of
  $V_i$  is adjacent to $b_l$ 
  for at most one value of $l$. This proves that  if $\nu(H) \geq M$, then
  \ref{SLemma}.2 holds.
  Thus we may assume that $\tau(H) \leq f(M)$. Letting
  $X \subseteq V_i$ be a hitting set for $H$, we immediately see that
  \ref{SLemma}.1 holds.
\end{proof}

Given an $r$-seeded precoloring $P$,
$i  \in \{1,2,3\}$ and  a precoloring
extension $c$ of $P$,
we say that an {\em $S,M$-characteristic}  of $P,i,c$ (denoted by
$char_{S,M}(P,i,c)$) is $X$ if
\ref{SLemma}.1 holds for $P,i$  and $c$,  and $F$ if
\ref{SLemma}.2
holds for $P,i$ and $c$.  We denote by $V(char_{S,M}(P,i,c))$
the set of all the vertices involved in $char_{S,M}(P,i,c)$.

We also need a version of the hypergraph above for types, as follows.
Let $\{i,j,k\}=\{1,2,3\}$.
For every pair of types  $T_1,T_2 \subseteq S$ with $c(T_1)=k$ and
$c(T_2)=j$, 
we define the hypergraph $H(P,T_1,T_2,c)$. The vertex set
$V(H(P,T_1,T_2,c))=\{b \in \tilde{B}(P,T_1) \cup \tilde{B}(P,T_2)  \text{ : } c(b)=i \}$. Next we
construct
the hyperedges. Let $K$  be set of all $w \in W$
such that $w$ has  neighbors $n \in \tilde{B}(P,T_1)$  and $n' \in \tilde{B}(P,T_2)$
with $c(n)=c(n')$ (and therefore $c(n)=c(n')=i$).
For every $w \in K$,  let $h(w)=N(w) \cap V(H(P,T_1,T_2,c))$.
Then
$\{h(w) \text{ : } w \in K  \}$ is the set of the hyperedges of
  $H(P,T_1,T_2,c)$.

\begin{theorem}
  \label{SLemmatypes}
  There is a function $f:\mathbb{N} \rightarrow \mathbb{N}$ with the following
  properties.
  Let $r>0$ be an integer, $G$ a $(P_6+rP_3)$-free graph with no clique of size four,  $P=(G,L,S)$
  a stable $r$-seeded precoloring, and let $c$ be a coloring of $(G,L)$.
  Let $\{i,j,k\}=\{1,2,3\}$.
  Then for every integer $M$ and for every pair of types
  $T_1,T_2$ of $S$ with $c(T_1)=k$ and $c(T_2)=j$
  either
  \begin{packed_enum}
  \item there exists $X \subseteq V(H(P,T_1,T_2,c))$ with
    $|X| \leq f(M)$ such that if $w \in W$ is anticomplete to $X$, then either
    $c(N(w) \cap \tilde{B}(P,T_1))=j$  or $c(N(w) \cap  \tilde{B}(P,T_2))=k$, or
  \item there exists a flock $F=\{a_1-b_1-c_1, \ldots, a_M-b_M-c_M\}$
    with $a_1, \ldots, a_M \in V(H(P,T_1,T_2,c)) \cap \tilde{B}(P,T_1)$,
$c_1, \ldots, c_M \in V(H(P,T_1,T_2,c)) \cap \tilde{B}(P,T_2)$,
    and
    $b_1, \ldots, b_M \in \tilde{W}(P)$.
  \end{packed_enum}
\end{theorem}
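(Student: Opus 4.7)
The plan is to mirror the proofs of \ref{RLemma} and \ref{SLemma} essentially verbatim, replacing the single-type hypergraph by $H(P,T_1,T_2,c)$. The setup uses stability of $P$: since $\tilde{W}(P)$ is a stable set and is anticomplete to $W(P)\setminus\tilde{W}(P)$, the connected subsets of $W(P)$ generating the hyperedges of $H(P,T_1,T_2,c)$ are singletons, which lets me apply \ref{nutau} with $p=q=1$. Setting $f=f_{r,1,1}$ as in \ref{nutau}, the dichotomy gives either $\nu(H)\geq M$ or $\tau(H)\leq f(M)$.

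In the case $\tau(H)\leq f(M)$, I take a hitting set $X\subseteq V(H(P,T_1,T_2,c))$ of size at most $f(M)$. If $w\in W$ is anticomplete to $X$, then $w\notin K$, because otherwise $X$ would be forced to meet $h(w)\subseteq N(w)$. Unfolding the definition of $K$, the vertex $w$ cannot simultaneously see a vertex of $\tilde{B}(P,T_1)$ colored $i$ and a vertex of $\tilde{B}(P,T_2)$ colored $i$. Since $c(T_1)=k$ and $c(T_2)=j$ force the lists $\{i,j\}$ and $\{i,k\}$ on $\tilde{B}(P,T_1)$ and $\tilde{B}(P,T_2)$ respectively, the only possibilities are $c(N(w)\cap\tilde{B}(P,T_1))=j$ or $c(N(w)\cap\tilde{B}(P,T_2))=k$, which is exactly conclusion \ref{SLemmatypes}.1.

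In the case $\nu(H)\geq M$, a matching $\{h(b_1),\dots,h(b_M)\}$ in $H$ provides distinct $b_1,\dots,b_M\in\tilde{W}(P)$. For each $l$, the fact that $b_l\in K$ supplies $a_l\in N(b_l)\cap\tilde{B}(P,T_1)$ and $c_l\in N(b_l)\cap\tilde{B}(P,T_2)$ with $c(a_l)=c(c_l)=i$; since $a_l$ and $c_l$ share color $i$ they are non-adjacent, so $a_l$-$b_l$-$c_l$ is a seagull of the required shape. Stability of $\tilde{W}(P)$ makes the $b_l$ pairwise non-adjacent, and disjointness of the hyperedges $h(b_l)$ ensures that the wings of different seagulls are disjoint, yielding the flock of conclusion \ref{SLemmatypes}.2. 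I do not expect a substantive obstacle here; the only thing to keep track of is the type bookkeeping that pins down which lists the vertices in $\tilde{B}(P,T_1)$ and $\tilde{B}(P,T_2)$ carry, so that the hitting-set statement translates cleanly into the coloring statement advertised in \ref{SLemmatypes}.1.
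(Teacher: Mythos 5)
Your proposal is correct and follows essentially the same route as the paper: apply \ref{nutau} with $p=q=1$ (justified by stability of $P$, so $\tilde{W}(P)$ is a stable set of singleton components), and translate the $\tau$/$\nu$ dichotomy into the hitting-set and flock conclusions exactly as in \ref{RLemma} and \ref{SLemma}. Your unpacking of the hitting-set case via the forced lists $\{i,j\}$ on $\tilde{B}(P,T_1)$ and $\{i,k\}$ on $\tilde{B}(P,T_2)$, and of the matching case via color-$i$ wings and hyperedge disjointness, matches (and slightly elaborates) the paper's argument.
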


\begin{proof}
 Since $P$ is stable, $\tilde{W}(P)$ is a stable set, and
    $\tilde{W}(P)$ is anticomplete to $W(P) \setminus \tilde{W}(P)$.
  Let $f=f_{r,1,1}$  be as in \ref{nutau}.
    Applying \ref{nutau} to $H=H(P,T_1,T_2,c)$ with $p=q=1$, we deduce
    that
  either $\nu(H) \geq M$ or $\tau(H) \leq f(M)$.
  Suppose first  that $\nu(H) \geq M$. Let $b_1, \ldots, b_M \in \tilde{W}(P)$
  be such that $M=\{h(b_1), \ldots, h(b_M)\}$ is a matching of $H$.
  It follows from the definition of $h(b_l)$ that
  there exists a flock  as in \ref{SLemmatypes}.2.
  Thus we may assume that $\tau(H) \leq f(M)$. Letting
  $X \subseteq V(H(P,T_1,T_2,c))$ be a hitting set for $H$, we immediately see
  that
  \ref{SLemmatypes}.1 holds.
\end{proof}

Given an $r$-seeded precoloring $P$  and types $T_1,T_2$ of $S(P)$ with
$c(T_1) \neq C(T_2)$,
and  a precoloring extension $c$ of $P$,
we say that an {\em $M$-characteristic}  of $P,T_1,T_2,c$ (denoted by
$char_M(P,T_1,T_2,c)$) is $X$ if
\ref{SLemmatypes}.1 holds for $P,T_1,T_2$  and $c$,  and
$F$ if \ref{SLemmatypes}.2
holds for $P,T_1,T_2$ and $c$.  We denote by $V(char_M(P,T_1,T_2,c))$ the
set of all the vertices involved in $char_M(P,T_1,T_2,c)$.

In contrast to Section~\ref{sec:generaltonice}  here
we will need another type of characteristic, that is not related to
\ref{nutau}.
Let $\{i,j,k\}=\{1,2,3\}$. A seagull $a-b-d$ is an
{\em $ij$-typed seagull} if $a \in \tilde{B}(P)_{ik}$, $b \in \tilde{W}(P)$ and 
$d \in \tilde{B}(P)_{jk}$.
An $ij$-typed seagull is 
{\em $ij$-colored} if
$c(a)=i$ and $c(d)=j$
(and therefore $c(b)=k$).
Let $width_{ij}(c)$ be the maximum size of a flock $F$ of
$ij$-colored seagulls.
We say that two $ij$-typed seagulls $a-b-c$ and $a'-b'-c'$ are {\em related}
$a$ is adjacent to $c'$, $c$ is adjacent to $a'$,
and there are no other edges between $s$ and $s'$.
A {\em $(P,i,j,c)$-key} is a pair $(X_1,X_2)$ such that
\begin{packed_itemize}
\item $X_1$ is a maximal flock of $ij$-colored seagulls.
Let $X_1=\{x_1-y_1-z_1, \ldots, x_m-y_m-z_m\}$.
  Let $P'=(G,L',S')$ be obtained from $P$ by moving $V(X_1)$ to the seed
  with $c$.
\item For every $l \in \{1, \ldots, m\}$
  let $S_l$ be a flock of size at most one, such that
  if $S_l \neq \emptyset$, then the member of $S_l$ is an $ij$-typed seagull
  of $P'$ related to $x_l-y_l-z_l$. Let $X_2=\bigcup_{l=1}^m S_l$.
\item For every $s_2 \in X_2$, at least one wing of  $s_2$ has color $k$
  (in $c$).
\item For every $l \in \{1, \ldots, m\}$, if $S_l=\emptyset$, then no
  $ij$-typed seagull of $P'$ that is related to $s_l$ has a
  wing $u$ with $c(u)=k$.
  \end{packed_itemize}
The {\em order} of the key is $|X_1|$.

\begin{theorem}
  \label{keylemma}
  Let $r>0$ be an integer, let $G$ be a $(P_6+rP_3)$-free graph with no clique of
  size four,  and let $P=(G,L,S)$ be
  a stable $r$-seeded precoloring.
  Assume that $G|(\tilde{B}(P)_{ik} \cup \tilde{B}(P)_{jk} \cup \tilde{W}(P))$ is $P_6$-free.
  Let $F$ be a flock of  $ij$-typed seagulls in $P$.
  Let $c$ be a coloring of $G|V(F)$ where
  $c(V(F) \cap \tilde{B}(P)_{ik})=i$,  $c(V(F) \cap \tilde{B}(P)_{jk})=j$,
  $c(V(F) \cap \tilde{W}(P))=k$,
  and  let $P'$ be the precoloring obtained from $P$ by moving $V(F)$ to the seed with $c$. Then
  \begin{packed_enum}
  \item For every $ij$-typed seagull $s$  of $P'$,    either
    $F \cup \{s\}$ is a flock, or $s$ is related to a seagull of $F$.
  \item If $s \in F$, and $s_1=x_1-y_1-z_1$ and $s_2=x_2-y_2-z_2$  are
    $ij$-typed seagulls of $P'$ 
    such that  both $s_1$ and $s_2$ are related to $s$, and
       $y_1$ is anticomplete to
    $\{x_2,z_2\}$, and $y_2$ is anticomplete to $\{x_1,z_1\}$,
    then    $s_1$ is related to $s_2$.
  \end{packed_enum}
  \end{theorem}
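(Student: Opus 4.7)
The plan is to exploit the hypothesis that $G|(\tilde{B}(P)_{ik} \cup \tilde{B}(P)_{jk} \cup \tilde{W}(P))$ is $P_6$-free, together with the stability of $P$. Before attacking the two conclusions, I would record two preliminary facts. \textbf{Fact (a):} any $ij$-typed seagull $s=a-b-d$ of $P'$ is already an $ij$-typed seagull of $P$; that is, $\tilde{W}(P') \subseteq \tilde{W}(P)$ and $\tilde{B}(P')_{ik} \subseteq \tilde{B}(P)_{ik}$ (and similarly $\tilde{B}(P')_{jk} \subseteq \tilde{B}(P)_{jk}$). This follows because the update from $P$ to $P'$ only shrinks lists, and because $\tilde{W}(P)$ is a stable set (by the condition on components of $W(P)$ meeting a vertex with $|L|=3$), so e.g.\ the possibility $a \in \tilde{W}(P)$ is ruled out since $a$ has the $\tilde{W}(P)$-neighbor $b$. \textbf{Fact (b):} if $s=a-b-d$ is an $ij$-typed seagull of $P'$ and $s'=a'-b'-d' \in F$, then since $c(a')=i,\ c(b')=k,\ c(d')=j$ and $V(s') \subseteq S(P')$, the constraints $L(P')(a)=\{i,k\}$, $L(P')(d)=\{j,k\}$, and $|L(P')(b)|=3$ force every cross-edge between $V(s)$ and $V(s')$ except possibly $a \sim d'$ and $d \sim a'$ to be absent.

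For part~1, observe $V(s)\cap V(F)=\emptyset$, so if $F\cup\{s\}$ is not a flock then some $s'\in F$ has an edge to $V(s)$. By Fact~(b) this edge is $a\sim d'$ or $d\sim a'$. If both hold, $s$ is related to $s'$ and we are done. If only $a\sim d'$ holds, then using the non-edges from Fact~(b) one checks that $a'-b'-d'-a-b-d$ is an induced $P_6$; by Fact~(a) it lies in $G|(\tilde{B}(P)_{ik} \cup \tilde{B}(P)_{jk} \cup \tilde{W}(P))$, contradicting the hypothesis. The case that only $d\sim a'$ holds is symmetric.

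For part~2, write $s=a-b-d$. Since $s_1$ and $s_2$ are each related to $s$, we have $x_\ell\sim d$ and $z_\ell\sim a$ for $\ell=1,2$, with no further edges between $V(s_\ell)$ and $V(s)$. Using the given anticompleteness between $y_\ell$ and $\{x_{3-\ell},z_{3-\ell}\}$, the fact that $y_1\not\sim y_2$ (stability of $\tilde{W}(P)$), and the stability condition applied at $a$ (so $N(a)\cap B(P)_{jk}$ is stable, giving $z_1\not\sim z_2$) and at $d$ (so $N(d)\cap B(P)_{ik}$ is stable, giving $x_1\not\sim x_2$), the only candidate cross-edges between $V(s_1)$ and $V(s_2)$ are $x_1\sim z_2$ and $x_2\sim z_1$. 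It therefore suffices to show that both of these edges are present. If neither is present, then $z_1-y_1-x_1-d-x_2-y_2$ is an induced $P_6$ in the forbidden subgraph; if only $x_1\sim z_2$ is absent (but $x_2\sim z_1$ is present) then $x_1-y_1-z_1-a-z_2-y_2$ is such an induced $P_6$, and the symmetric case is analogous. Any of these contradicts $P_6$-freeness, so both edges are present and $s_1$ is related to $s_2$.

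The main obstacle, in my view, is the bookkeeping in Fact~(a): being careful to verify that all vertices in the $P_6$'s produced actually live in $\tilde{B}(P)_{ik} \cup \tilde{B}(P)_{jk} \cup \tilde{W}(P)$, so that the hypothesized $P_6$-freeness can be invoked. Once this is in place, the rest of the argument is a short sequence of path-construction arguments, each of which becomes routine after the list-forced non-edges between $V(s)$, $V(s_1)$, $V(s_2)$, and $V(s')$ have been enumerated using Fact~(b) and the stability conditions.
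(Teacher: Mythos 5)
Your proposal is correct and follows essentially the same route as the paper's proof: the list/seed argument (your Fact~(b)) pinning the only possible cross-edges to the two ``opposite-wing'' pairs, the stability conditions giving $y_1\not\sim y_2$, $x_1\not\sim x_2$, $z_1\not\sim z_2$, and then induced $P_6$'s inside $G|(\tilde{B}(P)_{ik}\cup\tilde{B}(P)_{jk}\cup\tilde{W}(P))$ forcing the missing relatedness edges. Your explicit Fact~(a) is exactly the membership bookkeeping the paper handles tersely via ``since $P$ is stable, $x_1,z_1,x_2,z_2\in\tilde{B}(P)$'' (just note that the stability condition on components of $W(P)$ rules out the wings lying anywhere in $W(P)$, not only in $\tilde{W}(P)$), and your choice of six-vertex paths in part~2 differs only cosmetically from the paper's $y_1-x_1-c_1-x_2-y_2-z_2$.
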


\begin{proof}
  Let $F=\{a_1-b_1-c_1, \ldots, a_m-b_m-c_m\}$.
  Let $s=x-y-z$ be an $ij$-typed seagull of $P'$.
We may assume that $F \cup \{s\}$ is not a flock.
Since every seagull of $F$ is an $ij$-colored seagull in $c$, and  $s$
is an $ij$-typed seagull of $P'$,
and $V(F) \subseteq S(P')$, it follows that
for every $l \in \{1, \ldots, m\}$,
  the only possible edges between
$\{a_l,b_l,c_l\}$ and $\{x,y,z\}$ are $a_lz$ and $c_lx$.
  By symmetry we may assume that for some $l \in \{1, \ldots, m\}$
  $x$ is adjacent to $c_l$.
  Since $a_l-b_l-c_l-x-y-z$ is not a
  $P_6$ in  $G|(\tilde{B}(P)_{ik} \cup \tilde{B}(P)_{jk} \cup \tilde{W}(P))$, it
  follows that
  $a_l$ is adjacent to $z$, and thus $s$ is related to
  $a_l-b_l-c_l$. This proves the first assertion of \ref{keylemma}.

  We now prove the second assertion. Assume that $s_1=x_1-y_1-z_1$ and
  $s_2=x_2-y_2-z_2$ are both $ij$-typed seagulls of $P'$ that are related
  to $a_1-b_1-c_1$, and $y_1$ is anticomplete to
  $\{x_2,z_2\}$, and $y_2$ is anticomplete to $\{x_1,z_1\}$.
  Then $b_1,y_1,y_2 \in \tilde{W}(P)$. Since $P$ is stable,
  it follows that $x_1,z_1,x_2,z_2, a_1,c_1 \in \tilde{B}(P)$.
  This implies that $y_1$ is not adjacent to $y_2$.
  The fact that $c(c_1)=j$ implies  that $c_1$ is complete to
  $\{x_1,x_2\}$ and anticomplete to $\{z_1,z_2\}$.
Since $P$ is stable   and $c_1$ is complete
  to $\{x_1,x_2\}$, it follows that $x_1$ is non-adjacent to $x_2$,
  and similarly
  and $z_1$ is non-adjacent to $z_2$.  Since
  $y_1-x_1-c_1-x_2-y_2-z_2$ is not a $P_6$, it follows that
  $x_1$ is adjacent to $z_2$, and similarly $x_2$ is adjacent to $z_1$.
  This proves the second assertion of \ref{keylemma} and
  and completes the proof.
  \end{proof}

\begin{theorem}
  \label{heterolemma}
  Let $r>0$ be an integer, $G$ a $(P_6+rP_3)$-free graph with no clique of size four,  $P=(G,L,S)$
  a stable $r$-seeded precoloring, and let $c$ be a coloring of $(G,L)$.
Assume that $G|(\tilde{B}(P)_{ik} \cup \tilde{B}(P)_{jk} \cup \tilde{W}(P)$ is $P_6$-free. 
  Then for every integer $M$ and 
  every $\{i,j,k\}= \{1,2,3\}$ either
  \begin{packed_enum}
  \item $width_{i,j}(c) \geq  M$, or
  \item There exists a $(P,i,j,c)$-key $(X_1,X_2)$  of order less than $M$.
\end{packed_enum}
  \end{theorem}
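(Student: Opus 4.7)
The plan is to prove the contrapositive: assuming alternative (1) fails, construct a $(P,i,j,c)$-key of order less than $M$ directly. Since the requirements on the key are almost entirely local to each seagull of $X_1$, the construction is essentially forced by the definition and the proof is a straightforward unwinding rather than a deep combinatorial argument.

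First I would assume $width_{i,j}(c) < M$ and let $X_1$ be any maximal flock of $ij$-colored seagulls in $P$; such a flock exists because $G$ is finite, so we can greedily extend the empty flock. Since $X_1$ is itself a flock of $ij$-colored seagulls, $|X_1| \leq width_{i,j}(c) < M$, which will give the required order bound. Let $P' = (G,L',S')$ be the precoloring obtained from $P$ by moving $V(X_1)$ to the seed with $c$, and write $X_1 = \{s_1,\ldots,s_m\}$ with $s_l = x_l - y_l - z_l$.

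Next I would build $X_2$ by inspecting each $s_l$ independently. For each $l \in \{1,\ldots,m\}$, let $\mathcal{R}_l$ be the set of $ij$-typed seagulls $t$ of $P'$ that are related to $s_l$ and have at least one wing $u$ with $c(u)=k$. If $\mathcal{R}_l \neq \emptyset$, pick any $t_l \in \mathcal{R}_l$ and set $S_l = \{t_l\}$; otherwise set $S_l = \emptyset$. Define $X_2 = \bigcup_{l=1}^m S_l$. The four bullets in the definition of a $(P,i,j,c)$-key then hold by construction: $X_1$ is a maximal flock of $ij$-colored seagulls by choice; each $S_l$ is trivially a flock of size at most one whose member, when present, is an $ij$-typed seagull of $P'$ related to $s_l$; each seagull in $X_2$ has a wing of color $k$ by the definition of $\mathcal{R}_l$; and the cases $S_l = \emptyset$ are precisely those where no $ij$-typed seagull of $P'$ related to $s_l$ has a wing of color $k$. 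The order of the key is $|X_1| < M$.

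The main point requiring care, and really the only one that could be called an obstacle, is keeping track of which precoloring we are working in: the seagulls of $X_2$ must be seagulls of $P'$ (after $V(X_1)$ has been moved to the seed), not of $P$, so that the color condition $c(u)=k$ on a wing is compatible with the updated lists in $L'$. Note that neither the $P_6$-freeness hypothesis on $G|(\tilde{B}(P)_{ik} \cup \tilde{B}(P)_{jk} \cup \tilde{W}(P))$ nor the conclusions of \ref{keylemma} are needed to produce the key itself — they are invoked only in the downstream arguments that exploit the structural consequences of a key.
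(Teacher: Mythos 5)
Your proposal is correct and follows essentially the same route as the paper: take a maximal flock $X_1$ of $ij$-colored seagulls (of size less than $M$ since alternative (1) fails), move $V(X_1)$ to the seed to form $P'$, and for each seagull of $X_1$ select, if it exists, a related $ij$-typed seagull of $P'$ with a wing colored $k$ to form $X_2$, so that the key conditions hold by construction. Your observation that neither the $P_6$-freeness hypothesis nor \ref{keylemma} is needed here matches the paper's proof as well.
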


\begin{proof}
  We may assume that \ref{heterolemma}.1 does not hold. Let $X_1$
  be a maximal flock of $ij$-colored seagulls; then
  $|X_1|=m < M$. Write $X_1=\{a_1-b_1-c_1, \ldots, a_m-b_m-c_m\}$.
  Note that for every $v \in V(X_1) \cap \tilde{W}(P)$ we have $c(v)=k$.
  Let $P'$ be the precoloring obtained from $P$ by moving $V(X_1)$ to the
  seed with $c$.
 For every $l$, let $S_l$ contain an $ij$-typed seagull $x-w-y$ of $P'$ that is related
 to $s_l$ and  that has $c(x)=k$ or $c(y)=k$. If no such $x-w-y$ exists,
 let $S_l= \emptyset$. Let $X_2=\bigcup_{l=1}^mS_l$.
 Clearly $X=X_1 \cup X_2$ is a $(P,i,j,c)$-key.
\end{proof}

Given an $r$-seeded precoloring $P$,
distinct $i,j \in \{1,2,3\}$ and  a precoloring
extension $c$ of $P$,
we say that a {\em heterogeneous $M$-characteristic}
of $P,i,j,c$ (denoted by
$char_{h,M}(P,i,j,c)$) is a flock of size $M$ of
$ij$-colored seagulls  if
\ref{heterolemma}.1 holds for $P,i,j$  and $c$,  and a $(P,i,j,c)$-key
$(X_1,X_2)$ of order $<M$ if
\ref{heterolemma}.2
holds for $P,i,j$ and $c$.  We denote by $V(char_{h,M}(P,i,j,c))$
the set of all the vertices involved in $char_{h,M}(P,i,j,c)$.

Next we generalize the notion of an $r$-seeded precoloring in order
to be able to use \ref{Mono}.
For a graph $G$, the pair $(P,\mathcal{X})$ is 
an {\em augmented $r$-seeded precoloring} of $G$ if
$P$ is an $r$-seeded precoloring of an induced subgraph $G'$ of $G$ and
$\mathcal{X}$ is a set
of subsets of $V(G)$ where $|\mathcal{X}|$ is polynomial.
A precoloring extension of $P$ is a {\em coloring} of $(P, \mathcal{X})$
if every $X \in \mathcal{X}$ is monochromatic in $c$.
We say that $(P, \mathcal{X})$ is {\em tractable} if
$|L(P)(v)| \leq 2$ for  every $v \in V(G')$.

For an $r$-seeded precoloring $P$, a collection  $\mathcal{L}$ of augmented
$r$-seeded precolorings is {\em equivalent} to $P$ if $P$ has a
precoloring extension if and only if some member of $\mathcal{L}$ has a
coloring, and given a coloring of a member of $\mathcal{L}$, a precoloring
extension of $P$ can be constructed in polynomial time.

We can now prove the main result of this section. The strategy of the proof is
similar to \ref{generaltonice}, but it is technically more involved since
we need to consider more characteristics.
\begin{theorem}
  \label{stableto2SAT}
   There exists a function $g: \mathbb{N} \rightarrow \mathbb{N}$
   with the following properties.
   Let $r>0$ be an integer, $G$ a $(P_6+rP_3)$-free graph with no clique of
   size four, and $P=(G,L,S)$ a stable $r$-seeded precoloring of $G$.
    Then there exists an equivalent collection
  $\mathcal{L}$ of tractable augmented $r$-seeded precolorings such that
  $|\mathcal{L}| \leq |V(G)|^{g(|S|)}$. 
  Moreover, $\mathcal{L}$ can be constructed in time $O(|V(G)|^{g(|S|)})$.
  \end{theorem}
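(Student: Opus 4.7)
\medskip
\noindent\textbf{Proof proposal for \ref{stableto2SAT}.}
The plan is to follow the template of \ref{generaltonice}: enumerate (by brute force) a small bounded-size witness for every relevant characteristic of a hypothetical precoloring extension $c$, move those witness vertices to the seed with the colors $c$ assigns to them, and show that after this ``guessing'' round the resulting precoloring is tractable (possibly after encoding a few monochromatic constraints in $\mathcal{X}$). Fix $M = M(|S|,r)$ large enough so that a flock of size $M$, combined with the stability conditions and the few additional vertices that can appear in a $P_6$, will yield a $P_6+rP_3$; a choice of the form $M = 2^{2|S|}(r+c_0)$ suffices.

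First I would enumerate, simultaneously, one candidate for each of the following characteristics: $\mathrm{char}_{R,M}(P,i,j,c)$ for every ordered pair $i\ne j$ (as in \ref{RLemma}), $\mathrm{char}_{S,M}(P,i,c)$ for every $i$ (as in \ref{SLemma}), $\mathrm{char}_{M}(P,T_1,T_2,c)$ for every valid ordered pair of types $T_1,T_2$ of $S$ (as in \ref{SLemmatypes}), and $\mathrm{char}_{h,M}(P,i,j,c)$ for every ordered pair $i\ne j$ (as in \ref{heterolemma}). Each individual candidate is either a set of $\le f(M)$ vertices or a flock/key on at most $3M$ vertices, so the number of tuples, together with all consistent color assignments on their vertex sets, is bounded by $|V(G)|^{g(|S|)}$ for an appropriate $g$. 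For every such tuple I would construct $P''$ by moving the union $U$ of the witness vertex sets to the seed with the guessed colors (using the updating convention of Section \ref{sec:tools}), then build an augmented precoloring $(P'', \mathcal{X})$, where $\mathcal{X}$ collects the ``forced-equal'' vertex pairs coming from the key in each heterogeneous characteristic (pairs of wings of related seagulls whose colors are forced to agree by \ref{keylemma}).

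The bulk of the work, and the step I expect to be the main obstacle, is proving that every $(P'', \mathcal{X})$ obtained this way is tractable: for every $w \in \tilde W(P'')$, we must have $|L(w)|\le 2$. Recalling that stability of $P$ forces $N(w)\subseteq B(P)$, $w$ isolated in $W$, and $N(w)$ disconnected, I would proceed by contradiction: suppose $|L(w)|=3$, and that colorings with $c(w)=i$, $c(w)=j$ and $c(w)=k$ are all still compatible with the seed. Using that $c(w)=i$ forces $c(N(w)\cap \tilde B_{ij})=j$ and $c(N(w)\cap \tilde B_{ik})=k$, I would examine which seagulls around $w$ are forced to be $jk$-colored, $ij$-colored or $ik$-colored. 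The small-case guesses (hitting sets) for $\mathrm{char}_{R,M}$, $\mathrm{char}_{S,M}$, $\mathrm{char}_M$ ensure that anticompleteness of $w$ to the guessed hitting sets restricts these seagulls enough to reduce $L(w)$; the big-case guesses (flocks of size $M$, or keys of order $<M$ from \ref{heterolemma}) give a large collection of pairwise anticomplete seagulls incident to or close to $w$, and by choice of $M$ enough of them are disjoint from and anticomplete to the bounded neighborhood used to realize the other two color options for $w$, so the stability property together with the disconnectedness of $N(w)$ produces a $P_6$ through $w$ disjoint from a flock of $r$ such seagulls — contradicting that $G$ is $(P_6+rP_3)$-free. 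This case analysis, carried out separately for each of the three colors of $w$, yields $|L(w)|\le 2$ and hence tractability.

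Finally, equivalence of $\mathcal{L}$ to $P$ is the same argument as in \ref{generaltonice}: one direction is automatic (a coloring of any $(P'',\mathcal X)\in\mathcal L$ restricts to a precoloring extension of $P$); for the converse, given any actual precoloring extension $d$ of $P$, setting each characteristic equal to $\mathrm{char}_{\ast}(P,\cdot,d)$ produces a tuple in the enumeration for which $d$ itself satisfies the augmented precoloring — the $\mathcal X$-monochromaticity is automatic because the pairs in $\mathcal X$ come from \ref{keylemma}, whose conclusion is forced by $d$. Bounding the running time and the size of $\mathcal L$ by $|V(G)|^{g(|S|)}$ is a direct count as in \ref{generaltonice}. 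The main obstacle, as noted, is the case analysis reducing $L(w)$, since the heterogeneous characteristic interacts nontrivially with $\mathrm{char}_R$ and $\mathrm{char}_S$ and the extremal $P_6+rP_3$ construction must thread a $P_6$ through $w$ while keeping $r$ seagulls of a flock pairwise anticomplete and anticomplete to it.
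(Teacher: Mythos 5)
There is a genuine gap, and it is exactly at the step you yourself flag as the main obstacle. You take ``tractable'' to mean that after one round of guessing, \emph{every} vertex of the graph has list of size at most two, and you plan to prove this by deriving a $P_6+rP_3$ whenever some $w\in\tilde W$ keeps a full list. That cannot work: no bounded amount of characteristic-guessing reduces the list of a full-list vertex $w$ whose neighborhood meets only one or two of the classes $B_{ij}$ (such vertices exist and are perfectly consistent with all the characteristics), and indeed the paper's proof never claims they disappear. The actual resolution -- and the whole reason augmented precolorings with the family $\mathcal{X}$ and Theorem \ref{Mono} appear -- is different: after the guessing rounds one shows (\eqref{smalltypes}, \eqref{smalltypes2}, \eqref{badvertices}, \eqref{mono}) that any surviving $w$ with $|L(w)|=3$ has neighbors in at most two of the sets $B_{ij}$ and that each nonempty $N(w)\cap B_{ij}$ is forced to be monochromatic in every precoloring extension; such $w$ are then \emph{deleted} from the graph, and the sets $N(w)\cap B_{ij}$ are recorded in $\mathcal{X}$ as monochromaticity constraints, so that a coloring of the smaller graph (all of whose lists have size $\le 2$) always extends back to the deleted vertices. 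Your use of $\mathcal{X}$ -- pairs of wings of related seagulls from the keys -- does not serve this purpose, so the surviving full-list vertices are simply unaccounted for in your argument, and the contradiction you hope to extract (a $P_6$ through $w$ plus $r$ anticomplete seagulls) is not available for them.

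A secondary but real issue is your plan to enumerate all characteristics simultaneously with respect to $P$. In the paper the guessing is staged: the $S,N$-characteristics are taken with threshold $N$ depending on the seed size of the precoloring produced by the $R$-round, the keys are keys of the updated precoloring (their definition refers to seagulls that are $ij$-typed \emph{after} $X_1$ has been moved to the seed), and the last round of characteristics is indexed by pairs of types of the enlarged seed $S(R)$, not of $S(P)$, and is only nontrivial when the relevant earlier guess was big. This staging is what makes the interaction claims \eqref{bigguessS}, \eqref{smallguessTgood} and \eqref{smalltypes} available; a one-shot enumeration relative to $P$ does not give you these statements, which are the ingredients that control the surviving full-list vertices in the first place.
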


\begin{proof}
  We start with an observation.
  
  \begin{equation}
    \label{safe}
    \longbox{Let $w \in \tilde{W}(P)$. Then $N(w) \subseteq \tilde{B}(P)$.}
      \end{equation}
  \eqref{safe}  follows immediately from the fact that $P$ is stable, and in
  particular nice.

  \bigskip

  Let $i, j \in \{1,2,3\}$ be distinct,
$f_R$  as in \ref{RLemma} and $M=r+6$.
First we enumerate $R,M$-characteristics of $P$.
  Let $smallguess(P,i,j)$ be the set of all subsets of
  $\tilde{B}(P,i)$ of size at most $f_R(M)$, and $bigguess(P,i,j)$  the set
  of all flocks of size $M$ such that every seagull of the flock
  has body in $\tilde{W}(P)$ and both its wings in $\tilde{B}(P)_{ij}$.
    Let $guess(P,i,j)=smallguess(P,i,j) \cup bigguess(P,i,j)$.
    Then $guess(P,i,j)$ is the set of all possible objects that can be an
    $R,M$-characteristic of a precoloring extension of $P$.
    We say that $X_{ij} \in guess(P,i,j)$ is {\em small} if
  $X_{ij}  \in smallguess(P,i,j)$ and that $X_{ij}$ is {\em  big} if
  $X_{ij}  \in bigguess(P,i,j)$. If $X_{ij}$ is big, we denote
by $U_{ij}$ the set of the wings of the flock, by $W_{ij}$ the the bodies of
the flock, and write
  $V_{ij}=U_{ij} \cup W_{ij}$.  If $X_{ij}$ is small, we write
  $V_{ij}=U_{ij}=X_{ij}$, and $W_{ij}=\emptyset$. 
In both cases we set $V(X_{ij})=V_{ij}$.
A precoloring $c$ of $(G|V_{ij}, L)$ is {\em $i,j$-consistent} if
  $c(v)=i$ for every $v \in U_{ij}$.

 Let $\mathcal{Q}$ be the set of all $6$-tuples $X=(X_{12},X_{21},X_{13},X_{31},X_{23},X_{32})$ such that $X_{ij} \in guess(P,i,j)$. Let
 $V(X)=\bigcup_{i \neq j \in \{1,2,3\}}V(X_{ij})$. We say that a precoloring $c$ of $(G|V(X), L)$
 is {\em consistent} if
 $c|V_{ij}$ is $i,j$-consistent for all $i,j$.
 
 Let  $X \in \mathcal{Q}$ and let $c$ be  a consistent precoloring  of $X$,
 and let $P'=(G,L',S')$ be the precoloring obtained from $P$ by moving $V(X)$ to the
 seed with $c$. We modify $L'$ further as follows. Let $i,j \in \{1,2,3\}$.

 \begin{packed_itemize}
 \item Assume first that $X_{ij}$ is small.
   If $w \in \tilde{W}(P')$ and  $|N(w) \cap \tilde{B}(P')_{ij}|>1$, 
remove $j$ from  $L'(w)$.
 \item Next assume that $X_{ij}$ is big. Let
   $X_{ij}=\{a_1-b_1-c_1, \ldots, a_{M}-b_{M}-c_{M}\}$.
   If $b \in \tilde{B}(P',i) \cap \tilde{B}(P)$ and $b$ is adjacent to $b_q$
   for more than one value of $q$, remove $i$
   from $L'(b)$.
 \end{packed_itemize}
 Let $P_{X,c}$ be the $r$-seeded precoloring thus obtained.
 We list several properties of $P_{X,c}$.

 \begin{equation}
  \label{QR}
   \longbox{ 
     Let $i,j \in \{1,2,3\}$.
     \begin{packed_itemize}
       \item $W(P_{X,c})$ is anticomplete to $V(X)$.
   \item If $b \in B(P_{X,c},i)$ then $b$ is anticomplete to $U_{i,j}$.
    \item If $b \in \tilde{B}(P_{X,c},i)$ and $X_{ij}$ is big, then $u$ has
      neighbors in at most one of the seagulls of $X_{ij}$.
   \end{packed_itemize}}
 \end{equation}
The first two statements of \eqref{QR} follow from that fact that
$V(X) \subseteq S(P_{X,c})$. By the second bullet of the construction process of
$P_{X,c}$, we deduce that
 if $X_{ij}$ is big and  $b$ has neighbors in more than  one of the seagulls of
 $X_{ij}$, then $i$ is removed from the list of $b$; thus the third statement of
 \eqref{Q} follows. This proves \eqref{QR}.

\begin{equation}
   \label{smallguessR}
\longbox{ Let $i,j \in \{1,2,3\}$.
  If $X_{ij}$  is small, then no vertex
  $w \in \tilde{W}(P_{X,c})$  has two neighbors in
  $B(P_{X,c})_{ij}$.}
   \end{equation}
Suppose that such $w$ has two  neighbors $n,n'$ in $B(P_{X,c})_{ij}$.
Then $n,n' \in \tilde{B}(P_{X,c})_{ij}$.
Since $P_{X,c}$ is obtained from
 $P$ by moving a set of vertices to the seed, it follows that
 $B(P_{X,c}) \subseteq B(P) \cup W(P)$.
 By \eqref{safe} no vertex of $W(P)$ is adjacent to $w$,
 and therefore $n,n' \in \tilde{B}(P)$.
 It follows that  in the construction process of $L(P_{X,c})$, $j$ was
 removed from the list of $w$, contrary to the fact that  $w \in \tilde{W}(P_{X,c})$.
  This proves~\eqref{smallguessR}.

 \bigskip
 
 Let $\mathcal{L}_R$ be the collection of all precolorings $P_{X,c}$ as above
 where  $X \in \mathcal{Q}$ and $c$ is a consistent precoloring of $V(X)$.

   \begin{equation}
     \label{LRsize}
     \longbox{\begin{packed_itemize}
       \item $|\mathcal{L}_R| \leq (3|V(G)|)^{6 \max(f_R(M),3M)}$.
       \item  $|S(Q)| \leq |S(P)|+6 \max(f_R(M),3M)$ for every $Q \in \mathcal{L}_R$.
       \end{packed_itemize}}
   \end{equation}
   For every $X \in \mathcal{Q}$, $V(X)=\bigcup_{i \neq j \in \{1,2,3\}}V(X_{ij})$,
   and $|V_{ij}| \leq \max(f_R(M),3M)$. Thus
   $|V(X)| \leq 6 \max(f_R(M),3M)$. It follows that
   $|S(Q)| \leq |S(P)|+6 \max(f_R(M),3M)$ for every $Q \in \mathcal{L}_R$.
   Moreover,  there are at most
   $|V(G)|^{ 6 \max(f_R(M),3M)}$ possible choices for $X$. Since there are at most
   $3^{|X|} \leq 3^{6 \max(f_R(M),3M)}$ possible colorings of $G|X$, it follows that
   $|\mathcal{L}_R| \leq (3|V(G)|)^{6 \max(f_R(M),3M)}$. This proves~\eqref{LRsize}.

 \bigskip

Let $Q \in \mathcal{L}_R$ and   $\{i,j,k\}=  \{1,2,3\}$.
Let $f_S$ be as in \ref{SLemma} and let  $N=\max(2^{|S(Q)|+1}, r+6)$.
We enumerate all $S,N$-characteristics of $Q$.
 Let $smallguess(W,k)$ be the set of all subsets of
 $\tilde{B}(Q,k)$ of size at most $f_S(N)$, and $bigguess(Q,k)$ be the set
  of all flocks of size $N$ such that every seagull of the flock
  has one wing in $\tilde{B}(Q)_{ik}$, body $w \in \tilde{W}(Q)$, and the
  other wing in $\tilde{B}(Q)_{jk}$.
    Let $guess(Q,k)=smallguess(Q,k) \cup bigguess(Q,k)$.
We say that $X_k \in guess(Q,k)$ is {\em small} if
  $X_k  \in smallguess(Q,k)$ and that $X_k$ is {\em  big} if
  $X_k  \in bigguess(Q,k)$. If $X_k$ is big, we denote
by $U_k$ the set of the wings of the flock, by $W_k$ the set of the bodies
of the flock, and write
  $V_k=U_k\cup W_k$.  If $X_k$ is small, we write
  $V_k=U_k=X_k$, and $W_k=\emptyset$.  In both cases $V(X_k)=V_k$.
 A precoloring $c$ of $(G|V_k, L(Q))$ is {\em $k$-consistent} if
  $c(v)=k$ for every $v \in U_k$.

 Let $\mathcal{S}(Q)$ be the set of all triples
 $X=(X_1,X_2,X_3)$ such that $X_k \in guess(Q,k)$. Let
 $V(X)=V(X_1) \cup V(X_2) \cup V(X_3)$. We say that a precoloring $c$ of $(G|V(X), L(Q))$
 is {\em consistent} if
 $c|V_i$ is $i$-consistent for all $i$.

 Let  $X \in \mathcal{S}(Q)$ and let $c$ be a  consistent precoloring of
 $(G|V(X),L(Q))$.
 Denote by $P'=(G,L',S')$  the precoloring obtained from $Q$ by moving $V(X)$ to the
 seed with $c$. We modify $L'$ further. For every 
 $i, j \in \{1,2,3\}$ with $i \neq j$, proceed as follows.

 \begin{packed_itemize}
 \item Assume first that both $X_i$  and $X_j$ are small.
   If $w \in \tilde{W}(P')$ has a neighbor in all three of the sets
   $\tilde{B}(P')_{ij}, \tilde{B}(P')_{ik}, \tilde{B}(P')_{jk}$, remove $k$ from $L'(w)$.
 \item Next assume that $X_i$ is big. Let
   $X_i=\{a_1-b_1-c_1, \ldots, a_{N}-b_{N}-c_{N}\}$.
      If $b \in \tilde{B}(P',i) \cap \tilde{B}(Q)$ and $b$ is adjacent to $b_q$
   for more than one value of $q$, remove $i$
   from $L'(b)$.
\item Next assume that $X_j$ is big. Let
  $X_j=\{a_1-b_1-c_1, \ldots, a_{N}-b_{N}-c_{N}\}$.
     If $b \in \tilde{B}(P',j) \cap \tilde{B}(Q)$ and $b$ is adjacent to $b_q$
   for more than one value of $q$, remove $j$
   from $L'(b)$.
 \end{packed_itemize}
 Let $Q_{X,c}$ be the $r$-seeded precoloring thus obtained.
 We list several properties of $Q_{X,c}$.
\begin{equation}
  \label{QS}
   \longbox{ 
     Let $i,j \in \{1,2,3\}$.
     \begin{packed_itemize}
       \item $W(Q_{X,c})$ is anticomplete to $V(X)$.
   \item If $b \in B(Q_{X,c},i)$, then $b$ is anticomplete to $U_i$.
    \item If $b \in \tilde{B}(Q_{X,c},i)$ and $X_i$ is big, then $u$ has
      neighbors in at most one of the seagulls of $X_i$.
  \end{packed_itemize}}
 \end{equation}
The first two statements of \eqref{QS} follow from that fact that
$V(X) \subseteq S(Q_{X,c})$. We now prove the third bullet.
Let $b \in \tilde{B}(Q_{X,c},i)$ and suppose that $b$ has neighbors in more
that one of the seagulls of $X_i$.
Since $Q_{X,c}$ is obtained from $Q$ by moving vertices to the seed,
it follows that $B(Q_{X,c}) \subseteq B(Q) \cup W(Q)$, and 
thus $b \in B(Q) \cup W(Q)$. Moreover, $N(b) \cap V(X_i) \subseteq W_i$.
Since $W_i \subseteq \tilde{W}(Q)$, it follows that no two vertices of $W_i$
belong to the same component of
$W(Q)$, and thus $b \not \in W(Q)$.
Consequently, $b \in \tilde{B}(Q)$.
By the second bullet of the construction process of
$Q_{X,c}$, we deduce that
 if $b$ has neighbors in more than  one of the seagulls of
 $X_i$, then $i$ is removed from the list of $b$; thus the third statement of
 \eqref{QS} follows. This proves \eqref{QS}.

\begin{equation}
   \label{doublesmallguessS}
\longbox{ Let $i,j \in \{1,2,3\}$ be distinct.
  If $X_i$  and $X_j$ are both  small, then $|L(Q_{X,c})(w)|<3$
  for every   $w \in W(Q_{X,c})$ with a neighbor in
all three of the sets $B(Q_{X,c})_{ij},B(Q_{X,c})_{ik}$,
  $B(Q_{X,c})_{jk}$.}
     \end{equation}
\eqref{doublesmallguessS} follows immediately from the first
bullet of the description of the modification of $L'$.

\begin{equation}
   \label{onesmallguessS}
\longbox{ Let $i \in \{1,2,3\}$.
  If $X_i$  is big, then $G|(\tilde{B}(Q_{X,c},i) \cup \tilde{W}(Q_{X,c}))$ is $P_6$-free.}
     \end{equation}
Suppose that $R$ is a $P_6$ in $G|(\tilde{B}(Q_{X,c},i) \cup \tilde{W}(Q_{X,c}))$.
By \eqref{QS} every vertex of $R$ has neighbors in at most one seagull
of $X_i$. Consequently at least $N-6 \geq r$ seagulls of $X_i$
are anticomplete to $R$, contrary to the the fact that $G$ is
$(P_6+rP_3)$-free. This proves~\eqref{onesmallguessS}.

\begin{equation}
\label{bigguessS}
\longbox{Write $Q=P_{Y,d}$ and let $\{i,j,k\}=\{1,2,3\}$.
  If $X_i$ is big, then $Y_{ij}$ and $Y_{ik}$ are small.}
\end{equation}
Suppose that \eqref{bigguessS} is false; by symmetry we may assume that
$Y_{ij}$ is big. Since $X_i$ is big, there is a type $T$ of $S(Q)$
with $L(T)=j$ such that at least ${N \over {2^{|S(Q)|}}} \geq 2$ of the seagulls of
$X_i$ have a wing in $\tilde{B}(Q,T)$. Let $a_1-b_1-c_1$
and $a_2-b_2-c_2$ be such seagulls, where $a_1,a_2 \in \tilde{B}(Q,T)$.
 Let $s \in T$. Then 
$R=b_1-a_1-s-a_2-b_2-c_2$ is a $P_6$, and by \eqref{QR}
every vertex of $R$ has neighbors in at most one of the seagulls of
$Y_{ij}$. Therefore $V(R)$ is anticomplete to at least
$M-6 \geq r$ of the seagulls of $Y_{ij}$, contrary to the fact that
$G$ is $(P_6+rP_3)$-free. This proves \eqref{bigguessS}.

\bigskip

Let $\mathcal{L}_S(Q)$ be the list of all precolorings $Q_{X,c}$ where
$X \in \mathcal{S}(Q)$ and $c$ is a consistent precoloring of
$(G|V(X),L(Q))$. We claim the following:

   \begin{equation}
     \label{LS(Q)size}
     \longbox{
       \begin{packed_itemize}
           \item    $|\mathcal{L}_S(Q)| \leq (3|V(G)|)^{3 \max(f_S(N),3N)}$.
           \item $S(S) \leq |S(Q)|+3 \max(f_S(N),3N)$ for every $S \in \mathcal{L}_S(Q).$
      \end{packed_itemize}       }
   \end{equation}
   For every $X \in \mathcal{S}$, $V(X)=V_1 \cup V_2 \cup V_3$,
   and $|V_i| \leq \max(f_S(N),3N)$. Thus
   $|V(X)| \leq 3 \max(f_S(N),3N)$.
It follows that $S(Z) \leq |S(Q)|+3 \max(f_S(N),3N)$ for every $Z \in \mathcal{L}_S(Q).$
   Moreover, there are at most
   $|V(G)|^{ 3 \max(f_S(N),3N)}$ possible choices for $X$. Since there are at most
   $3^{|X|} \leq 3^{3 \max(f_S(N),3N)}$ possible colorings of $G|V(X)$, it follows
   that
   $|\mathcal{L}_S(Q)| \leq (3|V(G)|)^{3 \max(f_R(N),3N)}$. This
   proves~\eqref{LS(Q)size}.

   \bigskip

 Let $S \in \mathcal{L}_S(Q)$. 
 Let $T=r+6$ and let  $\{i,j,k\}=\{1,2,3\}$.
 Our next step is to enumerate all heterogeneous $T$-characteristics of $S$.
 A {\em potential $(S,i,j)$-key} is a pair $(X_1,X_2)$ such that
 \begin{packed_itemize}
\item $X_1$ is a flock of $ij$-typed seagulls of $S$ with $|X_1| < T$.
Write $X_1=\{x_1-y_1-z_1, \ldots, x_m-y_m-z_m\}$
  where $x_1, \ldots, x_m \in \tilde{B}(S)_{ik}$,
  $y_1, \ldots, y_m \in \tilde{W}(S)$, and
  $z_1, \ldots, z_m \in \tilde{B}(S)_{jk}$.
\item
 Let $c$ be a coloring of $G|V(X_1)$ such that for every
  $l \in \{1, \ldots, m\}$,
  $c(x_l)=i$, $c(z_l)=j$ and $c(y_l)=k$.
  Let $P'=(G,L',S')$ be obtained from $S$ by moving $V(X_1)$ to the seed
  with $c$.
  For every $l \in \{1, \ldots, m\}$
  let $S_l$ be a  flock of size at most one, such that
  if $S_l \neq \emptyset$, then the member of $S_l$ is an $ij$-typed seagull
  of $P'$ related to $x_l-y_l-z_l$. Let $X_2=\bigcup_{l=1}^m S_l$. 
\end{packed_itemize}

 Let $smallguess(S,i,j)$ be the set of all potential $(S,i,j)$-keys
and $bigguess(S,i,j)$ be the set
  of all flocks of size $T$ such that every seagull of the flock
is an $ij$-typed seagull of $S$.
    Let $guess(S,i,j)=smallguess(S,i,j) \cup bigguess(S,i,j)$.
We say that $X_{ij} \in guess(S,i,j)$ is {\em small} if
  $X_{ij}  \in smallguess(S,i,j)$ and that $X_{ij}$ is {\em  big} if
  $X_{ij}  \in bigguess(S,i,j)$. If $X_{ij}$ is big, we denote
  by $U_{ij}^i$ the set of the wings of the flock that are contained in
  $\tilde{B}(S)_{ik}$, by $U_{ij}^j$ the set of the wings of the flock that are
  contained in $\tilde{B}(S)_{jk}$, and 
  by $W_{ij}$ the set of the bodies of the flock, and write
  $V_{ij}=U_{ij}^i \cup U_{ij}^j  \cup W_{ij}$.  Next assume that
  $X_{ij}=(X_1^{ij},X_2^{ij})$ is small.
Denote by $U_{ij}^i$ the set of the wings of $X_1^{ij}$ that are contained in
  $\tilde{B}(S)_{ik}$, by $U_{ij}^j$ the set of the wings of $X_1^{ij}$ that are
  contained in $\tilde{B}(S)_{jk}$, and 
  by $W_{ij}$ the set of the bodies of $X_1^{ij}$.
  We write $V_{ij}=U_{ij}^i \cup U_{ij}^j \cup W_{ij} \cup V(X_2^{ij})$.
  In both cases $V(X_{ij})=V_{ij}$.
  A precoloring $c$ of $(G|V_{ij}, L(S))$ is {\em $ij$-consistent} if
  \begin{packed_itemize}
  \item  $c(v)=i$ for every $v \in U_{ij}^i$,
  $c(v)=j$ for every $v \in U_{ij}^j$,
   $c(v)=k$ for every $v \in W_{ij}$, and
    \item if $X_{ij}$ is small, then every seagull of $X_2^{ij}$ has at least
      one  wing $v$ with $c(v)=k$.
  \end{packed_itemize}
  Let $\mathcal{T}(S)$ be the set of all triples $X=(X_{12},X_{13},X_{23})$
  such that $X_{ij} \in guess(S,i,j)$. Let
  $V(X)=V(X_{12}) \cup V(X_{13}) \cup V(X_{23})$.
  We say that a precoloring $c$ of $(G|V(X), L)$
 is {\em consistent} if
 $c|V_{ij}$ is $ij$-consistent for all $i<j$.

 Let $X \in \mathcal{T}(S)$ and let $c$ be a  consistent precoloring of
 $G|V(X)$.
 Let $P'=(G,L',S')$ be the precoloring obtained from $S$ by moving $V(X)$ to the
 seed with $c$. If $X_{ij}$ is small, we
 modify $L'$ further, as follows. Let $i,j \in \{1,2,3\}$.
 Write $X_1^{ij}=\{x_1-y_1-z_1,\ldots, x_m-y_m-z_m\}$ and
 $X_2^{ij}=\{S_1^{ij}, \ldots, S_m^{ij}\}$ as in the definition of a
 potential key.
  \begin{packed_itemize}
 \item    Let $l \in \{1, \ldots, m\}$. If $S_l^{ij}=\emptyset$,
   and $p-q-r$ is an $ij$-typed seagull of $P'$ related to $x_l-y_l-z_l$,
   remove $k$ from $L'(p)$ and from $L'(r)$.
 \item If $p-q-r$ is an $ij$-typed seagull of $P'$ and $X_1^{ij} \cup \{p-q-r\}$ is
   a flock, remove $k$ form $L'(q)$.
  \end{packed_itemize}
 Let $S_{X,c}$ be the $r$-seeded precoloring thus obtained.
We list several properties of $S_{X,c}$.
\begin{equation}
  \label{QT}
   \longbox{ 
     Let $\{i,j,k\} \in \{1,2,3\}$.
     \begin{packed_itemize}
       \item $W(S_{X,c})$ is anticomplete to $V(X)$.
       \item If $b \in B(S_{X,c},i)$ then $b$ is anticomplete to every
         $v \in V(X)$ with $c(v)=i$.
       \item $B(S_{X,c},k)$ is anticomplete to  $W_{ij}$.
  \end{packed_itemize}}
 \end{equation}
All three statements of \eqref{QT} follow from that fact that
$V(X) \subseteq S(S_{X,c})$.

\begin{equation}
   \label{smallguessT}
   \longbox{ Let $\{i,j,k\} =\{1,2,3\}$ and assume that
     $X_{ij}$ is small.
     Write $X_1^{ij}=\{x_1-y_1-z_1,\ldots, x_m-y_m-z_m\}$ as in the definition of
     a  potential key.
  \begin{packed_enum}
  \item    Let $l \in \{1, \ldots, m\}$. If $S_l^{ij}=\emptyset$,
    then no $ij$-typed seagull of $S_{X,c}$ is related to $x_l-y_l-z_l$.
  \item  There is no  $ij$-typed seagull $p-q-r$  of $S_{X,c}$ such that
    $X_1^{ij} \cup \{s\}$ is a flock.
  \end{packed_enum}}
     \end{equation}
\eqref{smallguessT} follows immediately from the the description
of the modification of $L'$, and the fact that, by
\ref{convention}, an $ij$-typed seagull
as in \eqref{smallguessT}.2  is also an $ij$-typed seagull of $P'$ (with the
notation as in the description of the modification of $L'$).

\bigskip

Recall that there exists an $r$-seeded precoloring $Q$ such that
  $S \in \mathcal{L}_S(Q)$
and $Q \in \mathcal{L}_R(P)$. Let $Z$ and $d$ be such that
$Z$ and $d$ be such that   $S=Q_{Z,d}$.
\begin{equation}
   \label{smallguessTgood}
   \longbox{ Let $\{i,j,k\} =\{1,2,3\}$ and assume that
     $X_{ij}$ is small and $Z_k$ is big. Then there is no $ij$-typed seagull in
     $S_{X,c}$.}
\end{equation}
Since $Z_k$ is big, \eqref{onesmallguessS} implies that
$G|(\tilde{B}(S,k) \cup \tilde{W}(S))$ is
  $P_6$-free.
Suppose $p-q-r$ is $ij$-typed seagull of $S_{X,c}$.
Then  $p-q-r$ is an $ij$-typed seagull of $P'$ (with the
notation as in the description of the modification of $L'$).
Then $k \in L(S_{X,c})(p) \cap L(S_{X,c})(r)$.
It follows from ~\eqref{smallguessT}.2 that
$X_1^{ij} \cup \{p-q-r\}$ is not a flock, and so by
\ref{keylemma} $p-q-r$ is related to some seagull of $X_1^{ij}$,
say to $a_1-b_1-c_1$. 
We claim that $S_1=\emptyset$.
Suppose not; write $S_1=\{s_1\}$. Then, by \eqref{QT} and  \ref{keylemma},
$p-q-r$ is related to $s_1$. But
$V(s_1) \subseteq S(S_{X,c})$, and at least one wing
of $s_1$ has color $k$, a contradiction.
This proves that $S_1=\emptyset$, and 
we get a contradiction to  \eqref{smallguessT}.1
This proves~\eqref{smallguessTgood}.

\bigskip

Let $\mathcal{L}_T(S)$ be the list of all precolorings $S_{X,c}$ where
   $X \in \mathcal{T}(S)$ and $c$ is a consistent precoloring of $V(X)$.

   \begin{equation}
     \label{LT(S)size}
\longbox{
\begin{packed_itemize}
\item $ |\mathcal{L}_T(S)| \leq (3|V(G)|)^{18T}.$
\item $|S(T)| \leq |S(S)|+18T$ for every $T \in \mathcal{L}_T(S)$.
  \end{packed_itemize}}
   \end{equation}
   For every $X \in \mathcal{T}$, $X$ consists of three parts, each of which
   is a set of at most $2T$ seagulls.
   Therefore
   $|V(X)| \leq 18T$. It follows that $|S(T)| \leq |S(S)|+18T$ for every $T \in \mathcal{L}_T(S)$. Moreover, 
   there are at most
   $|V(G)|^{ 18T}$ possible choices for $X$. Since there are at most
   $3^{|X|} \leq 3^{18T}$ possible colorings of $G|V(X)$, it follows that
   $|\mathcal{L}_T(S)| \leq (3|V(G)|)^{18T}$. This
   proves~\eqref{LT(S)size}.

   \bigskip

   Let $R \in \mathcal{L}_T(S)$ and let  $\{i,j,k\}=\{1,2,3\}$.
Let $B=r+2$.
   Let $f$ be as in \ref{SLemmatypes}.
   Finally we enumerate all possible $B$-characteristics of pairs of
   types of $R$.
 For every pair of types $T_i,T_j$ of $S(R)$ with $L(R)(T_i)=i$ and
   $L(R)(T_j)=j$, we define the following sets.
 Assume first that  $Z_k$ is small (here $Z_k$ is in the same notation as
 in \eqref{smallguessTgood}). Let
  $smallguess(T_i,T_j)=\{\emptyset\}$ and $bigguess(T_i,T_j)=\emptyset$.
Next assume that $Z_k$ is big.
   Let  $smallguess(T_i,T_j)$ be the set of all subsets of
   $\tilde{B}(R,T_i) \cup \tilde{B}(R,T_j)$ of size at most $f(B)$, and $bigguess(T_i,T_j)$ be
   the set
  of all flocks of size $B$ such that every seagull of the flock
  has a wing in $\tilde{B}(R,T_i)$, a wing in $\tilde{B}(R,T_j)$ and body in
  $w \in \tilde{W}(R)$.

  In all cases, let
  $guess(T_i,T_j)=smallguess(T_i,T_j) \cup bigguess(T_i,T_j)$.
  Let $\mathcal{T}$ be the set of all types of $S(R)$, say
  $|\mathcal{T}|=t$.
  Now let $\mathcal{C}(R)$ be the set of all vectors
  $(X_{T_i,T_j})$ where $T_i,T_j  \in \mathcal{T}$
with $L(R)(T_i)=i$ and
   $L(R)(T_j)=j$, and
$X_{T_i,T_j} \in guess(T_i,T_j)$. Then $X$ has at most $t^2$ components.

We say that $X_{T_i,T_j}$ is {\em small} if
  $X_{T_i,T_j} \in smallguess(T_i,T_j)$ and that $X_{T_i,T_j}$ is {\em  big} if
  $X_{T_i,T_j} \in bigguess(T_i,T_j)$. If $X_{T_i,T_j}$ is big, we denote
by $U_{T_i,T_j}$ the set of the wings of the flock, and   by $W_{T_i,T_j}$ the
set of the bodies of the flock,
  and write   $V_{T_i,T_j}=U_{T_i,T_j} \cup W_{T_i,T_j}$.
  If $X_{T_i,T_j}$ is small, we write
  $V_{T_i,T_j}=U_{T_i,T_j}=X_{T_i,T_j}$ and $W_{T_i,T_j}=\emptyset$.
  Finally, let
  $V(X)=\bigcup_{T_i,T_j}V_{T_i,T_j}$.

 A precoloring $c$ of $G|V(X)$ is {\em consistent} if
  $c(v)=k$ for every $v \in U_{T_i,T_j}$.
  Let $X \in \mathcal{C}(R)$ and let $c$ be a  consistent precoloring of
  $G|V(X)$.
  We construct the $r$-seeded precoloring $R_{X,c}$ as
  follows.
  Let $P'=(G,L',S')$ be obtained from $R$ by moving $V(X)$ to the seed with
  $c$.   Next we modify $L'$ further.
Please note that we are still dealing with types of
 $R$, and not with types of $P'$. 
 \begin{packed_itemize}
 \item Let $\{i,j,k\}=\{1,2,3\}$,  let $T_i,T_j,T_k$ be such that
   $L(T_l)=l$, and and assume that
   $X_{T_i,T_j}$ and $X_{T_i,T_k}$ are both small.
   If $w \in \tilde{W}(P')$ is both in a seagull with wings in $\tilde{B}(R,T_i)$ and $\tilde{B}(R,T_j)$,
   and in a seagull with wings in $\tilde{B}(R,T_i)$ and $\tilde{B}(R,T_k)$,
   remove $i$ from $L'(w)$.
 \item Let $\{i,j,k\}=\{1,2,3\}$,  let $T_i,T_j,T_k$ be such that $L(T_l)=l$,
   and assume that
   $X_{T_i,T_j}$ is small. If $w \in \tilde{W}(P')$ is in a seagull with wings in $\tilde{B}(R,T_i)$ and $\tilde{B}(R,T_j)$, and 
   $N(w) \cap \tilde{B}(R,T_i)$ is complete to $N(w) \cap \tilde{B}(R,T_k)$,
   then   remove $i$ from $L'(w)$.
  \end{packed_itemize}
Denote the precoloring thus obtained by $R_{X,c}$.

 \begin{equation}
   \label{badvertices}
\longbox{   Let $w \in \tilde{W}(R_{X,c})$. Then either
\begin{packed_enum}      
\item  $w$ only has neighbors in at most two of $B(R_{X,c})_{ij}$,
  or
\item   there exist $T_1,T_2,T_2',T_3$ such that 
   $L(R)(T_1),L(R)(T_2),L(R)(T_3)$ are all distinct,
   $L(R)(T_2)=L(R)(T_2')$,
      and $w$ is both in a seagull with wings in $\tilde{B}(R_{X,c},T_1)$ and
   $\tilde{B}(R_{X,c},T_2)$,
      and in a seagull with wings in $\tilde{B}(R_{X,c},T_2')$ and $\tilde{B}(R_{X,c},T_3)$.
      Moreover, if $T_2$ and $T_2'$ can be chosen with $T_2=T_2'$, then
      $X(T_1,T_2)$ and $X(T_2,T_3)$ are both big.
\end{packed_enum}}
 \end{equation}
 Let $\{i,j,k\}=\{1,2,3\}$ and
 let $N_{k}=N(w) \cap B(R_{X,c})_{ij}$. We may assume that all three of the sets
 $N_1,N_2,N_3$ are non-empty, for otherwise \eqref{badvertices}.1 holds.
 Since $w \in \tilde{W}(R_{X,c})$, it follows that  $w \in \tilde{W}(P)$.
  By \eqref{safe} $N(w) \subseteq \tilde{B}(P)$.
 Since $P$ is stable, we deduce that $N(w)$ is not connected.
 Consequently, $N_1$ is not complete to $N_2 \cup N_3$,
 $N_2$ is not complete to $N_1 \cup N_3$, and $N_3$ is not complete to
 $N_1 \cup N_2$.
If follows that  there exist types $T_1,T_2,T_2',T_3$ of $R$ such that 
   $L(R)(T_1),L(R)(T_2),L(R)(T_3)$ are all distinct,
   $L(R)(T_2)=L(R)(T_2')$,
   and $w$ is both in a seagull with wings in $\tilde{B}(R_{X,c},T_1)$ and
   $\tilde{B}(R_{X,c},T_2)$,
  and in a seagull with wings in $\tilde{B}(R_{X,c},T_2')$ and $\tilde{B}(R_{X,c},T_3)$.
Now suppose  $T_2$ and $T_2'$ can be chosen with $T_2=T_2'$.
  By the first bullet of the construction process of $L(R_{X,c})$ we may assume
  that $X(T_1,T_2)$ is big.  We may also assume that
  $X(T_2,T_3)$ is small, for otherwise \eqref{badvertices}.2 holds.
  Now by the second bullet point of the construction of $L(R_{X,c})$,
  we deduce that $w$ is also in a seagull with wings in
  $\tilde{B}(R_{X,c},T_1)$ and $\tilde{B}(R_{X,c},T_3)$. But now
  $X(T_1,T_3)$ is big (by the first  bullet point of the construction of
  $L(R_{X,c})$), and again  \eqref{badvertices}.2 holds.
    This proves \eqref{badvertices}.

 \bigskip
 
  Write $Q=P_{Y,d}$, $S=Q_{Z,f}$ and $R=S_{U,g}$
  \begin{equation}
    \label{smalltypes}
    \longbox{
  $|L(R_{X,c})(w)|<3$ for every $w \in W(R_{X,c})$ 
    with neighbors in all three 
    of the sets $B(R_{X,c})_{ij},B(R_{X,c})_{ik}, B(R_{X,c})_{jk}$
    (here $\{i,j,k\}=\{1,2,3\}$).}
  \end{equation}
  We may assume that there exists $w \in \tilde{W}(R_{X,c})$ with 
     neighbors in all three 
      of the sets $B(R_{X,c})_{ij}$, $B(R_{X,c})_{ik}$, $B(R_{X,c})_{jk}$.
  By \eqref{doublesmallguessS} there exist
  distinct $i,j \in \{1,2,3\}$ such that both $Z_i$ and $Z_j$ are big.
  It follows from \eqref{bigguessS}
  that $Y_{12}$, $Y_{13}$ and $Y_{23}$ are all small,
  and so every $w' \in \tilde{W}(Q)$ 
 has at most one neighbor in $B(Q)_{ij}$ for every
 $i,j \in \{1,2,3\}$. Using \eqref{safe} we deduce  that
  every $w' \in \tilde{W}(R_{X,c})$ 
 has at most one neighbor in $B(R_{X,c})_{ij}$ for every
 $i,j \in \{1,2,3\}$. It now follows from \eqref{safe} that
$deg(w)>2$, and so
 \eqref{badvertices}.2 holds for $w$ and $T_2=T_2'$. With the notation
 of \eqref{badvertices}.2, 
 we may assume that $L(T_i)=i$; consequently $w$ is both in
 a $23$-seagull, and in a $12$-seagull. 
Using symmetry, we may assume that $Z_1$ is big.
 Now by \eqref{smallguessTgood}, it follows that $U_{23}$
 is big. Let $U_{23}' \subseteq U_{23}$ be a flock of size exactly $r$.
 Let $X_{T_1,T_2}=\{p_1-q_1-r_1, \ldots, p_B-q_B-r_B\}$, where
 $p_1, \ldots, p_B \in \tilde{B}(R,T_1)$
 and $r_1, \ldots, r_B \in \tilde{B}(R,T_2)$ (recall that $X_{T_1,T_2}$ is big
 by \eqref{badvertices}). Since each of the bodies of the seagulls
 of $U_{23}'$ has at most one neighbor in
 $\tilde{B}(Q)_{23}$, it follows that the set of   bodies of $U_{23}'$
 is anticomplete to at least $B-r=2$ of $p_1, \ldots, p_B$.
 We may assume that the set of bodies of $U_{23}'$ is anticomplete
 to $\{p_1,p_2\}$.
 Let $s \in T_1$. Then
 $M=q_2-p_2-s-p_1-q_1-r_1$  and $M'=q_1-p_1-s-p_2-q_2-r_2$ are copies of $P_6$
 in $G$.
If every vertex of $M$ has neighbors in at most one seagull
of $Z_1$, then $V(M)$ is anticomplete to at least
$N-6 \geq r$ seagulls of $Z_1$, contrary to the fact that
$G$ is $(P_6+rP_3)$-free. By \eqref{QS} we may assume
that $p_1$ has neighbors in at least two seagulls of $Z_1$,
say $a_1-b_1-c_1$ and $a_2-b_2-c_2$. 
It follows that $q_1,b_1,b_2 \in \tilde{W}(P)$. Now  \eqref{safe}
implies that $p_1,a_1,c_1,a_2,c_2 \in \tilde{B}(P)$, and
since $P$ is stable, 
$p_1$ is not complete to either of $\{a_1,c_1\}$, $\{a_2,c_2\}$.
Also, since $L(R_{X,c})(a_1)=L(R_{X,c})(c_1)=L(R_{X,c})(a_2)=L(R_{X,c})(c_2)=\{1\}$, it follows
that $L(R_{X,c})(b_1) \neq 1$ and $L(R_{X,c})(b_2) \neq \{1\}$,
and since $p_1 \in \tilde{B}(R_{X,c})_{23}$, we have that $p_1$
is anticomplete to $\{b_1,b_2\}$.
We deduce that $G|\{p_1,a_1,b_1,c_1,a_2,b_2,c_2\}$ contains a $P_6$, say $K$.
Recall that each of $b_1,b_2$ has at most one neighbor in each of
$B(Q)_{12}$ and $B(Q)_{13}$ (namely $a_1,c_1$ and $a_2,c_2$, respectively). Since
$V(Z_1) \subseteq S(Q_{Z,f})$, it follows that $\{a_1,b_1,c_1, a_2,b_2,c_2\}$
is anticomplete to $V(U_{23})$.
Now, since $U_{23}'$ are all $23$-typed seagulls of $S=Q_{Z,f}$ with bodies
anticomplete to $p_1$,
and since $p_1 \in \tilde{B}(R_{X,c})_{23} \subseteq \tilde{B}(S)_{23} \cup \tilde{W}(S)$, it follows that
$\{p_1,a_1,b_1,c_1,a_2,b_2,c_2\}$ is anticomplete to $V(U_{23}')$.
Since $U_{23}'$ is a flock of size  $r$, this contradicts the
fact that $G$ is $(P_6+rP_3)$-free. 
This proves~\eqref{smalltypes}.

\begin{equation}
  \label{smalltypes2}
  \longbox{
  If $w \in \tilde{W}(R_{X,c})$  has neighbors in
  $B(R_{X,c})_{ij}$ and in $B(R_{X,c})_{ik}$, then for every pair of types
  $T_1, T_2$ with $L(R)(T_1)=k$ and $L(R)(T_2)=j$, we have that 
  $X(T_1,T_2)$ is small.}
\end{equation}
By \eqref{smalltypes} we may assume that $w$ is anticomplete to
$\tilde{B}(R_{X,c})_{jk}$.
  Suppose that there exist $T_1,T_2$ as above with $X(T_1,T_2)$ big.
It follows that $Z_i$ is big. But now by \eqref{bigguessS}
both $Y_{ij}$ and $Y_{ik}$ are small, and so by \eqref{smallguessR}
$deg(w)<2$. By \eqref{safe} we get a contradiction to the fact that
$P$ is stable. This proves~\eqref{smalltypes2}.

\bigskip

Now we construct an augmented $r$-seeded precoloring $M_{X,c}$ as follows.
If $|L(R_{X,c})(w)|=2$ for every $w \in W(R_{X,c})$,
let $M_{X,c}=(R_{X,c},\emptyset)$. Now we may assume that
$\tilde{W}(R_{X,c}) \neq \emptyset$.
Let $W_l$ be the set of $w \in \tilde{W}(R_{X,c})$
with neighbors in exactly $l$ of the sets  $B(R_{X,c})_{ij}$.
If $W_3 \neq \emptyset$, set
$L'(v)=\emptyset$ for every $v \in V(G) \setminus S(R_{X,c})$,
and let $M_{X,c}=((G,L', S(R_{X,c})), \emptyset)$.
Now assume that $W_3=\emptyset$. Let
$G'=G \setminus (W_1 \cup W_2)$, and let $\mathcal{X}$ be the set
of all the non-empty sets $N(w) \cap B(R_{X,c})_{ij}$ with $w \in W_2$.
Let $M_{X,c}=((G',L(R_{X,c}), S(R_{X,c}), \mathcal{X})$     .

Let $\mathcal{M}(R)$ be the set of all the augmented $r$-seeded
precolorings $M_{X,c}$ where $X \in \mathcal{C}$ and $c$ is a
consistent precoloring of $X$.

\begin{equation}
  \label{M(R)size}
\longbox{$|\mathcal{M}(R)|$ is polynomial.}
\end{equation}
The  number of possible  pairs $(T_i,T_j)$ where
$T_i,T_j \in \mathcal{T}$ and $L(R)(T_i) \neq L(R)(T_j)$
is at most  $t^2 \leq 2^{2|S(R)|}$. By
\eqref{LRsize}, \eqref{LS(Q)size}, \eqref{LT(S)size},
there is a constant $D$
that depends on $r$ but not on $G$, such that
$|S(R)| \leq D$.  Since $|V_{T_i,T_j}| \leq max (3B,f(B))$
for every $T_i,T_j$, it follows that for every 
$X \in \mathcal{C}$, $|V(X)| \leq 2^{2D} max (3B,f(B))$.
Let $K= 2^{2D} max (3B,f(B))$. Then there are at most
$|V(G)|^K$ choices for the members of $\mathcal{C}$,
and so
 $|\mathcal{C}| \leq |V(G)|^K$. Moreover, for every
 $X \in \mathcal{C}$, the number of colorings of $G|V(X)$ is at most
$3^{|V(X)|} \leq 3^K$.
 Since $|\mathcal{M}(R)|$ is at most the total number of pairs
 $(X,c)$  where  $X \in \mathcal{C}$ and $c$ is a coloring
 of $G|V(X)$, we deduce that
 $|\mathcal{M}(R)| \leq (3|V(G)|)^K$.
 This proves~\eqref{M(R)size}.

\bigskip

Finally, let
$\mathcal{L}=\bigcup_{Q \in \mathcal{L}_R} \bigcup_{S \in \mathcal{L}_S(Q)}\bigcup_{R \in \mathcal{L}_T(S)}\mathcal{M}(R).$

It follows from \eqref{LRsize}, \eqref{LS(Q)size}, \eqref{LT(S)size} and
\eqref{M(R)size} that $|\mathcal{L}|$ is polynomial.
Moreover, for every $(M, \mathcal{X}) \in \mathcal{L}$
$|L(M)(v)| \leq 2$ for every $v \in V(G(M))$, and $|\mathcal{X}| \leq 2|V(G)|$;
thus $(M, \mathcal{X})$ is tractable.
It remains to show that $\mathcal{L}$ is equivalent to $P$.
Suppose $(M,\mathcal{X}) \in \mathcal{L}$ has a precoloring
extension $d$. 
We observe that $M$ is an $r$-seeded precoloring $(G',L',S')$,
where $G'$ is an induced subgraph of $G$, and $L'(v) \subseteq L(v)$
for every $v \in V(G')$. Thus a precoloring extension $c$ of $M$
is also a precoloring extension of $(G',L,S)$. Next we
observe that if $v \in V(G) \setminus V(G')$, then $|L(w)|=3$, and
either
\begin{packed_itemize}
\item there exist $i,j \in \{1,2,3\}$ such that
  $L'(n)=\{i,j\}$ for every $n \in N(v)$, or
\item there exist $X_1, X_2 \in \mathcal{X}$
  such that $N(v)=X_1 \cup X_2$.
\end{packed_itemize}
In both cases  we have $L(v) \setminus c(N(v)) \neq \emptyset$,
and so $c$ can be extended to a precoloring extension of $P$.

Now we  show the converse.
Let $c$ be a precoloring extension of $P$. We will construct
$(M, \mathcal{X}) \in \mathcal{L}$ that has a precoloring extension.
For every $i,j \in \{1,2,3\}$,
let $X_{ij}=char_{R,M}(P,i,j,c)$, and let $X=(X_{12},X_{21},X_{13},X_{31},X_{23},X_{32})$.
Then $c$ is a consistent coloring of $V(X)$, and so 
$Q=Q_{X,c} \in \mathcal{L}_R$.  We claim that $c$ is a precoloring extension of
$Q$. Suppose that $c(v) \not \in L(Q)(v)$ for some $v \in V(G)$.
Let $P'$ be the precoloring obtained from $P$ by moving
$V(X)$ to the seed with $c|V(X)$. Then there exist $i,j \in \{1,2,3\}$ such
that 
either 
\begin{packed_itemize}
\item $X_{ij}$ is small,
  $v \in \tilde{W}(P')$,  $|N(v) \cap B(P')_{ij}|>1$,
and $c(v)=j$, or
\item $X_{ij}$ is big,
  $v\in \tilde{B}(P',i) \cap \tilde{B}(P)$, and $v$ is adjacent
  to the bodies of at least two seagulls of $X_{ij}$,
  and $c(v)=i$.
\end{packed_itemize}
In the former case \ref{RLemma}.1 implies that $v$ has a neighbor in
$B(P')_{ij}$ of color $j$ in $c$, and in the latter case \ref{RLemma}.2
immediately implies that $c(v) \neq i$, in both cases a contradiction. This
proves that $c$  is a precoloring extension of $Q$.

Let $Y_i=char_{S,N}(Q,i,c)$, and let $Y=(Y_1,Y_2,Y_3)$.
Then $c$ is a consistent coloring of $V(Y)$, and so 
$S=Q_{Y,c} \in \mathcal{L}_S(Q)$.  We claim that $c$ is a precoloring extension
of $S$. Suppose that $c(v) \not \in L(S)(v)$ for some $v \in V(G)$.
Let $Q'$ be the precoloring obtained from $Q$ by moving
$V(Y)$ to the seed with $c|V(Y)$. Then there exist $\{i,j,k\}=\{1,2,3\}$
such that
\begin{packed_itemize}
 \item $X_i$  and $X_j$ are small,
   $v \in \tilde{W}(Q')$ has a neighbor in each of the sets
   $\tilde{B}(Q')_{ij}, \tilde{B}(Q')_{ik},\tilde{B}(Q')_{jk}$, and
   $c(v)=k$, or    
 \item $X_i$ is big, 
   $v \in \tilde{B}(Q',i) \cap \tilde{B}(Q)$ and $v$ is adjacent to the body of at least two
   seagulls of $X_i$, and $c(v)=i$, or
 \item $X_j$ is big,
   $v \in \tilde{B}(Q',j) \cap \tilde{B}(Q)$ and $v$ is adjacent to the body of at least two
   seagulls of $X_j$, and $c(v)=j$.
\end{packed_itemize}   

We deal with the first bullet first. In the case of the first bullet
\ref{SLemma}.1 implies that 
\begin{packed_enum}
\item $v$ does not have neighbors $n \in B(Q')_{ij}$ and $n' \in B(Q')_{ik}$
  with $c(n)=c(n')=i$, and
\item $v$ does not have neighbors $m \in B(Q')_{ij}$ and $m' \in B(Q')_{jk}$
  with $c(n)=c(n')=j$.
\end{packed_enum}  
We claim that $v$ has a neighbor 
$n'' \in B(Q')$ with $c(n'')=k$. Suppose not.
Then $c(N(v) \cap B(Q')_{ik})=i$,
and therefore  $c(N(v) \cap B(Q')_{ij})=j$. But  also
$c(N(v) \cap B(Q')_{jk})=j$,
and therefore  $c(N(v) \cap B(Q')_{ij})=i$, a contradiction. This proves that the first bullet above  does not happen. If the case of the
second bullet \ref{SLemma}.2 immediately implies that $c(v) \neq i$,
and in the case of the third bullet \ref{SLemma}.2 immediately implies that $c(v) \neq j$. Thus we get a contradiction in all cases. This proves that
$c$ is a precoloring extension of $S$.

Let $i,j \in \{1,2,3\}$. Let $Z_{ij}=char_{h,T}(S,i,j,c)$.
Let $Z=(Z_{12},Z_{13},Z_{23})$. Then $c$ is a consistent precoloring of
$V(Z)$. Let $R=S_{Z,c}$, then $R \in \mathcal{L}_T(S)$.
We claim that $c$ is a precoloring extension
of $R$. Suppose that $c(v) \not \in L(R)(v)$ for some $v \in V(G)$.
Let $S'$ be the precoloring obtained from $S$ by moving
$V(X)$ to the seed with $c|V(X)$. Then there exists $\{i,j,k\}=\{1,2,3\}$
such that $Z_{ij}$ is small,
$Z_1^{ij}=\{x_1-y_1-z_1,\ldots, x_m-y_m-z_m\}$ 
 and, with the notation of the definition
of an $(S,i,j,c)$-key,  either
  \begin{packed_itemize}
 \item    there exists $l \in \{1, \ldots, m\}$ with $S_l=\emptyset$,
   $v-q-r$ is an $ij$-typed seagull of $S'$ related to $x_l-y_l-z_l$,
   and $c(v)=k$, or
 \item $p-v-r$ is an $ij$-typed seagull of $S'$, $Z_1^{ij} \cup \{p-v-r\}$ is
   a flock, and $c(v)=k$.
  \end{packed_itemize}
  Suppose first that the first case happens. Since $S_l=\emptyset$, it follows
  from the definition of a key that no seagull related to $x_l-y_l-z_l$ has a
  wing colored $k$ in $c$, a contradiction. Thus the second bullet holds,
  and we get a contradiction to the fact that $Z_1^{ij}$ is a maximal flock of $ij$-colored seagulls. This proves that $c$ is a precoloring extension of $R$.

For every pair of types $T_i,T_j$ of $S(R)$ with $L(R)(T_i)=i$ and
$L(R)(T_j)=j$, let  $U(T_i,T_j)=char_B(R,T_1,T_2,c)$.
Note that, by \ref{SLemma}.1, if $Y_k$ is small,
then no vertex of $\tilde{W}(R)$ has neighbors $n \in B(R,T_i)$
and $n' \in B(R,T_j)$ with $c(n)=c(n')=k$, and therefore
$U(T_i,T_j)=\emptyset$.
Let $U=(U(T_i,T_j))$ (so $U$ is a vector indexed by pairs $T_i,T_j$).
Then $c$ is a consistent precoloring of $V(U)$.
Let $D=R_{U,c}$. 
Let $R'$ be the seeded precoloring obtained from $R$ by moving
$V(U)$ to the seed with $c$. We claim that $c$ is a precoloring
extension of $D$. 

Suppose $c(v) \not \in L(D)(v)$ for some
$v \in V(G)$. Then there exist
types $T_i,T_j,T_k$ of $R$  where 
$\{i,j,k\}=\{1,2,3\}$ and $L(R)(T_l)=l$ for every $l \in \{1,2,3\}$, and such
that
  (please note that we are still dealing with types of
 $R$, and not with types of $R'$)
 \begin{packed_itemize}
 \item  $U(T_i,T_j)$ and $U(T_i,T_k)$ are both small,
   $v \in \tilde{W}(R)$ is in both in a seagull with wings in $B(R,T_i)$ and $B(R,T_j)$
and in a seagull with wings in $B(R,T_i)$ and $B(R,T_j)$,
and $c(v)=i$.
\item   $X_{T_i,T_j}$ is small, and
   $N(w) \cap \tilde{B}(R,T_i)$ is complete to $N(w) \cap \tilde{B}(R,T_k)$,
   and  $v \in \tilde{W}(R)$ is in a seagull with wings in $\tilde{B}(R,T_i)$ and $\tilde{B}(R,T_j)$, and $c(v)=i$.
  \end{packed_itemize}
Now \ref{SLemmatypes}.1 implies that 
\begin{packed_enum}
\item $v$ does not have neighbors $n \in B(R',T_i)$ and $n' \in B(R',T_j)$
  with $c(n)=c(n')=k$, and
\item $v$ does not have neighbors $m \in B(R',T_i)$ and $m' \in B(R',T_k)$
  with $c(m)=c(m')=j$.
\end{packed_enum}  
We claim that $v$ has a neighbor in
$n'' \in B(R')$ with $c(n'')=i$. Suppose not.
Then $c(N(v) \cap B(R',T_j))=k$, 
and therefore  $c(N(v) \cap B(R',T_i))=j$.
Also, $c(N(v) \cap B(R',T_k))=j$.
and therefore  $c(N(v) \cap B(R',T_i))=k$, a contradiction.
This proves that $c$ is a precoloring extension of  $D$.

Finally, we construct an augmented seeded precoloring $M_{U,c}$.
If $|L(D)(w)|=2$ for every $w \in W(D)$,
let $M_{U,c}=(D,\emptyset)$. Then $c$ is a coloring of $M_{U,c}$ and
$M_{U,c} \in \mathcal{M}(R)$, as required.
Thus we may assume that there exists $w \in \tilde{W}(D)$.
It follows from \eqref{smalltypes}, \eqref{smalltypes2} and
\ref{SLemmatypes}.1 that for every $\{i,j,k\}=\{1,2,3\}$,
and  every pair $T_i,T_j$ with $L(T_i)=i$ and $L(T_j)=j$, 
$w$ does not have neighbors $n \in B(R',T_i)$ and $n' \in B(R',T_j)$
with $c(n)=c(n')=k$.
For $l \in \{1,2,3\}$ let $W_l$ be the set of vertices $w \in \tilde{W}(D)$
 with neighbors in exactly $l$ of the sets
$B(D)_{ij}$.

\begin{equation}
  \label{mono}
  \longbox{Let $w \in W_2 \cup W_3$. We claim that
each of the  sets $N(w) \cap B(D)_{ij}$  is monochromatic in $c$.}
\end{equation}
Suppose not; we may assume that $1,2 \in c(N(w) \cap B(D)_{12})$.
It follows that $c(N(w) \cap (B(D)_{13} \cup B(D)_{23}))=3$,
and so  all three colors appear in $N(w)$, contrary to the fact that $c$ is
a precoloring extension of $D$.  This proves \eqref{mono}.

\bigskip

Suppose first that $W_3 \neq \emptyset$.
Then the three values $c(N(w) \cap B(D)_{ij})$ are all distinct,
again contrary to the fact that $c$ is a precoloring extension of $D$.
Thus we may assume that $W_3=\emptyset$.
Let $G'=G \setminus (W_1 \cup W_2)$, and let $\mathcal{X}$ be the set
off all the non-empty sets $N(w) \cap B(D)_{ij}$ with $w \in W_2$.
Let $M_{U,c}=((G',L(D), S(D)), \mathcal{X})$.
Then $M_{U,c} \in \mathcal{M}(D)$, and
by \eqref{mono} $c$ is a precoloring extension of $M_{U,c}$, as required.
This proves~\ref{stableto2SAT}.
\end{proof}

\section{The complete algorithm} \label{sec:complete}

We can now prove \ref{main} which we restate.
\begin{theorem} 
\label{main1}
The \textsc{list-3-coloring problem} can be
  solved in polynomial time for the class of $(P_6+rP_3)$-free graphs.
\end{theorem}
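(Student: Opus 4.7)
I would prove Theorem~\ref{main1} by induction on $r$. The base case is $r=0$: a $(P_6+0\cdot P_3)$-free graph is $P_6$-free, hence $P_7$-free, so Theorem~\ref{3colP7} applies directly. For the inductive step, I would first test in polynomial time whether $G$ contains $K_4$ (if so, $(G,L)$ is not colorable) and whether $G$ contains an induced $P_6+(r-1)P_3$. If $G$ has no such induced subgraph, then $G$ is $(P_6+(r-1)P_3)$-free and the inductive hypothesis finishes the argument.

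Otherwise, the plan is to enumerate all $O(|V(G)|^{3r+3})$ induced copies of $P_6+(r-1)P_3$ and all $3^{3r+3}$ colorings of the vertex set $S$ of each copy; note $|S|=3r+3$ is a constant. For each pair (copy, coloring $f$), I would obtain $L'$ from $L$ by setting $L'(v)=\{f(v)\}$ for $v\in S$ and updating exhaustively, discarding the branch if some list becomes empty. Otherwise $P_0=(G,L',S)$ is an $r$-seeded precoloring of $(G,L)$, and $(G,L)$ is colorable iff some branch yields a $P_0$ with a precoloring extension. Before calling the structural reductions, I would verify by brute force that every $X\subseteq V(G)$ with $|X|\leq 4r+8$ admits a coloring of $(G|X,L')$, which is polynomial since both the number of such $X$ and the number of list colorings on them are bounded; this check is a necessary condition for colorability and is also needed as a precondition of Theorem~\ref{nicetostable}.

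Next I would chain the three structural reductions proved in the previous sections. Applying Theorem~\ref{generaltonice} to $P_0$ produces a polynomial-size equivalent collection $\mathcal{L}_1$ of nice or easy $r$-seeded precolorings. For each easy $P'\in\mathcal{L}_1$, the graph $G|(B(P')\cup W(P'))$ is $P_6$-free, hence $P_7$-free, and since $X^0(L(P'))$ is already consistently colored, the extension problem reduces to list-3-coloring this subgraph, which is solved by Theorem~\ref{3colP7}. For each nice $P'\in\mathcal{L}_1$, I would apply Theorem~\ref{nicetostable} to obtain a polynomial-size equivalent collection $\mathcal{L}_2(P')$ of stable precolorings with the same seed, then apply Theorem~\ref{stableto2SAT} to each stable $P''\in\mathcal{L}_2(P')$ to get a polynomial-size equivalent collection $\mathcal{L}_3(P'')$ of tractable augmented $r$-seeded precolorings. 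Each such tractable augmented precoloring $(M,\mathcal{X})$ has all lists of size at most two and $|\mathcal{X}|$ polynomial, so Theorem~\ref{Mono} decides its colorability in polynomial time and produces a coloring when one exists.

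The main obstacle is the bookkeeping to confirm that the whole pipeline runs in polynomial time. Because $r$ is fixed, $|S(P_0)|=3r+3$ is a constant; Theorem~\ref{nicetostable} preserves the seed, and Theorem~\ref{generaltonice} enlarges it only by $g_1$ of a constant, so every evaluation of $g_1(|S|)$ and $g(|S|)$ in the chain produces a constant, making the overall number of generated precolorings polynomial and each individual step polynomial. Correctness is then immediate from the equivalence guarantees of Theorems~\ref{generaltonice}, \ref{nicetostable} and \ref{stableto2SAT}, and a coloring of $(G,L)$ is reconstructed from a coloring of any successful branch by unwinding the equivalences and the initial exhaustive enumeration.
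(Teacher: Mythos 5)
Your proposal is correct and follows essentially the same route as the paper: induction on $r$ with \ref{3colP7} as the base case, a brute-force check on sets of size at most $4r+8$ (which also rules out $K_4$), seeding on a copy of $P_6+(r-1)P_3$, and then the chain \ref{generaltonice} $\to$ \ref{nicetostable} $\to$ \ref{stableto2SAT}, finishing easy precolorings with \ref{3colP7} and tractable augmented ones with \ref{Mono}. The only (harmless) difference is that you enumerate all induced copies of $P_6+(r-1)P_3$, whereas the paper fixes a single copy $S$ and enumerates only its $3^{|S|}$ precolorings.
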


\begin{proof}
  The proof is by induction on $r$.  For $r=0$, the result follows from
  \ref{3colP7}, so we may assume that $r \geq 1$.
  Let $G$ be a $(P_6+rP_3)$-free graph and let
  $\tilde{L}$ be a $3$-list assignment  for $G$.
  We can test (by enumeration) if there exists $X \subseteq V(G)$
  with $|X| \leq 4r+8$ such that $(G|X,\tilde{L})$ is not
  colorable. If such $X$ exists, stop and output
  that $(G, \tilde{L})$ is not colorable.
  
  We may assume that $G$
  contains $P_6+(r-1)P_3$. Let $S \subseteq V(G)$ be such
  that  $G|S=P_6+(r-1)P_3$. For every precoloring 
  $(G, \tilde{L},S, L)$ of $(G,\tilde{L})$, $P_L=(G,L,S)$ is a
  $r$-seeded precoloring. Since $|S|=3r+6$, it follows that
  the number of such $r$-seeded precoloring is at most        
  $3^{3r+6}$. For each $P_L$ as above, let $\mathcal{L}_1(P_L)$ be
  be as in \ref{generaltonice}, and let
  $\mathcal{L}_1=\bigcup_{P_L}\mathcal{L}_1(P_L).$
  Then $|\mathcal{L}_1| \leq 3^{3r+6}|V(G)|^{g_1(3r+6)}$ and
  every member of $\mathcal{L}_1$ is nice or easy and has  seed of
  size at most $g_1(3r+6)$.

 For every $P' \in \mathcal{L}_1$ proceed as follows.
  If $P'$ is easy, set $\mathcal{L}(P')=\{P'\}$.
  Next assume that $P'$ is nice. Let $\mathcal{L}_2(P')$ be as
  in \ref{nicetostable}. Then $|\mathcal{L}_2(P')| \leq |V(G)|$,
  every member of
  $\mathcal{L}_2(P')$ is stable and has seed of size at most $g_1(3r+6)$.
  Now for every $P'' \in \mathcal{L}_2(P')$
  let $\mathcal{L}_3(P'')$ be as in \ref{stableto2SAT}. 
Then $|\mathcal{L}_3(P'')| \leq  |V(G)|^{g(g_1(3r+6))}$.
  Let $\mathcal{L}(P')=\bigcup_{P'' \in \mathcal{L}_2(P')}\mathcal{L}_3(P'').$
  Finally, let
  $\mathcal{L}=\bigcup_{P' \in \mathcal{L}_1}\mathcal{L}(P').$
  It follows that $|\mathcal{L}|$ is polynomial.
  
  It is now enough to test in polynomial time if each member of
  $\mathcal{L}$ has a precoloring extension.
  Let $Q \in \mathcal{L}$. It follows from the construction of
  $\mathcal{L}$ that $Q$ is either  a tractable augmented $r$-seeded
  precoloring, or an easy $r$-seeded precoloring.
If $Q$ is a tractable augmented $r$-seeded
precoloring, then a coloring of $Q$, or a determination that none exists,
can be found by \ref{Mono}.
  Thus we may assume that  $Q$ is an easy $r$-seeded precoloring.
  It is now enough to
  test if $(G\setminus X^0(L(Q)), L(Q))$ is colorable, and find a coloring if
  one exists. Since $V(G) \setminus X^0(L(Q)) \subseteq B(Q) \cup W(Q)$,
  this can be done by \ref{3colP7}. This proves~\ref{main1}.
  \end{proof}

\section{A hardness result} \label{sec:hardness}

A graph $G=(V,E)$ is said to be {\em k-critical} if $\chi(G)=k$ and $\chi(G-v)<k$
for any vertex $v\in V$. A $k$-critical graph $G$ is {\em nice} if $G$ contains three 
pairwise non-adjacent vertices $c_1$, $c_2$ and $c_3$ such that
$\omega(G-\{c_1,c_2,c_3\})=\omega(G)=k-1$.
For instance, any odd cycle of length at least seven with any
$3$-vertex stable contained in it 
is a nice 3-critical graph. The graph $H^*$ with its vertices $c_1$, $c_2$ and $c_3$ (see \autoref{fig:H^*})
 is a nice $4$-critical graph.
\begin{figure}[tb]
\centering
\begin{tikzpicture}[scale=0.6]
\tikzstyle{vertex}=[draw, circle, fill=white!100, minimum width=4pt,inner sep=2pt]

\node[vertex] (v1) at (0,0) {};
\node[vertex] (v2) at (1,2) {};
\node[vertex] (v3) at (-1,2) {};
\node[vertex] (c1) at (2,0) {$c_1$};
\node[vertex] (c2) at (0,4) {$c_2$};
\node[vertex] (c3) at (-2,0) {$c_3$};
\node[vertex] (u) at (0,1.5) {};
\draw (c1)--(v2)--(c2)--(v3)--(c3)--(v1)--(c1);
\draw (v1)--(v2)--(v3)--(v1);
\draw (u)--(c1);
\draw (u)--(c2);
\draw (u)--(c3);

\end{tikzpicture}
\caption{A nice $4$-critical graph $H^*$.}
\label{fig:H^*}
\end{figure}
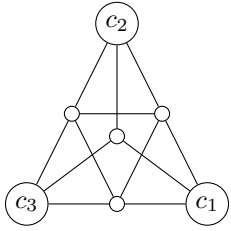

In \cite{huang}, the following generic framework of showing $NP$-completeness of the $k$-coloring problem
was proposed. Let $I$ be a \sat{3} instance with variables $x_1$, $x_2$, $\ldots$, $x_n$ and clauses
$C_1$, $C_2$, $\ldots$, $C_m$. Let $H$ be a nice $k$-critical graph. We construct a graph $G_{H,I}$
as follows.
\begin{framed}
\begin{packed_itemize}
\item For each variable $x_i$ there is a {\em variable component} $T_i$ consisting of
two adjacent vertices $x_i$ and $\overline{x_i}$. Call these vertices {\em $X$-type}.
\item For each variable $x_i$ there is a vertex $d_i$. Call these vertices {\em $D$-type}.
\item For each clause $C_j=y_{i_1}\vee y_{i_2}\vee y_{i_3}$ where $y_{i_t}$ is either $x_{i_t}$ or $\overline{x_{i_t}}$
there is a {\em clause component} $H_j$ that is isomorphic to $H$.
Denote the three specified pair-wise non-adjacent vertices in $H_j$ by
$c_{i_tj}$ for $t=1,2,3$.
Vertices $c_{i_tj}$ are referred to as {\em $C$-type} and all remaining vertices  in $H_j$ are referred to as {
\em $U$-type}.
\item Add an edge between every vertex of $U$-type and every vertex of $X$-type or $D$-type.
\item For each $C$-type vertex $c_{ij}$ we say that $x_i$ or $\overline{x_i}$ is its {\em literal vertex}
depending on whether $x_i\in C_j$ or $\overline{x_i}\in C_j$. Add an edge between $c_{ij}$ and
its literal vertex.
\item For each $C$-type vertex $c_{ij}$ add an edge between $c_{ij}$ and $d_i$.
\end{packed_itemize}
\end{framed}

\begin{lemma}[\cite{huang}]\label{lem:reduction}
A \sat{3} instance $I$ is satisfiable if and only if $G_{H,I}$ is $(k+1)$-colorable.
\end{lemma}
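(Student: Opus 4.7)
The plan is to establish both directions of the equivalence, exploiting the combinatorial structure of the nice $k$-critical graph $H$ together with the complete-join structure of the gadget.

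\emph{Forward direction.} Given a satisfying assignment $\phi$ of $I$, I would color $G_{H,I}$ with colors $\{1,\ldots,k+1\}$ as follows. Set $\text{color}(x_i)=1$ iff $\phi(x_i)$ is true (and $\overline{x_i}$ gets the opposite color), and set $d_i=1$ for all $i$. For each clause component $H_j$, niceness yields $\chi(H-\{c_1,c_2,c_3\})=k-1$, since $k$-criticality gives the upper bound $\chi\leq k-1$ while $\omega(H-\{c_1,c_2,c_3\})=k-1$ gives $\chi\geq k-1$. So I can properly color the $U$-type vertices of $H_j$ using the $k-1$ colors $\{3,\ldots,k+1\}$. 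For the three $C$-type vertices, I would split into cases by the number $0\leq f\leq 2$ of literals in $C_j$ that are false under $\phi$ (with $f\leq 2$ since $C_j$ is satisfied). Invoking $k$-criticality, $\chi(H\setminus S)\leq k-1$ for any nonempty $S\subseteq\{c_1,c_2,c_3\}$; so I $(k-1)$-color $H$ minus the $3-f$ ``true-literal'' $c$-vertices using $\{3,\ldots,k+1\}$, and then extend by assigning color $2$ to each removed $c$-vertex. Properness follows because the removed $c$-vertices are pairwise non-adjacent and $2\notin\{3,\ldots,k+1\}$; external adjacencies are respected because a true-literal $c$-vertex has its literal vertex and its $d$-vertex both colored $1$, while a false-literal $c$-vertex inherited its color from $\{3,\ldots,k+1\}$, automatically avoiding the external colors $\{1,2\}$.

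\emph{Backward direction.} Given a $(k+1)$-coloring $c$ of $G_{H,I}$, I would first show that $X$-type and $D$-type vertices together use exactly two colors, which we rename $\{1,2\}$. Indeed, each $U$-type vertex is complete to all of $X$-type $\cup$ $D$-type, so the colors used by these types are disjoint from those used by the $U$-type of any $H_j$; since the latter has chromatic number $k-1$, the $X$-type and $D$-type jointly occupy at most $k+1-(k-1)=2$ colors, and at least two since $x_i$ and $\overline{x_i}$ are adjacent. Define $\phi(x_i)$ to be true iff $\text{color}(x_i)=\text{color}(d_i)$. To verify that $\phi$ satisfies every clause $C_j$: the $U$-type of $H_j$ must use all of $\{3,\ldots,k+1\}$, yet $\chi(H)=k$ forces some $c_{i_{t^*}j}$ to take a color $\alpha\in\{1,2\}$; its literal vertex and $d_{i_{t^*}}$ both lie in $\{1,2\}\setminus\{\alpha\}$, a singleton, so they have the same color. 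A short case analysis on whether $y_{t^*}$ is $x_{i_{t^*}}$ or $\overline{x_{i_{t^*}}}$ then shows $y_{t^*}$ is true under $\phi$.

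\emph{Main obstacle.} The crux is the forward direction's per-clause construction: one must verify that a $(k-1)$-coloring of $H$ minus a chosen subset of $c$-vertices, taking values in $\{3,\ldots,k+1\}$, extends properly by assigning color $2$ to those removed vertices. This is exactly where niceness and $k$-criticality combine: niceness lets the $U$-type fit into the $k-1$ ``inner'' colors $\{3,\ldots,k+1\}$, while $k$-criticality lets us delete one or two $c$-vertices and still $(k-1)$-color what remains; the color $2$ plays the role of a bridge between the variable component and the clause gadget.
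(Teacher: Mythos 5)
Your proof is correct. Note that the paper itself does not prove this lemma---it is quoted from \cite{huang} without proof---and your argument is essentially the standard one from that source: criticality gives $\chi(H\setminus S)\le k-1$ for nonempty $S\subseteq\{c_1,c_2,c_3\}$, which drives the forward direction, while niceness (the clique of size $k-1$ in $H-\{c_1,c_2,c_3\}$ together with the complete join from $U$-type to $X$-type and $D$-type vertices) forces the $X$/$D$ vertices into two colors in the backward direction, and $\chi(H)=k$ then pushes some $C$-type vertex into those two colors, certifying a true literal. Both directions as you present them check out.
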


Now  we use the generic framework to prove \ref{P5P2} which we restate.
\begin{theorem}
\label{P5P22}
The \textsc{$k$-coloring problem} restricted to
$P_5+P_2$-free graphs is $NP$-hard for $k \geq 5$.
\end{theorem}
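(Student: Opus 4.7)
The plan is to prove $NP$-hardness for $k=5$ directly and then extend to every $k\geq 5$ by a clique-join construction. For the extension, given a $P_5+P_2$-free graph $G_0$, form $G_0'$ as the complete join of $G_0$ with a fresh clique $K$ on $k-5$ new vertices. Then $G_0'$ is $k$-colorable if and only if $G_0$ is $5$-colorable, and $G_0'$ remains $P_5+P_2$-free: since every $K$-vertex is universal to $V(G_0)$ while $P_5$ is triangle-free, any $K$-vertex used in a putative induced $P_5+P_2$ of $G_0'$ would force at least three $P_5$-vertices into the clique $K$, creating a triangle in $P_5$, which is impossible; so the induced $P_5+P_2$ would lie entirely in $V(G_0)$, contradicting the hypothesis on $G_0$.

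For $k=5$, I would invoke \ref{lem:reduction} with $H=H^*$, the nice $4$-critical graph shown in Figure~\ref{fig:H^*}. That lemma gives that a \sat{3} instance $I$ is satisfiable if and only if $G_{H^*,I}$ is $5$-colorable, and the reduction is clearly polynomial, so the theorem reduces to showing that $G_{H^*,I}$ is $P_5+P_2$-free.

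To prove $P_5+P_2$-freeness, I would rely on four structural facts about $G_{H^*,I}$: (a) $H^*$ itself is $P_5$-free (a short direct inspection of its seven vertices); (b) $C$ is a stable set (the three $c_{ij}$'s inside each $H_j$ are pairwise non-adjacent by construction, and vertices from distinct clause components share no edges); (c) every $U$-type vertex is complete to $X\cup D$; and (d) the induced subgraph $G_{H^*,I}[X\cup D\cup C]$ is $P_5$-free, because it decomposes by variable as a disjoint union of ``variable gadgets'' on $\{x_i,\overline{x_i},d_i\}\cup\{c_{ij}\}$ in which $d_i$ is universal to the $c_{ij}$'s, $x_i\overline{x_i}$ is the only edge inside $\{x_i,\overline{x_i},d_i\}$, and each $c_{ij}$ has degree two (to $d_i$ and to its literal vertex); a short case check shows each such gadget is $P_5$-free. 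Note also that $G_{H^*,I}[U\cup C]$ is a disjoint union of copies of $H^*$ (since distinct clause components share no edges), hence $P_5$-free by (a).

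Given a putative induced $P_5+P_2 = P \cup Q$ in $G_{H^*,I}$, the contradiction is now almost immediate: $V(P)$ cannot lie in $U\cup C$ (by (a) and the disjoint-$H^*$ observation) and cannot lie in $X\cup D\cup C$ (by (d)), so it must meet both $U$ and $X\cup D$. Then by (c), $Q$ can contain neither a $U$-vertex (which would be adjacent to an $X\cup D$-vertex of $P$) nor an $X\cup D$-vertex (adjacent to a $U$-vertex of $P$), forcing $Q\subseteq C$; but then (b) contradicts the fact that $Q$ is an edge. The main technical content is therefore the $P_5$-freeness checks of (a) and of the variable gadget in (d), both finite case inspections; the extension to $k>5$ and the invocation of \ref{lem:reduction} are essentially automatic, and no further structural analysis of $G_{H^*,I}$ is needed.
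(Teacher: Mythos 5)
Your proposal is correct, and for the heart of the matter --- the $(P_5+P_2)$-freeness of $G_{H^*,I}$ --- it uses exactly the same ingredients as the paper: $P_5$-freeness of the clause components, $P_5$-freeness of the graph minus the $U$-type vertices (your per-variable gadget is precisely the paper's blow-up of a $C_5$ by the stable sets $\{x_i\}$, $C_i$, $\{d_i\}$, $\overline{C_i}$, $\{\overline{x_i}\}$, which is the cleanest way to do your ``short case check''), completeness of $U$ to $X\cup D$, and stability of $C$; you merely arrange the final contradiction symmetrically (first excluding the $P_5$ from both $U\cup C$ and $X\cup D\cup C$, then forcing the $P_2$ into $C$), where the paper alternates between the two components, so this part is essentially the paper's argument. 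One genuine difference is that the paper's written proof only draws the conclusion for $k=5$, while you explicitly extend to all $k\geq 5$ by joining a clique $K$ on $k-5$ new vertices; that is a worthwhile addition, but your justification there is slightly off: a $K$-vertex playing an internal role in the $P_5$ has only two non-neighbours inside the $P_5$, so ``at least three $P_5$-vertices forced into $K$'' does not hold in that case (and in fact no non-neighbour of a $K$-vertex can be placed anywhere, since $K$ is a clique). The fix is immediate and cleaner: every vertex of $K$ is adjacent to all other vertices of $G_0'$, whereas every vertex of $P_5+P_2$ has degree at most $2<6$, so no $K$-vertex can lie in an induced $P_5+P_2$ at all, and the colorability equivalence is standard; with that repair your extension argument, and hence the whole proof, goes through.
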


\begin{proof}

  First we show:
  
  \begin{equation}
    \label{lem:p5p2free}
\longbox{Let $I$ be a \sat{3} instance and $H$ be a nice $k$-critical graph.
If $H$ is $P_5$-free, then $G_{H,I}$ is $(P_5+P_2)$-free.}
\end{equation}
Suppose that $G_{H,I}$ contains
an induced $Q=Q_1+Q_2$ where $Q_1$ and $Q_2$ are isomorphic
to a $P_5$ and a $P_2$, respectively. 
Let $C_i$ (respectively $\overline{C_i}$) be
the set of $C$-type vertices that connect to $x_i$ (respectively $\overline{x_i}$).
We observe that each connected component of $G-U$ has a specific
structure, namely it is the result of substituting stable sets into a $5$-cycle
(and possibly removing some vertices).  Specifically, the $5$ stable sets are,
in the cyclical order, $X_0=\{x_i\}$, $X_1=C_i$, $X_2=\{d_i\}$,
$X_3=\overline{C_i}$, and finally $X_4=\{\overline{x_i}\}$. This subgraph
does not contain an induced $P_5$, since the 5-cycle does not and substituting stable
sets cannot create a $P_5$. 
This implies that $Q_1\cap U\neq \emptyset$. 
Since $U$ is complete to $X\cup D$, $Q_2\subseteq U\cup C$.
Since $C$ is an stable set, this implies that $Q_2\cap U\neq \emptyset$ and thus
$Q_1\subseteq U\cup C$. This means that $Q_1$ is entirely contained in some clause component.
This, however, contradicts the assumption that $H$ is $P_5$-free.
This proves \eqref{lem:p5p2free}.

\bigskip

Now observe that the graph $H^*$ (\autoref{fig:H^*}) is $P_5$-free. It follows
then from \autoref{lem:reduction} and \eqref{lem:p5p2free} that 
\col{5} $(P_5+P_2)$-free graphs is $NP$-hard.
\end{proof}


\begin{thebibliography}{50}
\bibitem{c1} Bonomo, F., M. Chudnovsky, P. Maceli, O. Schaudt, M. Stein, and M. Zhong. \emph{Three-coloring and list three-coloring graphs without induced paths on seven vertices.} Appeared on-line in Combinatorica (2017), DOI:10.1007/s00493-017-3553-8.


\bibitem{kP3paper} Broersma, H.J., A. Golovach, D. Paulusma and J. Song.
  \emph{Updating the complexity status of coloring graphs without a fixed
    induced linear forest.} Theoretical Computer Science 414 (2012) 9--19.

\bibitem{DSW} Ding, G., P. Seymour, and P. Winkler.
  \emph{Bounding the vertex cover number of a hypergraph.}
  Combinatorica 14 (1994) 23--34.


\bibitem{edwards} Edwards, K. \emph{The complexity of colouring problems on dense graphs.} Theoretical Computer Science 43 (1986): 337--343.

\bibitem{subexp} Groenland, C., K. Okrasa, P. Rz\k{a}\.zewski, A. Scott, P. Seymour,
  and S. Spirkl. \emph{$H$-coloring $P_t$-free graphs in subexponential time.}
  arXiv:1803.05396.

\bibitem{hoang} Ho\`ang, C.T., M. Kami\'nski, V. Lozin, J. Sawada, and X. Shu. \emph{Deciding $k$-colorability of $P_5$-free graphs in polynomial time.} Algorithmica 57, no. 1 (2010): 74--81.

\bibitem{huang} Huang, S. \emph{Improved complexity results on $k$-coloring $P_t$-free graphs.} European Journal of Combinatorics 51 (2016): 336--346.

\bibitem{gps} Golovach, P.A., D. Paulusma, and J. Song. \emph{Closing complexity gaps for coloring problems on $H$-free graphs.} Information and Computation 237 (2014): 204--214.

\bibitem{PrPspaper} Klimo\v{s}ov\'a, T.,  J. Malik, T. Masa\v{r}\'ik, J. Novotn\'a, D. Paulusma, and
  V. Sl\'ivov\'a. \emph{Colouring $(P_r+P_s)$-Free Graphs.}
  arXiv:1804.11091.

  
\bibitem{Stacho} Stacho, J. Private communication.
  
\end{thebibliography}
\end{document}